 \renewcommand{\thesection}{\arabic{section}}
\newtheorem{theorem}{Theorem}[section]
\newtheorem{thmg}{Theorem}
\newtheorem{thmh}{Theorem}
\newtheorem{thmha}{Theorem}
\newtheorem{propAlpha}{Proposition}
\newtheorem{propAlphaM}{Proposition}
\newtheorem{lemma}[theorem]{Lemma}
\newtheorem{proposition}[theorem]{Proposition}
\newtheorem{corollary}[theorem]{Corollary}
\newtheorem{remark}[theorem]{Remark}
\theoremstyle{remark}
\newtheorem*{remark*}{Remark}
\renewcommand{\theequation}{\thesection .\arabic{equation}}
\let\sect\section
\renewcommand\section{\setcounter{equation}{0}
\gdef\theequation{\thesection .\arabic{equation}}\sect}
\newcommand{\norm}[1]{\left\|#1\right\|}
\renewcommand{\epsilon}{\varepsilon}
\newcommand{\T}{\mathbb{T}}
\newcommand{\R}{\mathbb{R}}
\newcommand{\Z}{\mathbb{Z}}
\newcommand{\C}{\mathbb{C}}
\newcommand{\cB}{{\mathcal{B}}}
\newcommand{\cE}{{\mathcal{E}}}
\newcommand{\cS}{{\mathcal{S}}}
\newcommand{\IR}{{\mathbb{R}}}
\newcommand{\TT}{{\mathbb{T}}}
\newcommand{\tor}{\TT}
\newcommand{\IZ}{{\mathbb{Z}}}
\newcommand{\be}{\begin{eqnarray}}
\newcommand{\ee}{\end{eqnarray}}
\newcommand{\diam}{\mathop{\rm{diam}}}
\newcommand{\dist}{\mathop{\rm{dist}}}
\newcommand{\mes}{\mathop{\rm{mes}\, }}
\newcommand{\compl}{\mathop{\rm{compl}}}
\newcommand{\spec}{\mathop{\rm{spec}}}
\newcommand{\ve}{{\varepsilon}}
\newcommand{\nn}{\nonumber}
\def\beeq{\begin{equation}}
\def\eneq{\end{equation}}
\def\les{\lesssim}
\def\cS{{\mathcal S}}
\def\bm{\begin{matrix}}
\def\endm{\end{matrix}}
\def\Re{{\rm Re}}
\newcommand{\EQ}[1]{\begin{equation}\begin{split} #1 \end{split}\end{equation}}
\begin{document}

\title{Homogeneity of the Spectrum for Quasi-Periodic Schr\"odinger Operators}

\author{David Damanik, Michael Goldstein,
Wilhelm Schlag, and Mircea Voda.}

\address{Department of Mathematics, Rice University, 6100 S. Main St. Houston TX 77005-1892, U.S.A.}
\email{damanik@rice.edu}

\address{Department of Mathematics, University of Toronto, Toronto, Ontario, Canada M5S 1A1}
\email{gold@math.toronto.edu}

\address{Department of Mathematics, The University of Chicago, 5734 S. University Ave., Chicago, IL 60637, U.S.A.}
\email{schlag@math.uchicago.edu}

\address{Department of Mathematics, The University of Chicago, 5734 S. University Ave., Chicago, IL 60637, U.S.A.}
\email{mvoda@uchicago.edu}

\begin{abstract}
We consider the one-dimensional discrete Schr\"odinger operator
$$
\bigl[H(x,\omega)\varphi\bigr](n)\equiv -\varphi(n-1)-\varphi(n+1) + V(x + n\omega)\varphi(n)\ ,
$$
$n \in \IZ$, $x,\omega \in [0, 1]$ with real-analytic potential $V(x)$. Assume $L(E,\omega)>0$ for all $E$. Let $\mathcal{S}_\omega$ be the spectrum of $H(x,\omega)$. For all $\omega$ obeying the Diophantine condition $\omega \in \mathbb{T}_{c,a}$, we show the following: if $\mathcal{S}_\omega \cap (E',E'')\neq \emptyset$, then $\mathcal{S}_\omega \cap (E',E'')$ is homogeneous in the sense of Carleson (see \cite{Car83}). Furthermore, we prove, that if $G_i$, $i=1,2$ are two gaps with $1 > |G_1| \ge |G_2|$, then $|G_2|\lesssim \exp\left(-(\log \dist (G_1,G_2))^A\right)$, $A\gg 1$. Moreover, the same estimates hold for the gaps in the spectrum on a finite interval, that is, for $\cS_{N,\omega}:=\cup_{x\in\T}\spec H_{[-N,N]}(x,\omega) $, $N \ge 1 $, where $H_{[-N, N]}(x, \omega)$ is the Schr\"odinger operator restricted to the interval $[-N,N]$ with Dirichlet boundary conditions.
In particular, all these results hold for the almost Mathieu operator with $|\lambda| \neq 1$. For the supercritical almost Mathieu operator, we combine the methods of \cite{GolSch08} with Jitomirskaya's approach from \cite{Jit99} to establish most of the results from \cite{GolSch08} with $\omega$ obeying a strong Diophantine condition.
\end{abstract}

\maketitle

\tableofcontents

\section{Introduction}\label{sec:intro}

We consider quasi-periodic Schr\"odinger equations
\begin{equation}\label{eq:1.Sch}
\bigl[H(x,\omega)\varphi\bigr](n) \equiv - \varphi(n-1) - \varphi(n+1) +  V(x + n\omega)\varphi(n) = E\varphi(n)
\end{equation}
in the regime of positive Lyapunov exponents.  We assume that $V(x)$ is a $1$-periodic, real-analytic function. Recall that for irrational $\omega$, the spectrum of $H(x,\omega)$ does not depend on $x$. We denote it by $\mathcal{S}_\omega$. It was shown in \cite{GolSch11} that $\mathcal{S}_\omega$
is a Cantor set for almost every irrational $\omega$, in the regime of positive Lyapunov exponent. The main objective of this work is to show that the structure of the gaps is ``regular.'' More specifically, {\em a closed set $\mathcal{S}\subset\mathbb{R}$ is called homogeneous if there is $\tau > 0$ such that for any $E\in \mathcal{S}$ and any $0<\sigma\le \diam(\cS)$, the estimate
\begin{equation} \label{eq:1homogeneous}
|\mathcal{S}\cap(E-\sigma,E+\sigma)| > \tau\sigma
\end{equation}
holds} (see \cite{Car83}). We say also that $\mathcal{S}$ is $\tau$-homogeneous.

\begin{thmh}\label{thm:homogeneity}
Let
\begin{equation*}
\omega\in\T_{c,a}:= \left\{ \omega: \norm{n\omega}\ge \frac{c}{n(\log n)^a}, n\ge 1 \right\}.
\end{equation*}	
If $ E_0\in \cS_\omega $ and $ L(\omega,E_0)\ge \gamma>0 $ then there exists $ \sigma_0=\sigma_0(V,c,a,\gamma) $ such that
\begin{equation}\label{eq:local-homogeneity}
	|\cS_\omega\cap(E_0-\sigma,E_0+\sigma)|\ge \sigma/2,
\end{equation}
for all $ \sigma\in(0,\sigma_0] $. In particular:
\begin{enumerate}
	\item[(a)] If $ L(\omega,E)\ge \gamma $ for all $ E\in\R  $, then $ \cS_\omega $ is $\tau$-homogeneous with some
	$\tau=\tau(V,c,a,\gamma)$.
	
	\item[(b)] If $ L(\omega,E)\ge \gamma $ for all $ E\in(E',E'') $ and there exists $\epsilon > 0$ such that
	\begin{equation}\label{eq:maximality}
		\cS_\omega\cap(E'-\epsilon,E''+\epsilon)=\cS_\omega\cap(E',E''),
	\end{equation}
	then $\mathcal{S}_\omega\cap (E', E'')$ is either empty or $\tau$-homogeneous with some $\tau=\tau(V,c,a,\gamma,\epsilon)$.
\end{enumerate}
The previous statements also hold with $\cS_{N,\omega} := \cup_{x\in\T} \spec H_{[-N,N]}(x,\omega)$, $N \ge 1$, instead of
$\cS_\omega$ $($here $H_{[-N, N]}(x, \omega)$ is the Schr\"odinger operator restricted to the interval $[-N,N]$ with Dirichlet
boundary conditions$)$.
\end{thmh}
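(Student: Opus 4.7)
\emph{Plan.} The proof combines two inputs developed earlier in the paper: the H\"older continuity of the integrated density of states $N(\omega,\cdot)$ in the regime of positive Lyapunov exponent, with exponent $\alpha=\alpha(V,c,a,\gamma)>0$, and the quantitative gap estimate
\[
|G_2|\lesssim \exp\bigl(-|\log\dist(G_1,G_2)|^A\bigr),\qquad A\gg 1,
\]
for any pair of gaps of $\cS_\omega$ with $|G_1|\ge |G_2|$. Combined with gap labeling ($N|_G\equiv k\omega\pmod{1}$ for some $k=k(G)\in\IZ$) and the Diophantine condition $\omega\in\T_{c,a}$, these yield \eqref{eq:local-homogeneity}. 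Fix $E_0\in\cS_\omega$ with $L(\omega,E_0)\ge\gamma$. Joint continuity of the Lyapunov exponent on $\{L\ge\gamma/2\}$ gives a neighborhood of $E_0$ on which the above estimates hold with constants depending only on $V,c,a,\gamma$. Since $E_0\in\cS_\omega$, any open gap $G\subset I:=(E_0-\sigma,E_0+\sigma)$ lies strictly on one side of $E_0$, so $|G|\le\sigma$; the proof reduces to bounding the total length of gaps meeting $I$ by $5\sigma/4$.

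First, I claim at most one gap $G^*\subset I$ satisfies $|G^*|>\sigma/4$. Indeed, two such gaps would lie on opposite sides of $E_0$ at mutual distance $\le\sigma$, and the pair estimate would then force the smaller to have size $\lesssim\exp(-|\log\sigma|^A)\ll\sigma/4$ once $\sigma\le\sigma_0$, a contradiction. All remaining gaps $G\ne G^*$ in $I$ then satisfy $|G|\le T(\sigma):=\exp(-|\log\sigma|^A)$, either by comparison with $G^*$ or with any fixed gap of $\cS_\omega$ of bounded size lying within $O(1)$ of $I$.

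Second, I bound $\Sigma:=\sum_{G\ne G^*,\,G\cap I\ne\emptyset}|G|$ via gap labels. H\"older continuity of $N$ confines all such labels to a window $W\subset\R/\Z$ of length $\le C\sigma^\alpha$ centered at $N(E_0)$. The Diophantine condition forces two distinct labels in $W$ to differ by at least $c'\sigma^{-\alpha}(\log(1/\sigma))^{-a}$, so among $|k|\le M$ there are at most $O\bigl(M\sigma^\alpha(\log(1/\sigma))^a\bigr)$ admissible labels. Combining with the label-decay consequence $|G_k|\lesssim\exp(-c(\log|k|)^A)$ of the pair estimate, for a suitable choice of $M=M(\sigma)$ one obtains
\[
\Sigma \;\le\; M\sigma^\alpha(\log(1/\sigma))^a\cdot T(\sigma)+\sum_{|k|>M}\exp\bigl(-c(\log|k|)^A\bigr)\;\le\;\sigma/4,
\]
provided $\sigma\le\sigma_0$. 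Adding $|G^*|\le\sigma$ gives total gap measure $\le 5\sigma/4$ in $I$, hence $|\cS_\omega\cap I|\ge 3\sigma/4\ge\sigma/2$. Assertion (a) now follows by trivially extending \eqref{eq:local-homogeneity} to $\sigma\in(\sigma_0,\diam\cS_\omega]$ with $\tau=\sigma_0/(2\diam\cS_\omega)$; assertion (b) follows because the buffer condition \eqref{eq:maximality} guarantees that for $E_0\in\cS_\omega\cap(E',E'')$ and $\sigma\le\min(\sigma_0,\epsilon)$ the endpoints $E',E''$ contribute no spurious gap inside $I$. The statement for $\cS_{N,\omega}$ follows verbatim from finite-volume analogues of the H\"older and gap estimates that are established earlier in the paper.

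\emph{Main obstacle.} The technically delicate step is the bound on $\Sigma$. Because the IDS H\"older exponent $\alpha$ can be much smaller than $1$, the window $W$ holds polynomially many (roughly $\sigma^{-(1-\alpha)}(\log(1/\sigma))^a$) admissible labels, each corresponding to a gap that might a priori have size as large as $T(\sigma)$. Only the super-polynomial smallness of $T(\sigma)=\exp(-|\log\sigma|^A)$ afforded by $A\gg 1$ beats this polynomial factor, so the strengthened form of the pair estimate is used in an essential way; proving that strengthened estimate uniformly in $\omega\in\T_{c,a}$ is the real technical work that makes the argument above rigorous.
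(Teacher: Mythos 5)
Your proposal takes a genuinely different route from the paper, but it contains a gap that the paper's argument is specifically designed to avoid.

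\textbf{Comparison with the paper's proof.} The paper does not use gap labeling, the integrated density of states, or any counting of gaps. Instead, given $E_0\in\cS_\omega$, it directly manufactures a large piece of spectrum near $E_0$ at scale $\sigma\sim\exp(-(\log N)^C)$: via \cref{lem:spectrum-criterion} and \cref{lem:initial-spectral-segment} one finds $j$ and an $x$-interval $I$ of length $>\exp(-(\log N)^C)$ on which the Dirichlet eigenvalue $E_j^{[-N,N]}(\cdot,\omega)$ takes values near $E_0$ and the eigenvector is small at the endpoints $\pm N$; \cref{cor:size-of-spectral-segment} (Wegner) shows the spectral segment $E_j^{[-N,N]}(I,\omega)$ has length $\ge\exp(-(\log N)^C)$; and \cref{lem:finite-segment-to-full-spectrum-Bourgain} shows all but an $\exp(-cN)$-measure portion of that segment lies in $\cS_\omega$. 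This gives $|\cS_\omega\cap(E_0-\sigma,E_0+\sigma)|\ge\sigma/2$ with one stroke, with no need to understand the combinatorics of gaps.

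\textbf{The gap in your argument.} Your bound on $\Sigma$ invokes a ``label-decay consequence $|G_k|\lesssim\exp(-c(\log|k|)^A)$ of the pair estimate.'' This is the analogue of estimate (i) from the small-coupling result of~\cite{DamGolLuk14}, and it does \emph{not} follow from \cref{thm:gaps-separation}. The pair estimate only controls the smaller of two gaps in terms of their mutual distance; it says nothing about the size of the gap carrying a given label $k$, which a priori can sit anywhere in the spectrum. The paper explicitly states that it establishes only a ``slightly weaker version of (i)--(iii),'' and indeed (i) is exactly what you need and do not have. Without it the tail $\sum_{|k|>M}$ in your bound on $\Sigma$ cannot be controlled: nothing prevents many gaps with large labels from accumulating inside $(E_0-\sigma,E_0+\sigma)$, each of size up to $T(\sigma)$, and the label window of length $C\sigma^\alpha$ still contains infinitely many points $\{k\omega\}$ once one allows all $k\in\IZ$. (A secondary, smaller issue: the existence of a ``fixed gap of bounded size within $O(1)$ of $I$'' against which to apply the pair estimate in case no $G^*$ exists is asserted but not justified.) In~\cite{DamGolLuk14} the label decay comes from the exponential decay of the Fourier coefficients of the potential together with a direct finite-volume analysis, which is precisely the structure unavailable in the positive Lyapunov exponent regime; the paper's proof was constructed precisely to route around this missing ingredient.
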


\noindent\textbf{Remarks.} (1)
If we introduce a coupling constant, that is, if we replace $V$ by $\lambda V$, we know by Sorets-Spencer \cite{SorSpe91} that
part (a) of our theorem applies for $ \lambda\ge\lambda_0(V) $. For part (b) we note that for energies near the edges of the interval
$ (E',E'') $ we don't know how much of the nearby spectrum afforded by \cref{eq:local-homogeneity} sits inside $ (E',E'') $. We
deal with this issue by imposing condition \cref{eq:maximality} which forces all the spectrum near $ (E',E'') $ to be in
$ (E',E'') $. In general, assuming that the Lyapunov exponent does not vanish throughout the spectrum,
the existence of the intervals $ (E',E'') $, to which part (b) of our theorem applies, follows from the continuity of the Lyapunov
exponent (see \cite{GolSch01},\cite{BJ}) and the density of gaps given by \cite{GolSch11}. Indeed, given $ E_0\in \cS_\omega $ such that
$ L(\omega,E_0)\ge \gamma>0 $, we can find an interval $ (E',E'') $ such that $ E_0\in(E',E'') $, $ L(\omega,E)\ge \gamma/2>0 $ for
$ E\in(E',E'') $, and $ E',E''\notin \cS_\omega $. The last condition insures that we have \cref{eq:maximality} with
$ \epsilon=\epsilon(\dist(\{ E',E'' \},\cS_\omega)) $.

(2) In general our theorem doesn't guarantee that $ \cS_\omega\cap \{ E\in \R:L(\omega,E)>0  \} $ is homogeneous
(unless we are in the setting of part (a)).
However, we do get that this is true for typical analytic potentials, in the sense of the Main Theorem of \cite{Avi13}.
Recall that in \cite{Avi13} it is shown that for typical analytic potentials there exist finitely many disjoint closed intervals
$ I_k $ such that $ \cS_\omega\subset \cup_k I_k $ and $ \cS_\omega\cap I_k $ is either absolutely continuous or pure point.
Furthermore, one has spectral uniformity in both subcritical and supercritical regimes. For the supercritical regime this means
that there exists $ \gamma>0 $ such that $ \cS_\omega\cap \{ E\in \R:L(\omega,E)>0  \}=\cS_\omega\cap \{ E\in \R:L(\omega,E)\ge\gamma  \} $.
One can now apply part (b)
of our theorem on each non-empty interval $ I_k\cap \{ E\in \R:L(\omega,E)\ge\gamma  \} $ to yield the homogeneity of the
spectrum in the supercritical regime.


(3) The strong Diophantine condition on $\omega$ can be relaxed. This improvement is one of the results in the ongoing work of Tao and Voda \cite{TaoVod15}. In the current work we use the existing results developed assuming the strong Diophantine condition in \cite{GolSch08} and \cite{GolSch11}.

\medskip
The homogeneity property of the spectrum of quasi-periodic Schr\"{o}dinger operators in the regime of {\em small coupling} was recently established in the paper~\cite{DamGolLuk14}. They consider the continuum quasi-periodic Schr\"{o}dinger operator
\begin{equation} \label{eq:1-1}
-\psi''(x) +  V(x) \psi(x) = E \psi(x), \qquad x \in \IR,
\end{equation}
where
\begin{equation} \label{eq:1.UU}
V(x) = \sum_{n \in \mathbb{Z}^\nu}\, c(n) e (x n \omega),
\end{equation}
\begin{equation}\label{eq:1-fouriercoeff}
|c(m)| \le \ve \exp(-\kappa_0 |m|)
\end{equation}
with $\kappa_0>0$, $\ve$ being small and with a Diophantine vector $\omega$,
\begin{equation}\label{eq:1PAI7-5-85a}
|n \omega| \ge a_0 |n|^{-b_0}, \quad n \in \mathbb{Z}^\nu \setminus \{ 0 \}
\end{equation}
for some
$$
0 < a_0 < 1,\quad \nu < b_0 < \infty.
$$
Using the estimates from~\cite{DamGol14} they establish the following relation between the gaps and the bands of the operator:

There exists $\varepsilon_0 = \varepsilon_0(\kappa_0, a_0, b_0) > 0$ such that for $0 < \varepsilon < \varepsilon_0$, the gaps in the spectrum of the operator $H$ can be labeled as $G_m = (E_m^-, E_m^+)$, $m \in \mathbb{Z}^\nu \setminus \{ 0 \}$, $G_0 = (-\infty, \underline{E})$ so that the following estimates hold:
\begin{enumerate}

\item[(i)] For every $m \in \mathbb{Z}^\nu \setminus \{ 0 \}$, we have
$$
E^+_m - E^-_m \le 2 \varepsilon \exp \Big( -\frac{\kappa_0}{2} |m| \Big).
$$

\item[(ii)] For every $m, m' \in \mathbb{Z}^\nu \setminus \{ 0 \}$ with $m' \neq m$ and $|m'| \ge |m|$, we have
$$
\dist ([E_m^-, E_m^+], [E_{m'}^-, E_{m'}^+]) \ge a |m'|^{-b},
$$
where $a, b > 0$ are constants depending on $a_0, b_0, \kappa_0, \nu$.

\item[(iii)] For every $m \in \mathbb{Z}^\nu \setminus \{ 0 \}$,
$$
E_m^--\underline{E} \ge a |m|^{-b},
$$
\end{enumerate}

This feature was not known for the almost Mathieu operator even in the regime of small coupling. The homogeneity property can be derived from
(i)--(iii). In the current paper we establish a slightly weaker version of (i)--(iii).

\begin{thmg}\label{thm:gaps-separation}
Let $ \omega\in\T_{c,a} $ and assume $ L(\omega,E)\ge \gamma>0 $ for any $ E\in(E',E'') $. There exists $ N_0(V,c,a,\gamma) $ such that if $ N\ge N_0 $ and $ G_1 $, $ G_2 $ are two gaps in $ \cS_\omega\cap(E',E'') $	with $ |G_1|,|G_2|>\exp(-N^{1-}) $ then $ \dist(G_1,G_2)>\exp(-(\log N)^{C_0}) $, with $C_0=C_0(V,c,a,\gamma) $. The same statement holds for gaps in $ \cS_{\bar N,\omega}\cap(E',E'') $ with $ \bar N\ge N $.	
\end{thmg}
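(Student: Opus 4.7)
The plan is to pass to a finite scale $M$ commensurate with $N$, reduce the problem to a separation statement for the Dirichlet (or periodic) spectrum at scale $M$, and then derive the separation from a Bernstein/Cartan-type rigidity of the Dirichlet determinants $f_M(x,\omega,E)$ developed in \cite{GolSch08} and \cite{GolSch11}.

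The first step is to choose $M$ of the order of $N$ (say $M=\lfloor c_1 N\rfloor$ with $c_1=c_1(\gamma)$ small). By the gap-tracking estimates from \cite{GolSch11}, together with the fact that $|G_i| \ge \exp(-N^{1-})$, each gap $G_i$ is shadowed by a gap $G_i'$ of $\cS_{M,\omega}\cap(E',E'')$ with $\operatorname{dist}_{\mathrm H}(G_i,G_i')\le \exp(-c M)$. In particular $|G_i'|\ge \tfrac12 |G_i|$, and $\operatorname{dist}(G_1,G_2)\ge \operatorname{dist}(G_1',G_2') - 2\exp(-cM)$, so it is enough to prove the lower bound $\operatorname{dist}(G_1',G_2')\ge \exp(-(\log N)^{C_0})$ at scale $M$.

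The second step is to exploit the fact that $f_M(x,\omega,E)$ is a polynomial of degree $M$ in $E$ whose zero set (as $x$ varies) is exactly $\cS_{M,\omega}$. For $E$ sitting strictly inside a gap of size $\ge \exp(-N^{1-})$, the large deviation theorems of \cite{GolSch08} together with the avalanche principle give, for all $x$ off an exceptional set of measure $\exp(-M^{\alpha})$,
\[
\log |f_M(x,\omega,E)| \ge M L(\omega,E) - M^{1-\alpha}\ge M\gamma - M^{1-\alpha},
\]
and the bound persists after integration in $x$, so that $\tfrac1M\int_{\T}\log|f_M(x,\omega,E)|\,dx\to L(\omega,E)\ge\gamma$. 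The key structural input is that on the band $B'=(E_1'^{+},E_2'^{-})$ between the two gaps one has $\tfrac1M\int_{\T}\log|f_M(x,\omega,E)|\,dx = L(\omega,E)+o(1)$, while just inside the edges of $B'$ the quantity $\log|f_M|$ has to grow by $\sim M\gamma$.

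The third and key step is to contradict $|B'|<\exp(-(\log N)^{C_0})$. Using the Cartan/Bernstein estimates for polynomials of degree $M$ that are systematically applied throughout \cite{GolSch08}, the variation of $\log|f_M(x,\omega,\cdot)|$ on an interval of length $\delta$ adjacent to a band is at most $M\log(1/\delta) + O(M)$. Forcing the descent from $\log|f_M|\sim ML$ in $B'$ to values incompatible with the lower bound in the interior of the gap — using the avalanche principle to propagate the estimate between the exceptional sets in $x$ to an estimate uniform on intervals in $E$ — yields $|B'|\ge \exp(-(\log M)^{C_0})$, which (after re-absorbing constants and taking $C_0$ slightly larger) is the claim.

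The main obstacle I expect is in the third step: the usual large deviation bound for $\log|f_M|$ only holds off an exceptional set of $x$, whereas to compare the Dirichlet determinant at two values of $E$ that differ by $|B'|$ we need an estimate valid for the same $x$ at both energies. This is exactly where one combines the subharmonicity of $E\mapsto \log|f_M(x,\omega,E)|$ on a complex neighborhood with Cartan's lemma to remove small exceptional sets in $E$, and then uses the avalanche principle quantitatively to track how the transition between gap-type and band-type behavior constrains the width $|B'|$. Getting the correct exponent $C_0=C_0(V,c,a,\gamma)$ — rather than merely some polylogarithmic rate — amounts to carefully bookkeeping these steps as in \cite{GolSch11}, and this is precisely the delicate quantitative work the proof will require.
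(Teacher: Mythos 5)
Your approach departs substantially from the paper's, and the key step (your third step) has a gap I do not see how to close.

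\textbf{What the paper actually does.} The paper picks a point $E_0\in\cS_\omega$ in the band $J$ between the two gaps. Applying \cref{lem:initial-spectral-segment} (which rests on the spectrum criterion of \cref{lem:spectrum-criterion} together with the Wegner estimate \cref{prop:Wegner}) it produces a phase interval $I$ of length $\ge\exp(-(\log N)^C)$ centered at some $x_0$, and an index $j$, so that $E_j^{[-N,N]}(x_0,\omega)$ is within $\exp(-N^{1-})$ of $E_0$ and the eigenvector $\psi_j^{[-N,N]}(x,\omega)$ is bulk-localized for all $x\in I$. Then \cref{cor:size-of-spectral-segment} (the Wegner estimate read as a \emph{lower} bound on band widths) gives $|E_j^{[-N,N]}(I,\omega)|\ge\exp(-(\log N)^C)$, and the multi-scale stabilization argument of \cref{lem:finite-segment-to-full-spectrum-Bourgain} shows that all but $\exp(-cN)$ of $E_j^{[-N,N]}(I,\omega)$ lies in $\cS_\omega$. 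Since the two gaps have width $>\exp(-N^{1-})$ and contain no full-scale spectrum, this piece of $\cS_\omega$ must sit inside $J$, yielding $|J|\ge\exp(-(\log N)^C)-\exp(-cN)$.

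\textbf{Where your argument goes wrong.} Your third step asserts that the ``variation of $\log|f_M(x,\omega,\cdot)|$ on an interval of length $\delta$ adjacent to a band is at most $M\log(1/\delta)+O(M)$,'' and that this forces $|B'|$ to be large by contradicting a ``descent'' from $ML$ down to ``incompatible'' values. This picture is not correct. For fixed $x$, $f_M(x,\omega,E)=\prod_k(E_k^{(M)}(x,\omega)-E)$ has zeros at the eigenvalues; in the band $B'$ the function $\log|f_M(x,\omega,\cdot)|$ attains $-\infty$ for suitable $x$, and a polynomial of degree $M$ of size $\exp(ML)$ outside an interval can perfectly well vanish inside an arbitrarily short interval (take clustered roots). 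Bernstein/Cartan-type rigidity in the $E$ variable therefore does not by itself preclude $|B'|$ from being arbitrarily small. What is genuinely needed is a quantitative Wegner estimate: it bounds from below the total variation of $E_j^{(M)}(x,\omega)$ as $x$ ranges over a phase interval, which is the correct quantity controlling the band width. You do not invoke this anywhere, and the large deviation estimate plus the avalanche principle do not substitute for it. In addition, your first step (transferring gaps $G_i$ of $\cS_\omega$ to gaps $G_i'$ of $\cS_{M,\omega}$ with small Hausdorff distance) requires ruling out spurious finite-scale spectrum from edge-localized eigenvectors inside $G_i$; this is possible but you do not give the argument, and it is in any case unnecessary, since the paper works with $\cS_\omega$ directly and transfers only the finite-volume spectral \emph{segment} to the full spectrum.
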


\smallskip
It is natural to inquire about the precise calibration between the gaps and the bands. In particular, is it true that, in \cref{thm:gaps-separation}, one has
$$
\dist (G_1,G_2) \ge a |N|^{-b},
$$
with $a, b > 0$ being constants depending on $V$, $\omega$ and the lower bound  $L(E)\ge\gamma>0$? Moreover, if so, are these optimal estimates?

\bigskip
Consider the almost Mathieu operator
\begin{equation} \label{eq:3.amopera}
[H(x,\omega) \phi](n) = -\phi(n-1)-\phi(n+1)+2\lambda\cos(2\pi(x+n\omega)) \phi(n),\quad n\in \mathbb{Z}.
\end{equation}

It is a fundamental fact that the Lyapunov exponent here obeys
\[
L(\omega,E) \ge \log |\lambda|
\]
for all $E$. Thus, as a particular case of \cref{thm:homogeneity} and as a consequence of Aubry duality, we have the following.

\begin{thmha}\label{thm:homogeneity-AMO}
Let $|\lambda|\neq 1$ and $\omega\in \mathbb{T}_{c,a}$. The set $\mathcal{S}_\omega$ is a $\tau$-homogeneous set for some $\tau=\tau(c,a,\lambda)$. Furthermore, the estimates in \cref{thm:gaps-separation} hold.
\end{thmha}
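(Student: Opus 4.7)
The plan is to split into the two regimes $|\lambda|>1$ and $|\lambda|<1$, handling the first directly via Theorems~H and~G and reducing the second to the first via Aubry duality.

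In the supercritical regime $|\lambda|>1$, Herman's classical lower bound for the almost Mathieu cocycle gives $L(\omega,E)\ge \log|\lambda|>0$ for every $E\in\R$. This places the operator squarely within the hypothesis of Theorem~H(a) with $\gamma=\log|\lambda|$, so $\cS_\omega$ is $\tau$-homogeneous with $\tau=\tau(c,a,\lambda)$; likewise Theorem~G applies on any open interval containing $\cS_\omega$, yielding the claimed gap separation. The finite-volume counterparts for $\cS_{N,\omega}$ follow from the finite-volume parts of Theorems~H and~G respectively.

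In the subcritical regime $|\lambda|<1$ one has $L(\omega,E)=0$ throughout the spectrum, so Theorems~H and~G cannot be invoked directly. Instead, I would appeal to Aubry duality, which at the level of infinite-volume spectra reads
\begin{equation*}
\cS_\omega^{(\lambda)} \;=\; \lambda\,\cS_\omega^{(1/\lambda)},
\end{equation*}
the superscript indicating the coupling constant. Since $|1/\lambda|>1$, the supercritical case gives $\tau'$-homogeneity of $\cS_\omega^{(1/\lambda)}$, and because $\tau$-homogeneity is preserved by any non-zero affine rescaling of $\R$ (both sides of \cref{eq:1homogeneous} scale by the same factor $|\lambda|$), $\cS_\omega^{(\lambda)}$ is $\tau'$-homogeneous as well, with the constant depending only on $c,a,\lambda$. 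The gap-separation estimate transfers in the same way: a gap $G$ in $\cS_\omega^{(\lambda)}$ of length $\ge \exp(-N^{1-})$ corresponds to a dual gap of length $\ge|\lambda|^{-1}\exp(-N^{1-})$, which is still $\ge \exp(-\tilde N^{1-})$ after a harmless adjustment of the exponent for $N$ sufficiently large; Theorem~G applied at coupling $1/\lambda$ and then rescaled by $|\lambda|$ therefore yields the required bound.

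The main obstacle will be the finite-volume statement in the subcritical regime, since Aubry duality is most naturally formulated for the infinite-volume operators and does not immediately produce an identity of the form $\cS_{N,\omega}^{(\lambda)}=\lambda\,\cS_{N,\omega}^{(1/\lambda)}$ between truncated Dirichlet spectra. One would therefore need either a finite-volume version of the duality (a direct matrix-level computation relating the $(2N+1)\times(2N+1)$ truncations at dual couplings) or an approximation argument exploiting the convergence $\cS_{N,\omega}\to\cS_\omega$ to transport the infinite-volume homogeneity and gap-separation constants down to finite $N$, with a uniform quantitative control that survives the passage to finite volume.
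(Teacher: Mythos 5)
Your proposal follows the same route as the paper: the paper states Theorem~H$'$ as an immediate consequence of Theorem~H (applied with $\gamma = \log|\lambda|$ when $|\lambda|>1$, using the fact that $L(\omega,E)\ge\log|\lambda|$) together with Aubry duality for $|\lambda|<1$, and offers no further proof. Your supercritical argument and your use of the spectral identity $\cS_\omega^{(\lambda)} = \lambda\,\cS_\omega^{(1/\lambda)}$ together with the affine invariance of homogeneity and of the gap-separation bound are exactly what the paper has in mind.

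The one point where you go beyond the paper is your (correct) observation that Aubry duality is an infinite-volume statement and does not produce a relation between the Dirichlet truncations $H_{[-N,N]}$ at dual couplings, so the finite-volume part of Theorem~G does not transfer to the subcritical regime by this argument. The paper does not address this either; the sentence ``Furthermore, the estimates in Theorem~G hold'' should be read as referring to the $\cS_\omega$ statement in the subcritical case (all of the paper's finite-volume machinery, Wegner estimates, large deviations, localization, requires positive Lyapunov exponent and so only applies to $|\lambda|>1$, where the finite-volume claim in Theorem~G does hold directly). With that reading understood, your argument is complete and matches the paper's.
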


The relevance of the homogeneity property to the inverse spectral theory of almost-periodic potentials (or Jacobi matrices with almost periodic coefficients) was established in the remarkable work by Sodin and Yuditskii \cite{SodYud95, SodYud97}. They studied the inverse spectral problem for reflectionless Jacobi matrices whose spectrum is a given homogeneous set. The reflectionless potentials were introduced, in the continuum setting, by Craig \cite{Cra89}. Reflectionless potentials are very relevant to the spectral theory of ergodic potentials. Different classes of potentials, which are in fact reflectionless, were studied, prior to \cite{Cra89}, in the basic works on ergodic potentials by Deift and Simon \cite{DeiSim83}, Johnson \cite{Joh82}, Johnson and Moser \cite{JohMos82}, Kotani  ~\cite{Kot84},\cite{Kot87}. It was shown in the work \cite{Cra89} that being reflectionless is the key feature which allows for the development of a number of fundamental objects from the periodic theory like auxiliary spectrum, trace formula,
product expansions; see also the work by Gesztezy and Simon ~\cite{GesSim96}. Employing the version of the trace formula from ~\cite{GesSim96}, Gesztezy and Yuditskii \cite{GesYud06} found another remarkable consequence of the homogeneity property combined with being reflectionless: the spectrum is purely absolutely continuous. See also the paper by Poltoratski and Remling~\cite{PolRem09}, where an even stronger result was established.

\smallskip

In view of these results and the results of the current paper it seems very natural to investigate the connection between the homogeneity property and the spectral phase transition theory of quasi-periodic potentials, see the work by Avila \cite{Avi13}. Namely, we would like to pose the following question:

\medskip

\underline{Problem 1.} Consider
\begin{equation}\label{eq:1.SchP2}
\bigl[H(x,\omega)\varphi\bigr](n) \equiv - \varphi(n-1) - \varphi(n+1) +  \lambda V(x + n\omega)\varphi(n) = E\varphi(n)
\end{equation}
with real analytic $V$ and Diophantine $\omega$. It is known that for small $\lambda$, the operator has a complete set of Bloch-Floquet eigenfunctions. We expect that, in analogy to the small-coupling result in the continuum case from \cite{DamGolLuk14}, one can prove also that the spectrum is homogeneous and, moreover, that the calibration estimates (i)--(iii) for gaps and bands hold. Assume that the Lyapunov exponent $L(E,\lambda)$ vanishes on the spectrum for all $0<\lambda<\lambda_0$. Can one find a complete set of Bloch-Floquet eigenfunctions for $0<\lambda<\lambda_0$? The main issue here is how to control the homogeneity property of the spectrum using the zero Lyapunov exponent on the spectrum. Indeed, while vanishing Lyapunov exponents on a set of positive Lebesgue measure imply the presence of absolutely continuous spectrum, the homogeneity of the spectrum is a sufficient condition for \emph{purely} absolutely continuous spectrum. Once the latter property has been established, the existence of a complete set of Bloch-Floquet eigenfunctions follows from the work of Kotani \cite{Kot84}, Deift-Simon \cite{DeiSim83}, and Avila-Krikorian \cite{AK}.

\medskip

Finally, we want to stress the fact that the analysis of ``fine properties" of the spectral set $ \cS_{N,\omega}:=\cup_{x\in\T}\spec H_{[-N,N]}(x,\omega)$ on \textbf{a finite interval}, especially with general analytic $V$, seems to be a very interesting problem in its own right. The numerical plots of the eigenvalues $E^{(N)}_j(x)$ of $H_{[-N,N]}(x,\omega)$ $($Rellich parametrization$)$ look very complicated, see the following figures.

\begin{figure}[htb]
\includegraphics{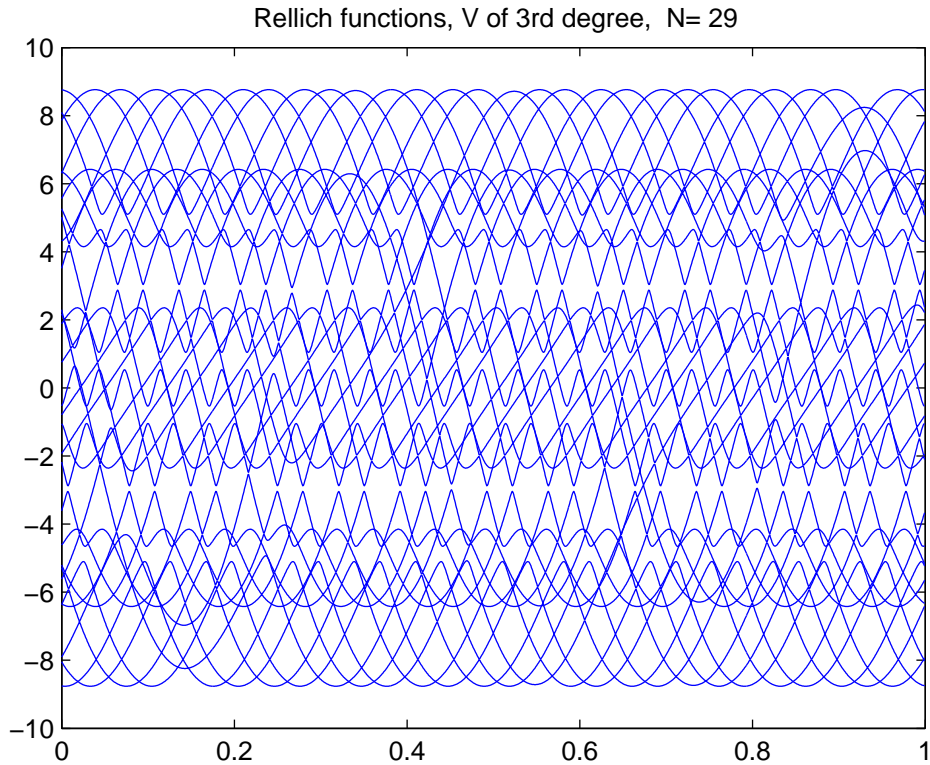}
\caption{\label{fig:4}Rellich functions.}
\end{figure}
One can see some ``almost gaps" shadowed by ``rare graphs fragments", see for instance Figure~\ref{fig:4} between the spectral value levels
$E=2$ and $E=4$. Even for the almost Mathieu case, the picture still has some ``gaps shadowing", see Figure~\ref{fig:5}.
\begin{figure}[htb]
\includegraphics{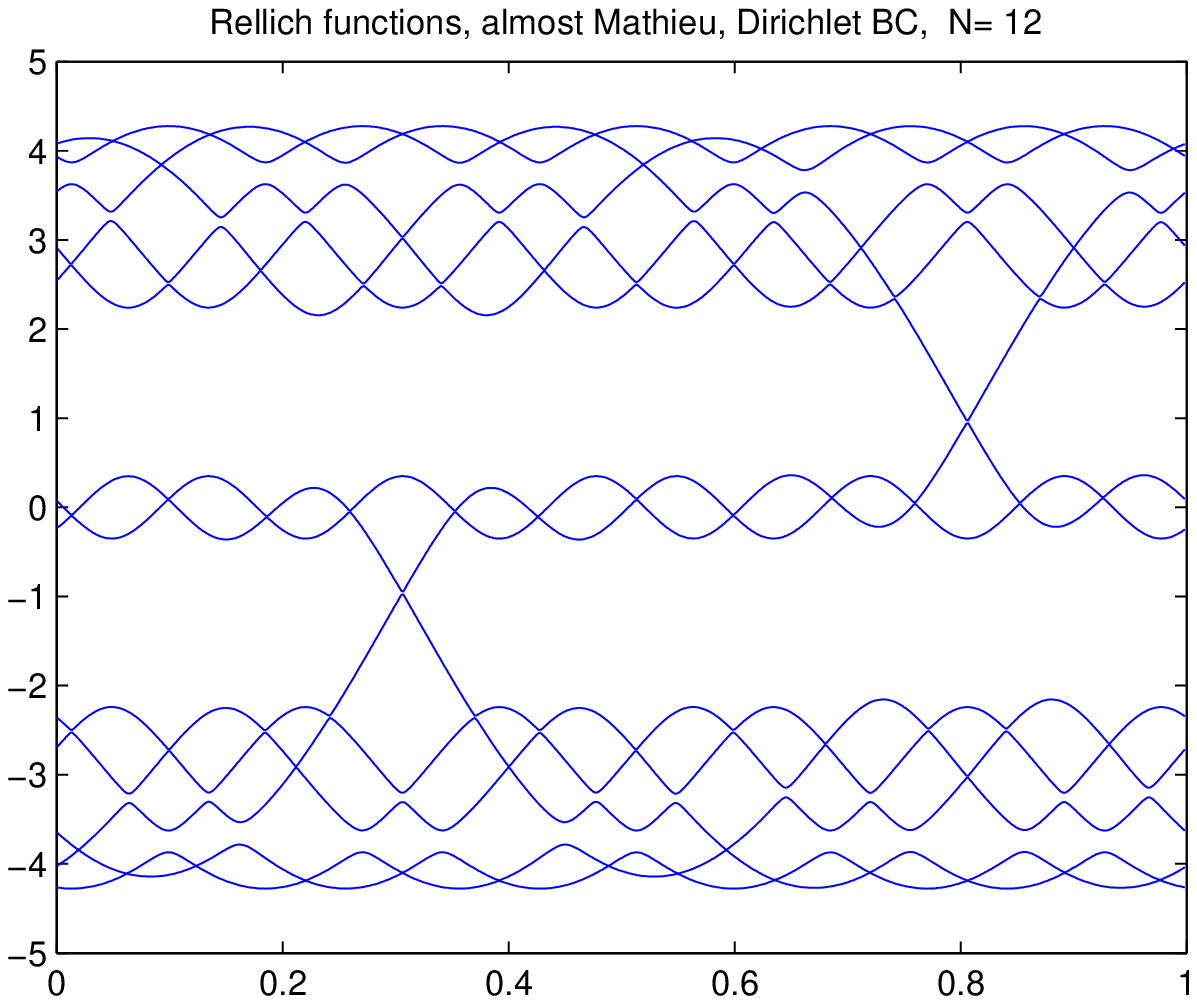}
\caption{\label{fig:5} Rellich functions.}
\end{figure}
\begin{figure}[htb]
\includegraphics{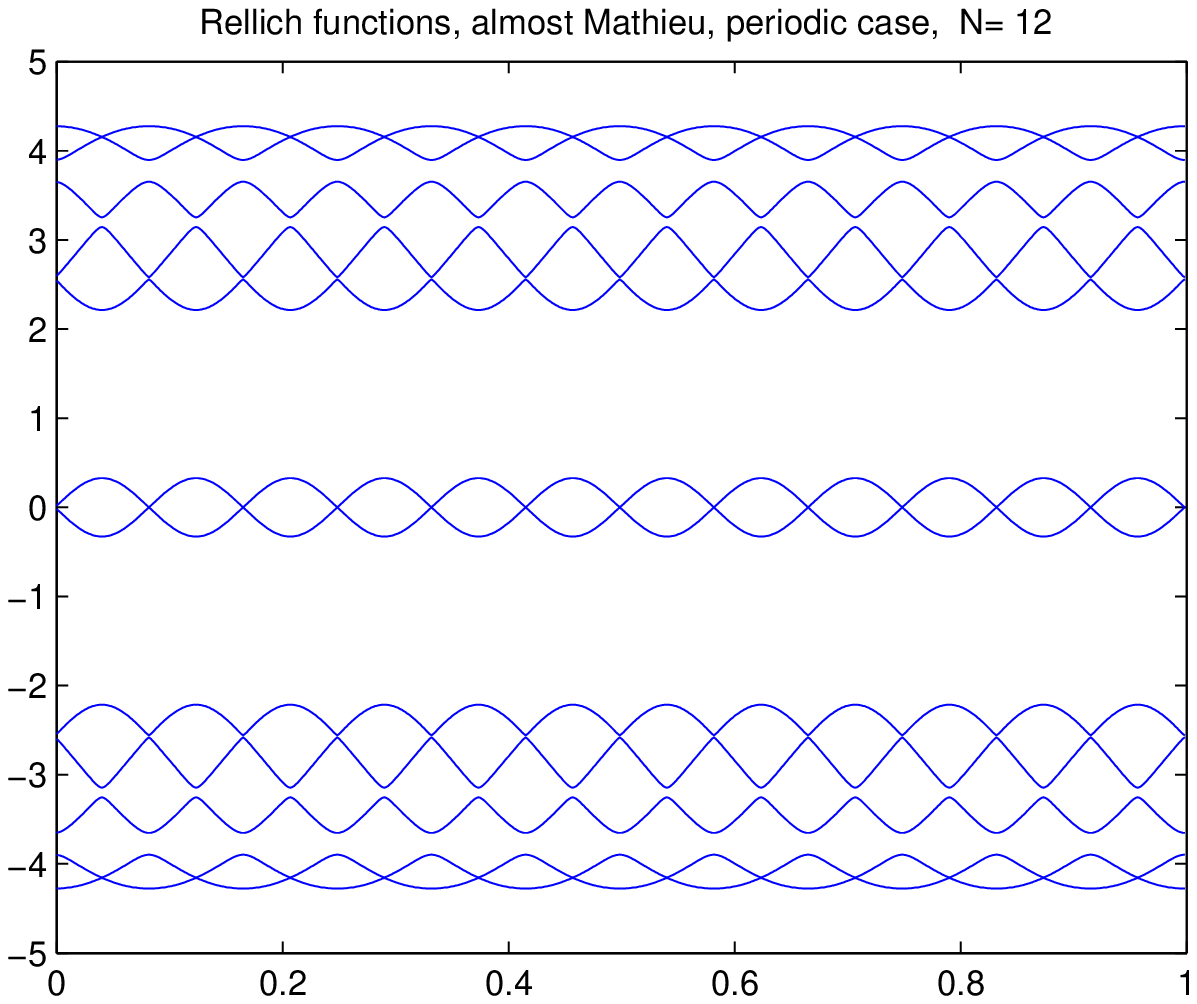}
\caption{\label{fig:6}Rellich functions.}
\end{figure}
The picture simplifies under periodic boundary conditions, see Figure~\ref{fig:6}.

\bigskip

Finally, we would like to pose the following problem:

\medskip

\underline{Problem 2.} (a) Describe as accurately as possible the ``true" gaps in the spectral set $ \cS_{N,D}:=\cup_{x\in\T}\spec H_{[-N,N],D} (x, \omega)$ on a finite interval with Dirichlet boundary conditions. In particular, determine the gaps of smallest size.

(b) Develop a description for the spectral set $ \cS_{N,P}:=\cup_{x\in\T}\spec H_{[-N,N],P}(x,\omega)$ on a finite interval with periodic boundary conditions.

(c) Develop a description of the spectral sets on a finite interval for rational approximations of the frequency $\omega$.

\subsection{Description of the Method}

As mentioned before, the homogeneity of the spectrum for continuous Schr\"odinger operators and small coupling constant was recently established in \cite{DamGolLuk14} via detailed quantitative results concerning the structure of the gaps in the spectrum. We show in this paper that in the regime of positive Lyapunov exponent, homogeneity can be obtained with less machinery. In fact one does not even need to use finite scale localization. Rather, we use finite scale approximate eigenvalues rather than eigenfunctions. This approach only relies on the availability of a large deviation estimate; compare \cite{Bou02}, where a similar idea was used.

This method has the advantage of avoiding the removal of a ``non-arithmetic'' set of frequencies, which would be needed to eliminate the double resonances, as required in order to establish localization. To capture the infinite volume spectrum, we establish a criterion for given $E_0$ to fall in the spectrum on the whole lattice, see Lemma~\ref{lem:spectrum-criterion}.

The most basic mechanism behind the homogeneity of the spectrum is the {\em Wegner estimate}. It is a finite volume version of the fact that the {\em integrated density of states is H\"{o}lder continuous}. For any $x , \omega \in
\tor$, let
\EQ{\label{eq:Ej}
\bigl\{E_j^{(N)} (x, \omega)\bigr\}_{j=1}^N,\quad
\bigl\{\psi_j^{(N)} (x, \omega, \cdot)\bigr\}_{j=1}^N
}
denote the eigenvalues and a choice of normalized eigenvectors of $H_{[1, N]}(x, \omega)$, respectively.  The Wegner estimate amounts to the fact that the graphs of $E_j^{(N)} (x, \omega)$ cannot be ``too flat''. See the discussion of the quantitative version of this issue in Remark~\ref{rem:Wegner}.
The other main reason for  the homogeneity of the spectrum is the fragmentary stabilization of the graphs of the Dirichlet eigenvalues plotted against the phases at different scales, see Figure~\ref{fig:stability}. This allows for good control on the structure of the spectrum on the whole lattice $\mathbb{Z}$ via the spectrum on intervals $[-N,N]$ with large $N$. Thus, the Wegner estimate makes it possible to obtain finite scale spectral segments of considerable size that we can then screen, via fragmentary stabilization, to obtain relatively large sets in the infinite volume spectrum.
Heuristically, this is how the proof of \cref{thm:homogeneity} proceeds.

\begin{figure}[htb]
\includegraphics{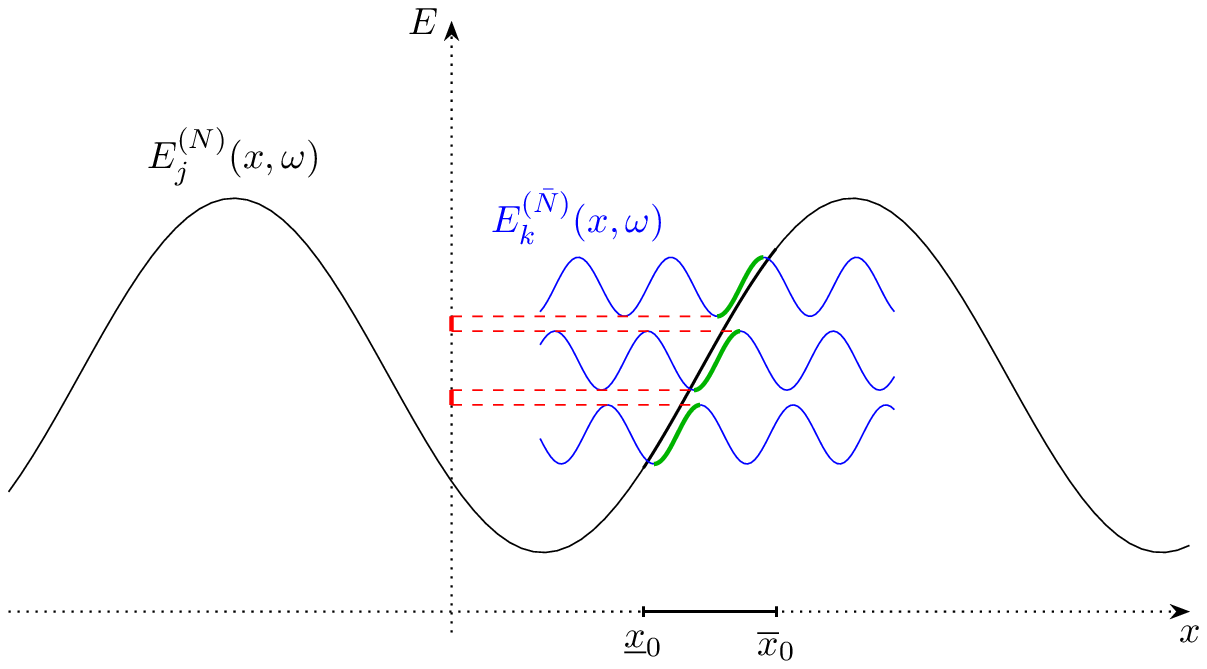}
\caption{\emph{\label{fig:stability} Fragmentary stability of spectral segments}: The green fragments ($ \bar N\gg N $) are very close
to the spectral segment $ E_j^{(N)}(x,\omega) $, $ x\in[\underline x_0,\overline x_0] $. The small red segments on the
$ E $ axis represent the exceptional set.}
\end{figure}

As we already mentioned, the resolution of the fragmentary stability picture is accurate enough to allow for the proof of homogeneity to go through. However, it seems that for possible future refinements of the result, one needs the more detailed picture given by finite scale localization.  This is why, after we prove the main result, we also develop the finite scale localization approach. The novelty here is that we focus on the almost Mathieu
operator for which we can establish the results without removal of any frequencies $ \omega\in\T_{c,a} $. This is due to the method of Jitomirskaya \cite{Jit99} for eliminating resonances. The advantage of this method resides with the fact that it explicitly identifies the resonant phases as $x=m\omega/2$ mod $1$, $m\in \mathbb{Z}$, and there is no need to eliminate any further $\omega$'s.

We now give a rough outline of the main ingredients for both parts of the paper. Most of them were developed in \cite{GolSch08,GolSch11}. In fact, we shall cite several results from these papers as part of our argument, see Propositions~A--I below. The following three items describe basic properties of the transfer matrix formalism. Throughout, it is essential that the Lyapunov exponents are positive.

\begin{itemize}

\item Large deviation estimate for the characteristic determinants of the Dirichlet problem on a finite interval.

\item H\"{o}lder continuity of the Lyapunov exponent.

\item Uniform upper bounds for the Dirichlet characteristic determinants.

\end{itemize}

The next three items build upon these foundations and describe essential features of the spectral theory, in particular the localization of the eigenfunctions.

\begin{itemize}

\item A version of the Wegner estimate.

\item Elimination of double resonances on finite intervals.

\item Exponential localization on finite intervals.

\end{itemize}

Finally, these tools feed into the following facts, which lie deeper and are of crucial importance to understanding the structure of the gaps in the spectrum.

\begin{itemize}

\item Quantitative separation of the Dirichlet eigenvalues on a finite interval.

\item Formation of the spectrum on the whole lattice from the spectra on finite intervals.

\end{itemize}

This paper is not self-contained since we refer to reader to \cite{GolSch08,GolSch11} for some of the rather involved proofs of the aforementioned technical ingredients. These results are used repeatedly in this work. On the other hand, some of the results derived from them, such as the Wegner estimate, are easy to obtain and we present the proofs of these facts.

\smallskip

Certain finer spectral properties, most notably the localization of the eigenfunctions and separation of the eigenvalues in the setting of \cref{thm:homogeneity}, require elimination of a Hausdorff dimension zero set of $\omega$. So we cannot follow that route for the almost Mathieu operator. Therefore, approximately half of the work in this paper is devoted to establishing the needed ingredients for the Mathieu operator with $|\lambda|>1$ and with arbitrary $\omega\in \T_{c,a}$.

On first reading, it is recommended to focus entirely on the proofs of \cref{thm:gaps-separation} and Theorem~\ref{thm:homogeneity}.


\section{Transfer Matrices and the Wegner Estimate}

We start by recalling the basics of the transfer matrix formalism. If $ \psi $ is a solution of the difference equation $ H(x,\omega)\psi=E\psi $, then we have
\begin{equation*}
	\begin{bmatrix}
		\psi(b+1)\\ \psi(b)
	\end{bmatrix}
	=M_{[a,b]}(x,\omega,E)
	\begin{bmatrix}
		\psi(a)\\ \psi(a-1)
	\end{bmatrix},
\end{equation*}
where the transfer matrix is given by
\begin{equation*}
	M_{[a,b]}(x,\omega,E)= \prod_{k=b}^a \begin{bmatrix}
		V(x+k\omega)-E & -1 \\
		1 & 0
	\end{bmatrix}.
\end{equation*}
We let $ M_N=M_{[1,N]} $. The Lyapunov exponent is defined by
\begin{equation*}
	L(\omega,E)=\lim_{N\to\infty}\frac{1}{N}\int_0^1 \log\norm{M_N(x,\omega,E)}\,dx
	\stackrel{a.s.}{=} \inf_{N} \frac{1}{N}\log\norm{M_N(x,\omega,E)}.
\end{equation*}
The H\"older continuity of the Lyapunov exponent as a function of the energy was
established in \cite{GolSch01} under a strong Diophantine condition. The result was improved in \cite{YouZha14} to hold even for weak Liouville frequencies. The dependence of the H\"older exponent on $\gamma$ (see the following proposition) was removed (for strongly Diophantine frequencies) in Proposition 8.3 of~\cite{Bou05}.

\begin{propAlpha}\label{prop:Lyapunov-continuity}
  	Assume $\omega\in \T_{c,a}$ and $L(\omega,E_0)\ge \gamma>0$. There
    exists $\epsilon_0=\epsilon_0(V,c,a,\gamma)$ such that
    $L(\omega,E)\ge \gamma/2$ for any $E\in (E_0-\epsilon_0,E_0+\epsilon_0)$. Moreover, there exists
    $\alpha_0=\alpha_0(V,c,a)>0$ such that
    \[
    |L(\omega,E_1)-L(\omega,E_2)|\le C(V,c,a,\gamma)|E_1-E_2|^{\alpha_0},
    \]
    for any $E_j\in (E_0-\epsilon_0,E_0+\epsilon_0)$, $j=1,2$.
\end{propAlpha}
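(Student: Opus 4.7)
The plan is to follow the scheme of \cite{GolSch01}, which rests on two pillars: a \emph{large deviation theorem} (LDT) for the finite-scale quantities $u_N(x,\omega,E):=\frac{1}{N}\log\|M_N(x,\omega,E)\|$, and the \emph{Avalanche Principle} (AP) for products of $\mathrm{SL}(2,\R)$ matrices. The real-analyticity of $V$ lets us extend $u_N$ to a subharmonic function on a complex strip in $x$; since $\det M_N=1$ forces $u_N\ge 0$, and since $\|M_N\|\le(|E|+\|V\|_\infty+1)^N$ provides a uniform upper bound on the strip, the subharmonic machinery (Riesz representation, BMO bounds) is available throughout.

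First I would establish the LDT: under $\omega\in\T_{c,a}$ and $L(\omega,E)\ge\gamma/2$,
\[
\mathrm{mes}\bigl\{x\in\T:|u_N(x,\omega,E)-L_N(\omega,E)|>\eta\bigr\}<C\exp(-cN^{\sigma}\eta),
\]
where $L_N(\omega,E):=\int_0^1 u_N(x,\omega,E)\,dx$. This is the main input and the place where the Diophantine condition enters, via small-divisor control of the Poisson integral against the Riesz mass of $u_N$. Next, apply the AP to the telescoping $M_{kN}(x,\omega,E)=\prod_j M_N(x+jN\omega,\omega,E)$, whose hypotheses on $\|M_N\|$ and on the angles between the Lyapunov directions hold off an exponentially small set in $x$ thanks to the LDT. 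Integrating in $x$ and passing $k\to\infty$ yields the rate
\[
|L(\omega,E)-L_N(\omega,E)|\le C(\log N)^{B}/N.
\]

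Second, exploit subharmonicity in $E$: for fixed $N$, the map $E\mapsto L_N(\omega,E)$ extends to a subharmonic function on a complex disc around $E_0$ with upper bound $\log(|E|+\|V\|_\infty+1)$, so standard subharmonic continuity estimates yield a polynomial-in-$N$ modulus of continuity in $E$. Balancing this modulus against the convergence rate from the previous step by choosing $N$ as a small power of $|E_1-E_2|^{-1}$ produces
\[
|L(\omega,E_1)-L(\omega,E_2)|\lesssim|E_1-E_2|^{\alpha_0}
\]
for some $\alpha_0=\alpha_0(V,c,a)>0$. The lower bound $L(\omega,E)\ge\gamma/2$ on a neighborhood of $E_0$ then follows at once by choosing $\epsilon_0$ with $C\epsilon_0^{\alpha_0}\le\gamma/2$.

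The main obstacle is producing the LDT with a polynomial-in-$1/N$ rate $\eta\sim(\log N)^{A}/N$, since this governs both the convergence $L_N\to L$ and the final exponent $\alpha_0$. Making the Riesz representation of $u_N$ quantitatively effective with only the strong Diophantine bound on $\omega$ is the delicate heart of \cite{GolSch01}. The further sharpening in \cite{Bou05} that eliminates the $\gamma$-dependence of $\alpha_0$ requires additional resolvent analysis on a complex energy disc, and the extension in \cite{YouZha14} to weak Liouville frequencies relies on substantially more refined harmonic-analytic techniques to tolerate deteriorating small-divisor estimates.
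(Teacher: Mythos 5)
The paper does not prove this proposition; it simply cites it as essentially Theorem~6.1 of \cite{GolSch01}, with the first statement implicit there and also explicit in \cite{BJ}, and (in the preceding paragraph of the paper) it attributes the $\gamma$-independence of the exponent $\alpha_0$ to Proposition~8.3 of \cite{Bou05}. Your sketch of the underlying machinery from \cite{GolSch01}---the large deviation theorem for $u_N=\frac1N\log\|M_N\|$ via subharmonicity and the Diophantine condition, followed by the Avalanche Principle---is the right scheme at a high level, and you correctly flag that the stated form with $\alpha_0=\alpha_0(V,c,a)$ rather than $\alpha_0(V,c,a,\gamma)$ requires the further resolvent-based sharpening from \cite{Bou05}.

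However, the balancing step as you describe it does not yield H\"older continuity. Two specific gaps:
\begin{enumerate}
\item You use the convergence rate $|L-L_N|\lesssim(\log N)^B/N$, which is indeed the rate for $L_N$ itself. But this polynomial rate, combined with the trivial Lipschitz bound $|L_N(E_1)-L_N(E_2)|\le\exp(CN)|E_1-E_2|$, only gives
\[
|L(E_1)-L(E_2)|\lesssim\frac{(\log N)^B}{N}+\exp(CN)|E_1-E_2|\,,
\]
and choosing $N\sim\log(1/|E_1-E_2|)$ makes the first term behave like $1/\log(1/|E_1-E_2|)$, i.e.\ one lands on a \emph{log-H\"older} modulus rather than a H\"older one. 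The argument of \cite{GolSch01} instead relies on the exponentially fast \emph{accelerated} convergence
\[
\bigl|L-(2L_{2N}-L_N)\bigr|\lesssim\exp(-cN),
\]
which comes out of iterating the Avalanche Principle along a chain of scales with exponential LDT tails. Balancing \emph{that} against $\exp(CN)|E_1-E_2|$ by taking $N\sim c_1\log(1/|E_1-E_2|)$ produces genuine H\"older decay $|E_1-E_2|^{\alpha}$ (with $\alpha$ a priori depending on $\gamma$ because the constant $c$ does).
\item Your claim that subharmonicity of $E\mapsto L_N(\omega,E)$ together with a uniform upper bound gives a polynomial-in-$N$ modulus of continuity in $E$ is not correct. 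A subharmonic function bounded above (even one bounded between $0$ and a constant) need not satisfy any pointwise modulus of continuity; the issue is that the Riesz measure of $L_N$ in $E$ is essentially the finite-volume density of states, whose regularity is exactly what one is trying to establish. A priori one only gets the exponential Lipschitz bound on $L_N$ from the crude estimate $\|\partial_E M_N\|\le\exp(CN)$, and the H\"older conclusion must come from the acceleration in item (1), not from a good modulus of $L_N$.
\end{enumerate}
The first part of the statement (propagation of the lower bound $L\ge\gamma/2$ to a neighborhood of $E_0$) does follow immediately once H\"older continuity is in hand, as you say.
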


The above result is essentially \cite[Thm. 6.1]{GolSch01}. The first statement is implicit in \cite{GolSch01}, but it also follows
explicitly from \cite{BJ}.

Next we focus on results concerning the finite scale Dirichlet determinants. Let $H_{[a, b]}(x, \omega)$ be the Schr\"odinger operator defined via \eqref{eq:1.Sch} on a finite interval $[a,b]$ with Dirichlet boundary conditions, $\psi(a-1)=0, \psi(b+1)=0$. Let $f_{[a, b]}(x, \omega, E)=\det (H_{[a, b]}(x, \omega)-E)$ be its characteristic polynomial.
One has
\begin{equation}\label{eq:1.Dirdet'}
f_{[a, b]}(x, \omega, E) = f_{b-a+1} \bigl(x+(a-1)\omega, \omega, E\bigr),
\end{equation}
where
\begin{equation}\label{eq:Dirichlet-det}
    \begin{aligned}
    f_N(x,\omega, E)  &= \det\bigl(H_N(x, \omega)-E\bigr)\\
    & =
    \begin{vmatrix}
     V\bigl(x+\omega\bigr) - E & -1 & 0  &\cdots &\cdots & 0\\[5pt]
    -1 & V\bigl(x+2\omega\bigr) - E & -1 & 0 & \cdots & 0\\[5pt]
    \vdots & \vdots & \vdots & \vdots & \vdots & \vdots\\
    &&&&&-1 \\[5pt]
    0 & \dotfill & 0 & -1 && V\bigl(x+N\omega\bigr) - E
    \end{vmatrix}
    \end{aligned}
\end{equation}
It is known that we also have
\begin{equation}\label{eq:transfer-and-determinants}
	M_{[a,b]}(x,\omega,E)= \begin{bmatrix}
		f_{[a,b]}(x,\omega,E) & -f_{[a+1,b]}(x,\omega,E)\\
		f_{[a,b-1]}(x,\omega,E) & -f_{[a+1,b-1]}(x,\omega,E)
	\end{bmatrix}.
\end{equation}
It was shown in \cite{GolSch08} that through this relation it is possible to pass from large deviation estimates for the transfer
matrix to large deviation estimates for the determinants. The following large deviation estimate for the determinants is a basic tool
in our approach, see Corollary~3.6 in~\cite{GolSch08}.

\begin{propAlpha}\label{prop:ldt}
    Let $\omega\in \mathbb{T}_{c,a}$, $ E\in \R $ be such that
    $L(\omega,E)\ge\gamma > 0$. There exists $C_0=C_0(V,c,a,\gamma)$ such that
  	\begin{equation}\label{eq:2.ldtd}
		\mes \left \{ x\in \T: \left| \log|f_N(x,\omega,E)|-NL(\omega,E) \right|>H \right\}
			\le C\exp(-H/(\log N)^{C_0})
	\end{equation}
	for all $ H>(\log N)^{C_0} $ and $ N\ge 2 $.
	Moreover, the set on the left-hand side is
    contained in the union of $ \les N $ intervals, each of which does not
    exceed the bound stated in~\eqref{eq:2.ldtd} in measure.
\end{propAlpha}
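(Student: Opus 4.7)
The plan is to transfer the known large deviation estimate for the transfer matrix norm $\log\|M_N(x,\omega,E)\|$ to the scalar quantity $\log|f_N(x,\omega,E)|$ by exploiting the matrix identity \cref{eq:transfer-and-determinants}. Throughout, I would use the fact that, because $V$ is real-analytic and $1$-periodic, the map $x \mapsto f_N(x,\omega,E)$ extends holomorphically to a strip $\{|\Im z| < \rho\}$ about the real axis, and hence $u_N(x) := N^{-1}\log|f_N(x,\omega,E)|$ is subharmonic there, with a uniform upper bound $u_N \le L(\omega,E) + o(1)$ on the strip inherited from the Lyapunov exponent and the analytic norm of $V$.

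The upper half of \cref{eq:2.ldtd} is essentially free: by \cref{eq:transfer-and-determinants}, $|f_N(x,\omega,E)| \le \|M_N(x,\omega,E)\|$, so the set $\{x : \log|f_N| - NL > H\}$ is contained in the analogous super-level set for $\log\|M_N\|$, whose measure is controlled by the standard LDT for Schr\"odinger cocycles under $\omega\in\T_{c,a}$ with positive Lyapunov exponent.

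For the lower half I would first establish the averaged bound $\int_{\T} u_N(x)\,dx \ge L(\omega,E) - O((\log N)^{C}/N)$. This uses \cref{eq:transfer-and-determinants} again: at each $x$ at least one of the four entries of $M_N$ is comparable in modulus to $\|M_N\|$, so after averaging over the four entries of the matrix the inequality $\log|f_{[\cdot,\cdot]}| \ge \log\|M_N\| - O(1)$ integrates (via the transfer matrix LDT and subadditivity of $\int \log\|M_N\|$) to an averaged lower bound matching the uniform upper bound up to $O((\log N)^C/N)$. Combined with subharmonicity on the strip, this yields a small $\BMO(\T)$-norm for $u_N - L$, and the John--Nirenberg inequality then upgrades this into the sub-exponential distributional bound in \cref{eq:2.ldtd}.

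The main obstacle, and the reason the argument is nontrivial, is the final claim that the exceptional set lies in a union of $\lesssim N$ intervals each individually obeying the same measure bound. This is where the $O(N)$ enters from the degree of $f_N$: after multiplication by an appropriate bounded analytic factor, $f_N(\cdot,\omega,E)$ is controlled by a polynomial of degree $O(N)$ in $e^{2\pi i x}$, so by a Cartan/Bernstein--Walsh type estimate the sublevel set $\{u_N < L - H/N\}$ is contained in the union of at most $O(N)$ disks whose traces on $\T$ give the required intervals. Executing this step cleanly---so that each interval separately, not merely their union, obeys \cref{eq:2.ldtd}---is the hardest part, and would follow the Cartan-set machinery developed in \cite{GolSch08} for the holomorphic extension of $f_N$ to the strip $\{|\Im z|<\rho\}$.
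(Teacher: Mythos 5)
The paper does not supply a proof of Proposition~\ref{prop:ldt}: it is stated as a citation to Corollary~3.6 of \cite{GolSch08}, and the surrounding text simply points to \cref{eq:transfer-and-determinants} as the mechanism by which one transfers the transfer-matrix LDT to the determinants. Your outline is therefore a reconstruction of the \cite{GolSch08} argument rather than a comparison with something in this paper, and at the level of strategy you have identified the right ingredients: the trivial upper half from $|f_N|\le\|M_N\|$, the subharmonicity of $N^{-1}\log|f_N|$ on a strip together with the \cite{GolSch01} BMO/John--Nirenberg machinery for the distributional bound, and the Cartan/zero-count estimate (degree $\lesssim N$ in $e(x)$) for the ``union of $\lesssim N$ intervals'' claim. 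You also correctly flag the last step as the genuinely hard one.

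The weak link is your treatment of the averaged lower bound $\int_{\T}\log|f_N|\,dx \ge NL(\omega,E)-O((\log N)^{C})$. The claim that ``at each $x$ one of the four entries is comparable to $\|M_N\|$, so the inequality integrates'' does not go through as stated: the dominant entry rotates with $x$, and for a fixed entry the pointwise inequality $\log|f_{[\cdot,\cdot]}(x)|\ge\log\|M_N(x)\|-O(1)$ is simply false on a set of positive measure (indeed $f_N$ vanishes at $\lesssim N$ points). One also cannot pass to the sum $\sum_{ij}\log|M_N(i,j)|$, since for an $\mathrm{SL}(2,\R)$ matrix with large norm some entries can be as small as $\|M_N\|^{-1}$, so the geometric mean of the entries need not be comparable to the norm. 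What actually makes the averaged lower bound work in \cite{GolSch08} is more structural: one uses \cref{eq:1.Dirdet'} to identify the four entries as $f_{N'}(x')$ with $N'\in\{N-2,N-1,N\}$ and $x'\in\{x,x+\omega\}$, the uniform upper bound of \cref{prop:uniform-upper-bound}, and an avalanche-principle decomposition of $M_N$ into $\sim N/(\log N)^{A}$ blocks, which converts the LDT for $\log\|M_N\|$ into a matching lower bound for $\int\log|f_N|$ with only a $(\log N)^{C}$ loss. Without some version of that argument your step (the passage from the matrix LDT to the averaged lower bound for $f_N$ itself) has a real gap. Once that is repaired, the BMO/John--Nirenberg and Cartan steps are indeed as you describe and follow \cite{GolSch08}.
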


Subharmonic functions can deviate only towards large negative values but not large positive ones.
This explains the following result which is implied by Proposition~4.3 in \cite{GolSch08}.

\begin{propAlpha}\label{prop:uniform-upper-bound}
	Let $\omega\in \mathbb{T}_{c,a}$, $ E\in \R $ be such that
    $L(\omega,E)\ge\gamma > 0$. There exist $C_0=
    C_0(V,c,a,\gamma)$ and $C=C(V)$ such that
    \begin{equation}\label{eq2.61}
    \sup\limits_{x\in\tor} \log | f_N (x,\omega,E)| \le
    NL(\omega,E)+ C(\log N)^{C_0}
    \end{equation}
    for any $N\ge 2$.
\end{propAlpha}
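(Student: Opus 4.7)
The plan rests on the fact that, by real-analyticity of $V$, the determinant $f_N(z,\omega,E)$ extends to a function holomorphic in $z$ on a strip $S_{\rho_0}=\{|\Im z|<\rho_0\}$ with $\rho_0=\rho_0(V)>0$, so that $u(z):=\log|f_N(z,\omega,E)|$ is subharmonic on $S_{\rho_0}$. Expanding the determinant and using the boundedness of $V$ on $S_{\rho_0}$ gives the crude a priori bound $u(z)\le C_V N$ uniformly on this strip.

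First I would convert \cref{prop:ldt} into the one-dimensional $L^1$ estimate
\[
\int_\T \bigl(u(x)-NL(\omega,E)\bigr)^+\, dx \le C(\log N)^{C_0},
\]
obtained by writing the left-hand side as $\int_0^\infty \mes\{u-NL>H\}\, dH$, bounding the range $H\le (\log N)^{C_0}$ trivially, and integrating the tail estimate \cref{eq:2.ldtd} on $[(\log N)^{C_0},\infty)$.

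The central step is the sub-mean value inequality: for $x_0\in\T$ and $r=\rho_0/2$, on the disk $D(x_0,r)\subset S_{\rho_0}$,
\[
u(x_0) \le \frac{1}{\pi r^2}\int_{D(x_0,r)} u(z)\, dm(z) \le NL(\omega,E)+\frac{1}{\pi r^2}\int_{D(x_0,r)}\bigl(u-NL(\omega,E)\bigr)^+\, dm.
\]
By Fubini, the last integral equals $\int_{-r}^r \bigl(\int (u(x+iy)-NL)^+\, dx\bigr)\, dy$, where the inner integral may be enlarged to $\T$ since the integrand is non-negative. Granted a uniform horizontal-slice bound $\int_\T (u(x+iy)-NL)^+\, dx\le C(\log N)^{C_0}$ for $|y|\le r$, the double integral is at most $2r\cdot C(\log N)^{C_0}$, and dividing by $\pi r^2$ with $r$ a fixed constant depending only on $V$ gives $u(x_0)\le NL(\omega,E)+C(\log N)^{C_0}$. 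Since $x_0\in\T$ is arbitrary, this is the conclusion.

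The main obstacle will be establishing the uniform slice bound, as Proposition B is a priori only a statement on the real line. Because $(u-NL)^+$ is itself subharmonic (max of subharmonic functions), its horizontal-slice $L^1$ integral $y\mapsto \int_\T (u(x+iy)-NL)^+\, dx$ is a convex function of $y$. Pairing this convexity with the crude upper bound $u\le C_V N$ allows the $L^1$ estimate on $\T$ to be propagated into a slightly narrower strip at the cost of enlarging $C_0$ by a harmless factor. This propagation is precisely the subharmonic argument carried out in the proof of Proposition 4.3 of \cite{GolSch08}, which I would invoke to close this step.
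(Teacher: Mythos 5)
The paper does not in fact prove this proposition; it is stated as a direct consequence of \cite[Prop.~4.3]{GolSch08}, with only the heuristic remark that subharmonic functions cannot deviate towards large positive values. Your proposal tries to reconstruct that argument, and the overall scaffold (subharmonic extension of $\log|f_N|$ to a strip, crude a priori bound $u\le C_V N$, converting \cref{prop:ldt} into the $L^1$ bound $\int_\T(u-NL)^+\,dx\le C(\log N)^{C_0}$, then a sub-mean value estimate on a fixed-radius disk) is reasonable and the first several steps are correct.

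However, the propagation step that you single out as ``the main obstacle'' is where the argument genuinely breaks, and the mechanism you propose for it does not work. Write $g(y)=\int_\T\bigl(u(x+iy)-NL\bigr)^+\,dx$. You are right that $(u-NL)^+$ is subharmonic and $1$-periodic in $x$, so $g$ is convex on $(-\rho_0,\rho_0)$. But convexity of $g$ together with $g(0)\le C(\log N)^{C_0}$ and the crude bound $g\le C_V N$ gives \emph{no} useful control on $g(y)$ for $y\ne 0$: a bound at a single interior point never propagates outward through convexity. For instance $g(y)=(\log N)^{C_0}+C_V N\,(y/\rho_0)^2$ is convex, satisfies both hypotheses, and yet $g(\pm\rho_0/2)\sim C_V N/4$, which would feed an $O(N)$ error into your disk average. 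The harmless ``enlarge $C_0$'' loss you anticipate would actually be a loss of order $N$ in the exponent, which is fatal. The real mechanism behind \cite[Prop.~4.3]{GolSch08} is a Riesz-representation / Cartan-type splitting $u=\int\log|z-\zeta|\,d\mu(\zeta)+h$ on a disk around $x_0$, where the total Riesz mass and the harmonic part are controlled by the a priori bounds, and the logarithmic potential is bounded above by elementary geometry of the kernel; the LDT then only needs to be used on the real slice. That is a qualitatively different argument from slice-integral convexity, and your proposal should be revised to reflect it (or, as the paper does, to simply invoke \cite{GolSch08} without a sketch).
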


Wegner's estimate now follows easily.

\begin{propAlpha}\label{prop:Wegner}
    Let $\omega\in \mathbb{T}_{c,a}$, $ E\in \R $ be such that
    $L(E,\omega)\ge\gamma > 0$. There exists $C_0=C_0(V,c,a,\gamma)$ such that for $H > (\log N)^{2C_0}$ and
    $ N\ge 2 $, one has
    \begin{equation}\label{eq:2.wegn}
    	\mes \bigl\{x \in \tor\::\: \dist\bigl(\spec(H_N(x,\omega)), E\bigr) < \exp(-H)\bigr\}
    	\les \exp \bigl(-H/(\log N)^{C_0}\bigr).
    \end{equation}
    Moreover, the set on the left-hand side is
    contained in the union of  $\les N$ intervals, each of which does not
    exceed the bound stated in~\eqref{eq:2.wegn} in measure.
\end{propAlpha}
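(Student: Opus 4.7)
The plan is to deduce \cref{prop:Wegner} from the large deviation estimate \cref{prop:ldt} by converting the smallness of $\dist(\spec(H_N(x,\omega)), E)$ into a strong lower bound on $NL(\omega,E) - \log|f_N(x,\omega,E)|$. The naive factorization $|f_N| = \prod_j |E - E_j^{(N)}(x,\omega)|$ combined with the trivial pointwise bound on each factor would cost an additive $O(N)$ term, which is fatal: it would force $H \gtrsim N$ rather than the advertised $H > (\log N)^{2C_0}$. The key idea is to sharpen this comparison through the Green's function.

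First I would observe that for $E \notin \spec(H_N(x,\omega))$ one has $\norm{(H_N(x,\omega) - E)^{-1}} = 1/\dist(\spec(H_N(x,\omega)), E)$; since the operator norm of an $N \times N$ matrix is bounded by $N$ times its largest entry, there exist indices $1 \le k_0 \le l_0 \le N$ with $|G_N(k_0, l_0)| \ge 1/(N\,\dist(\spec(H_N(x,\omega)), E))$, where $G_N := (H_N(x,\omega) - E)^{-1}$. Cramer's rule for the resolvent of a Jacobi matrix yields, for $k_0 \le l_0$,
\[
|G_N(k_0, l_0)| = \frac{|f_{[1,k_0-1]}(x,\omega,E)|\cdot|f_{[l_0+1,N]}(x,\omega,E)|}{|f_N(x,\omega,E)|},
\]
with the convention that empty-interval determinants equal $1$. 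Since $(k_0 - 1) + (N - l_0) \le N - 1$, \cref{prop:uniform-upper-bound} applied to each factor in the numerator yields the bound $\exp\bigl((N - 1) L(\omega, E) + C(\log N)^{C_0}\bigr)$. Combining these displays with the hypothesis $\dist(\spec(H_N(x,\omega)), E) < \exp(-H)$ gives
\[
NL(\omega, E) - \log|f_N(x,\omega,E)| \ge H - C'(\log N)^{C_0}.
\]

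For $H > 2C'(\log N)^{C_0}$, which after enlarging $C_0$ is implied by $H > (\log N)^{2C_0}$, the right-hand side exceeds $H/2 > (\log N)^{C_0}$. The Wegner bad set is therefore contained in the exceptional set of \cref{prop:ldt} at level $H/2$, giving the measure bound $\les \exp(-H/(2(\log N)^{C_0}))$; renaming $C_0$ produces the claimed estimate. The containment in $\les N$ intervals is inherited directly from the analogous structural clause of \cref{prop:ldt}. The main obstacle to overcome is precisely the $C_V^{N-1}$ factor from the naive pointwise product bound; the Cramer identity replaces it with $\exp((N-1)L + C(\log N)^{C_0})$, which matches the typical size of $|f_N|$ up to subpolynomial corrections, and this is what enables the mild Wegner threshold $H \gtrsim (\log N)^{2C_0}$.
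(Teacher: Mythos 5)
Your proposal is correct and follows essentially the same route as the paper: Cramer's rule expresses the resolvent entries as ratios of Dirichlet determinants, \cref{prop:uniform-upper-bound} bounds the numerators, the distance-to-spectrum identity $\dist(\spec(H_N), E) = \|(H_N - E)^{-1}\|^{-1}$ converts the hypothesis into a lower bound on $NL(\omega,E) - \log|f_N|$, and \cref{prop:ldt} finishes. The only cosmetic difference is that you isolate a single maximal resolvent entry before invoking Cramer, whereas the paper bounds all entries uniformly and sums, but this changes nothing of substance.
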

\begin{proof}
    By Cramer's rule,
    \begin{equation*}
       \bigl|\bigl(H_N(x, \omega) - E\bigr)^{-1} (k, m)\bigr| = {\big
    	|f_{[1, k-1]}\bigl(x, \omega, E\bigr)\big |\, \bigl| f_{[m+1, N]}
    	\bigl(x, \omega, E \bigr)| \over \big | f_N\bigl(x, \omega,E\bigr)\big |}.
    \end{equation*}
    By \cref{prop:uniform-upper-bound},
    \begin{equation*}
	    \nn \log \big | f_{[1, k-1]}\bigl(x), \omega, E\bigr)\big | + \log
    	\big |f_{[m+1, N]}\bigl(x, \omega, E\bigr)\big | \le NL(\omega,E)
	    + C(\log N)^{C_0}.
    \end{equation*}
    for any $x \in \tor$.  Therefore,
    \begin{equation*}
	     \big\| \bigl(H_N(x, \omega) - E\bigr)^{-1} \big\| \le N^2\
	    {\exp\bigl(NL(\omega,E) + C(\log N)^{C_0}\bigr)\over \big
    	|f_N\bigl(x, \omega, E\bigr)\big|}
    \end{equation*}
    for any $x \in \tor$.  Since
    $$
    	\dist\bigl(\spec\bigl(H_N(x,\omega)\bigr),E\bigr)
    	= \big\| \bigl(H_N(x,\omega) -E\bigr)^{-1}\big\|^{-1}\ ,
    $$
    the lemma follows from \cref{prop:ldt}.
\end{proof}

The following result is an immediate consequence of the Wegner estimate \eqref{eq:2.wegn} and the continuity of the functions $E_j^{(N)} (x, \omega)$.

\begin{corollary}\label{cor:size-of-spectral-segment}
	Let $\omega\in \mathbb{T}_{c,a}$ and assume
    $L(E,\omega)\ge\gamma > 0$ for $ E\in(E',E'') $. There exists $C_0=C_0(V,c,a,\gamma)$ such that
    for $H > (\log N)^{2C_0}$ and $ N\ge 2 $, one has that if $ I $ is an interval satisfying
    \begin{equation*}
    	|I|\ge \exp(-H/(\log N)^{C_0})\text{ and }E_j^{[-N,N]}(I,\omega)\subset(E',E''),
    \end{equation*}
    then
    \begin{equation*}
    	|E_j^{[-N,N]}(I,\omega)|\ge 2\exp(-H).
    \end{equation*}
\end{corollary}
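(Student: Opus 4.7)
The plan is to argue by contradiction. Suppose that $|E_j^{[-N,N]}(I,\omega)| < 2\exp(-H)$. Since eigenvalues of Hermitian matrices depend continuously on parameters, the Rellich branch $x \mapsto E_j^{[-N,N]}(x,\omega)$ is continuous, so its image over the connected set $I$ is itself a connected subset of $\R$, i.e.\ an interval of length $< 2\exp(-H)$. Let $E_0$ be the midpoint of this image. By the standing hypothesis $E_j^{[-N,N]}(I,\omega)\subset (E',E'')$ we have $E_0\in (E',E'')$, so $L(\omega,E_0)\ge \gamma$ and \cref{prop:Wegner} applies at the energy $E_0$.

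For every $x\in I$ we then have $|E_j^{[-N,N]}(x,\omega)-E_0|<\exp(-H)$, hence
\[
\dist\bigl(\spec H_{[-N,N]}(x,\omega), E_0\bigr) < \exp(-H).
\]
Thus $I$ is contained in the exceptional set controlled by the Wegner estimate \cref{prop:Wegner} applied to $H_{[-N,N]}$, which is just a translate of $H_{2N+1}$ via \eqref{eq:1.Dirdet'}. Letting $C_0'$ denote the constant produced by \cref{prop:Wegner} (with the size $2N+1$ absorbed by adjusting constants), we conclude
\[
|I| \;\lesssim\; \exp\bigl(-H/(\log N)^{C_0'}\bigr).
\]

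To finish, one just has to choose the constant $C_0$ appearing in the corollary strictly larger than Wegner's $C_0'$ (for instance $C_0:=C_0'+1$). The hypothesis $H>(\log N)^{2C_0}$ then gives, for $N$ large,
\[
H\Bigl(\tfrac{1}{(\log N)^{C_0'}}-\tfrac{1}{(\log N)^{C_0}}\Bigr) \;\gtrsim\; (\log N)^{C_0'+2},
\]
which dominates the logarithm of the implicit Wegner constant, so
\[
\exp\bigl(-H/(\log N)^{C_0}\bigr) \;\gg\; \exp\bigl(-H/(\log N)^{C_0'}\bigr) \;\gtrsim\; |I|,
\]
contradicting the assumption $|I|\ge \exp(-H/(\log N)^{C_0})$.

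There is no real obstacle here beyond careful bookkeeping of the two constants and the passage from $H_N$ to $H_{[-N,N]}$; the content is entirely encapsulated in the continuity of the Rellich branches together with \cref{prop:Wegner}, which is exactly why the result is stated as an immediate corollary.
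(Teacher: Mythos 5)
Your proof is correct and takes exactly the approach the paper intends: the paper declares the corollary an ``immediate consequence of the Wegner estimate and the continuity of the functions $E_j^{(N)}(x,\omega)$,'' and your contradiction argument (continuity gives a short image interval, so $I$ lands in the Wegner exceptional set around its midpoint $E_0$, violating the assumed lower bound on $|I|$ once $C_0$ is taken strictly larger than Wegner's exponent) is precisely that. The only points to be slightly careful about, which you already flag, are absorbing the passage from $H_N$ to $H_{[-N,N]}=H_{2N+1}$ and the implicit multiplicative constant in \cref{prop:Wegner} into the choice of $C_0$ and a threshold on $N$.
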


In the following remark, $a\sim b$ for $a,b>0$ means that these number are comparable
up to fixed multiplicative constants (say, within a factor of~$2$). Moreover, $a\gg b$ means that $\frac{a}{b}\ge C$ for
some large constant $C$.

\begin{remark}\label{rem:Wegner}\upshape
The Wegner estimate is a fundamental tool which has been applied to the problem of localization of eigenfunctions in both the quasi-periodic and the random settings. For the problem under consideration here, namely the homogeneous nature of the spectrum, our reading of Wegner's estimate
is as follows. Let $E\in\mathbb{R}$ be arbitrary and recall the eigenvalues as defined in~\eqref{eq:Ej}. Assume that
\begin{equation}
\label{eq:2.wegn11}
|E- E_j^{(N)} (x_0, \omega)|\leq \exp(-(\log N)^A)
\end{equation}
for some $N$ and $x_0$ and $A\gg 1$. Then, with $\sigma$ calibrated against $N$ such that
\begin{equation} \label{eq:1homogeneous1234}
\sigma\sim \exp(-(\log N)^A),
\end{equation}
the intersection
\begin{equation} \label{eq:1homogeneous125}
(E-\sigma,E+\sigma)\cap
\{E_j^{(N)} (x, \omega): x\in (x_0-\exp(-(\log N)^B), x_0+\exp(-(\log N)^B))\},
\end{equation}
$B:=A/2$, contains an interval $\mathcal{I}_E$ with
\begin{equation}
\label{eq:2.wegn13}
\big|\mathcal{I}_E\big|\ge \mathbf{w}_N:=\exp(-(\log N)^A)\sim\sigma.
\end{equation}
Note that this is a special case of the previous corollary, using the largest possible values of~$H$.
\end{remark}

The next two results address the relation between the distance of an energy to the spectrum and the large deviation estimate from Proposition~\ref{prop:ldt}. Recall that for any $x_0,x$, one has
\begin{equation}\label{eq:Hx-vs-Hx0}
	|E_{j}^{(N)} (x,\omega)-E_{j}^{(N)} (x_0,\omega)|
	\le \norm{H_N(x,\omega)-H_N(x_0,\omega)}\le C(V)|x-x_0|.
\end{equation}

\begin{lemma}\label{lem:spec-to-ldt}
    Let $\omega\in \T_{c,a}$, $ E\in \R $ be such that $L(E,\omega)\ge\gamma > 0$ and let $ x\in \T $.
    There exist $ N_0(V,c,a,\gamma,E) $, $ C_0(V,c,a,\gamma,E) $ such that for any $ N\ge N_0 $, we have that
    if $$\dist(E,\spec(H_N(x,\omega))\ge\exp(-K),$$ $ K\gg 1 $, then
    \begin{equation*}
    	\log|f_N(x,\omega,E)|\ge NL(\omega,E)-K(\log N)^{C_0}.
    \end{equation*}
\end{lemma}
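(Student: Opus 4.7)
The plan is to exploit the hypothesis $ \dist(E,\spec H_N(x,\omega))\ge \exp(-K) $ to produce a small complex disk around $ x $ on which $ f_N(\cdot,\omega,E) $ is zero-free and hence $ \log|f_N| $ is harmonic, and then to transfer a good lower bound at a nearby real companion point (supplied by \cref{prop:ldt}) back to $ x $ via Harnack's inequality.

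First I would set $ r_1:=\exp(-K)/(2C(V)) $, with $ C(V) $ the constant from \cref{eq:Hx-vs-Hx0}, and check that $ f_N(\cdot,\omega,E) $ has no zeros on the complex disk $ D(x,r_1)\subset\C $. For $ z\in D(x,r_1) $, analyticity of $ V $ in a strip yields
\[
\norm{H_N(z,\omega)-H_N(x,\omega)}\le C(V)|z-x|\le\exp(-K)/2,
\]
so by the Bauer--Fike theorem applied to the Hermitian matrix $ H_N(x,\omega) $, every root in $ \tilde E $ of $ \det(H_N(z,\omega)-\tilde E) $ lies within $ \exp(-K)/2 $ of some $ E_j^{(N)}(x,\omega) $; combined with the hypothesis this forces $ f_N(z,\omega,E)\ne 0 $ throughout $ D(x,r_1) $. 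Next, with $ C_0 $ from \cref{prop:ldt} and $ H:=2K(\log N)^{C_0} $, the bad set $ \{y\in\T:\log|f_N(y,\omega,E)|<NL(\omega,E)-H\} $ has measure at most $ C\exp(-2K) $, which is smaller than $ r_1/2 $ for $ K $ and $ N $ large. Hence I can choose a companion point $ x_0\in(x-r_1/4,x+r_1/4) $ satisfying $ \log|f_N(x_0,\omega,E)|\ge NL(\omega,E)-H $.

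For the last step, $ u(z):=\log|f_N(z,\omega,E)| $ is harmonic on $ D(x,r_1) $. Invoking the complex-strip version of \cref{prop:uniform-upper-bound} (standard in the \cite{GolSch08} framework, valid since $ D(x,r_1) $ fits inside the strip of analyticity of $ V $ once $ K $ is large), one obtains $ u\le M:=NL(\omega,E)+C(\log N)^{C_0} $ on $ D(x,r_1) $. Then $ M-u\ge 0 $ is harmonic on $ D(x,r_1) $, and Harnack's inequality at $ x_0 $ with $ s:=|x_0-x|\le r_1/4 $ gives
\[
M-u(x)\le \frac{r_1+s}{r_1-s}\bigl(M-u(x_0)\bigr)\le \tfrac{5}{3}(C+2K)(\log N)^{C_0},
\]
which for $ K\ge C $ yields $ u(x)\ge NL(\omega,E)-5K(\log N)^{C_0} $; absorbing the factor $ 5 $ by enlarging $ C_0 $ by $ 1 $ gives the claimed bound. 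The main obstacle is the complex-domain version of \cref{prop:uniform-upper-bound} required in Step 3: a purely real-line upper bound would degrade the Harnack estimate to an $ O(N) $ correction. This complex extension is a standard consequence of the subharmonic BMO estimates underpinning \cref{prop:uniform-upper-bound}, but one must take $ K $ large enough (depending on the analyticity width of $ V $) for the Harnack disk to fit inside the strip.
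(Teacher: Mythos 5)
Your proof takes a genuinely different route from the paper's. The paper uses the explicit factorization $f_N(x,\omega,E)=\prod_j\bigl(E_j^{(N)}(x,\omega)-E\bigr)$: it finds a companion point $x'$ with $|x'-x|<\exp(-K\log N)$ where \cref{prop:ldt} gives a good lower bound, and then compares $\log|f_N(x',\omega,E)|$ with $\log|f_N(x,\omega,E)|$ termwise. The key estimate is $|E_j^{(N)}(x',\omega)-E_j^{(N)}(x,\omega)|\cdot|E_j^{(N)}(x,\omega)-E|^{-1}\le C(V)\exp(-K\log N)\,e^{K}<1/2$, which after summing the $N$ logarithmic increments gives a total discrepancy $\le 2N\exp(-(K\log N)/2)<1$. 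This is a purely real-variable argument; no complex extension of the potential or the determinant is needed.

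Beyond the gap you flag yourself, there is a quantitative obstruction you seem to underestimate. Even granting a complex-strip version of \cref{prop:uniform-upper-bound}, the natural statement is $\log|f_N(z,\omega,E)|\le N L(\omega,E,\Im z)+C(\log N)^{C_0}$, where $L(\omega,E,y)$ is the Lyapunov exponent of the complexified cocycle. In the regime of positive Lyapunov exponent the acceleration (the slope of $y\mapsto L(\omega,E,y)$ at $0^{+}$) is a positive integer, so $L(\omega,E,y)-L(\omega,E,0)\gtrsim |y|$ for small $|y|$; for the almost Mathieu operator with $|\lambda|>1$ the slope is $2\pi$. With your disk radius $r_1\sim\exp(-K)$, the extra term in the complex upper bound is of size $\gtrsim N\exp(-K)$, and Harnack transfers this entire additive excess into your lower bound at $x$. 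This is negligible only when $K\gtrsim\log N$; for the full claimed range $K\gg 1$ (which includes $K$ of order a fixed large constant while $N\to\infty$) the correction $N\exp(-K)$ swamps the target error $K(\log N)^{C_0}$ and the argument fails.

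One can try to repair this by shrinking the disk radius to $\exp(-K\log N)$, which kills the spurious $N\exp(-K\log N)\le 1$ term; but at that scale the eigenvalues of $H_N(z,\omega)$ move by at most $C(V)\exp(-K\log N)$ while staying a distance $\ge\exp(-K)$ from $E$, so the product comparison already gives $\bigl|\log|f_N(z)|-\log|f_N(x)|\bigr|\le 2N\exp(-K(\log N-1))<1$ directly — which is exactly the paper's argument, and the Bauer--Fike/Harnack scaffolding becomes superfluous. In short: the complex-analytic route either fails quantitatively (at your radius) or collapses into the paper's elementary real-variable computation (at the correct radius), so you should abandon the Harnack step in favour of the direct product comparison.
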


\begin{proof}
    Due to Proposition \ref{prop:ldt} there exists $ x' $ such that $ |x'-x|<\exp(-K\log N) $ and
    \begin{equation*}
    	\log|f_N(x',\omega,E)|>NL(\omega,E)-K(\log N)^C.
	\end{equation*}
    Due to \eqref{eq:Hx-vs-Hx0} and our assumption on $ E $, we obtain
    \begin{multline*}
    	|E_{j}^{(N)} (x',\omega)-E_{j}^{(N)} (x,\omega)||E_{j}^{(N)} (x,\omega)-E|^{-1}
     	\le C(V)|x'-x||E_{j}^{(N)} (x,\omega)-E|^{-1}\\
     	\le \exp(-(K\log N)/2)<1/2,
    \end{multline*}
    and therefore
    \begin{multline*}
    	\big|\log \big|E-E^{(N)}_j(x',\omega)\big|-\log \big|E-E^{(N)}_j(x,\omega)\big|\big|
     	\le 2|E_{j}^{(N)} (x',\omega)-E_{j}^{(N)} (x,\omega)||E_{j}^{(N)} (x,\omega)-E|^{-1}\\
     	\le 2\exp(-(K\log N)/2),	
    \end{multline*}
    \begin{equation*}
	     \big|\log \big |f_N\bigl(x',\omega,E\bigr)\big |-\log \big |f_N\bigl(x,\omega,E\bigr)\big |\big|
	     \le 2N\exp(-(K\log N)/2)<1.
    \end{equation*}
    This yields the desired conclusion.
\end{proof}

The usefulness of a lower bound on the determinant as in the previous lemma can be seen from the following result.

\begin{lemma}[{\cite[Lem. 6.1]{GolSch11}}]\label{lem:ldt-to-decay}
	Let $ \omega\in \T_{c,a} $, $ E\in\R $, $ L(\omega,E)>\gamma>0 $, and $ N\ge N_0(V,a,c,\gamma,E) $.
	Furthermore, assume that
	\begin{equation*}
		\log|f_N(x,\omega,E)|>NL(\omega,E)-K/2
	\end{equation*}
	for some $ x\in \T $, $ K>(\log N)^{C_0} $. Then
	\begin{equation*}
		\left| (H_N(x,\omega)-E)^{-1}(j,k) \right|
		\le \exp(-\gamma|j-k|+K),
	\end{equation*}
	\begin{equation*}
		\norm{(H_N(x,\omega)-E)^{-1}}\le \exp(K).
	\end{equation*}
	In particular we have $ \dist(E,\spec H_N(x,\omega))\ge \exp(-K) $.
\end{lemma}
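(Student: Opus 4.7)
The plan is to derive everything from Cramer's rule together with the uniform upper bound of Proposition~\ref{prop:uniform-upper-bound}. Assuming $j\le k$ without loss of generality, Cramer's rule gives
\begin{equation*}
\bigl|(H_N(x,\omega)-E)^{-1}(j,k)\bigr|
= \frac{|f_{[1,j-1]}(x,\omega,E)|\,|f_{[k+1,N]}(x,\omega,E)|}{|f_N(x,\omega,E)|},
\end{equation*}
with the usual convention $f_\emptyset\equiv 1$. Applying Proposition~\ref{prop:uniform-upper-bound} to each of the two numerator factors (note that $L(\omega,E)\ge\gamma/2$ holds on a neighborhood of $E$ by Proposition~\ref{prop:Lyapunov-continuity}, so the hypothesis of that proposition is satisfied with a constant $C_0$ depending only on $V,c,a,\gamma$), I bound the logarithm of the numerator by
\begin{equation*}
(j-1)L(\omega,E)+(N-k)L(\omega,E)+2C(\log N)^{C_0}
= \bigl(N-(k-j+1)\bigr)L(\omega,E)+2C(\log N)^{C_0}.
\end{equation*}

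Combining with the hypothesis $\log|f_N(x,\omega,E)|>NL(\omega,E)-K/2$, I obtain
\begin{equation*}
\log\bigl|(H_N(x,\omega)-E)^{-1}(j,k)\bigr|
\le -(k-j+1)L(\omega,E)+2C(\log N)^{C_0}+K/2.
\end{equation*}
Since $L(\omega,E)>\gamma$ and, by the assumption $K>(\log N)^{C_0}$ together with choosing $N_0$ large enough, we have $2C(\log N)^{C_0}\le K/2$, this yields
\begin{equation*}
\bigl|(H_N(x,\omega)-E)^{-1}(j,k)\bigr|
\le \exp\bigl(-\gamma|j-k|+K\bigr),
\end{equation*}
which is the off-diagonal decay estimate (the case $j>k$ is symmetric).

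For the operator norm bound I use the Schur test: the Hilbert--Schmidt or row-sum estimate gives
\begin{equation*}
\bigl\|(H_N(x,\omega)-E)^{-1}\bigr\|
\le \max_j \sum_k \bigl|(H_N(x,\omega)-E)^{-1}(j,k)\bigr|
\le \sum_{m\in\mathbb{Z}}\exp(-\gamma|m|)\exp(K),
\end{equation*}
and the geometric series factor is bounded by a constant $C(\gamma)$, which is absorbed into $\exp(K)$ by again adjusting $N_0$ so that $K\ge (\log N_0)^{C_0}$ dominates $\log C(\gamma)$. The bound on the distance to the spectrum follows immediately from
\begin{equation*}
\dist(E,\spec H_N(x,\omega))=\bigl\|(H_N(x,\omega)-E)^{-1}\bigr\|^{-1}.
\end{equation*}

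There is no real obstacle here: the entire content is that the ratio in Cramer's rule has an \emph{upper} bound on the numerator (subharmonicity/Proposition~\ref{prop:uniform-upper-bound}) and a \emph{lower} bound on the denominator (the hypothesis). The mild bookkeeping point is making sure the error term $2C(\log N)^{C_0}$ is swallowed by $K/2$, which is precisely why the hypothesis $K>(\log N)^{C_0}$ (with sufficiently large implicit constant, i.e.\ $N\ge N_0$) is built into the statement.
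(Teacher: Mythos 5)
Your proposal is correct and follows essentially the same route as the paper: the paper's own proof is the one-line remark ``Apply Cramer's rule as in the proof of Wegner's estimate,'' and what you have written out — Cramer's rule, the subharmonic uniform upper bound from Proposition~\ref{prop:uniform-upper-bound} on the numerator factors, the hypothesized lower bound on the denominator, and then absorbing the $2C(\log N)^{C_0}$ error into $K/2$ — is exactly the content of that reference, with the norm bound obtained by a Schur test instead of the paper's $N^2$ entry-count bound, an immaterial difference.
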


\begin{proof}
Apply Cramer's rule as in the proof of Wegner's estimate.
\end{proof}

We will use the following immediate consequence of \cref{lem:spec-to-ldt,lem:ldt-to-decay}.

\begin{lemma}\label{lem:spec-to-decay}
	Let $ \omega\in\T_{c,a} $, $ E\in \R $ be such that $ L(\omega,E)\ge \gamma>0 $ and let $ x\in \T $. There exist
	$ N_0(V,c,a,\gamma,E) $ and $ C_0(V,c,a,\gamma,E) $ such that for any $ N\ge N_0 $, we have that if
	$$ \dist(E,\spec H_N(x,\omega))\ge \exp(-K), $$
	$ K\gg 1 $, then
	\begin{equation*}
		\left| (H_N(x,\omega)-E)^{-1}(j,k) \right|\le \exp(-\gamma|j-k|+2K(\log N)^{C_0}).
	\end{equation*}
\end{lemma}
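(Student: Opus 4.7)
The plan is simply to chain \cref{lem:spec-to-ldt} into \cref{lem:ldt-to-decay}. Let me denote by $C_1=C_1(V,c,a,\gamma,E)$ the constant produced by \cref{lem:spec-to-ldt} and by $C_2=C_2(V,a,c,\gamma,E)$ the constant $C_0$ appearing in \cref{lem:ldt-to-decay}. Likewise, let $N_1,N_2$ denote the corresponding thresholds on $N$. I will show that the conclusion holds with $C_0:=\max(C_1,C_2)$ and $N_0:=\max(N_1,N_2)$.

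First, since $\dist(E,\spec H_N(x,\omega))\ge \exp(-K)$ and $K\gg 1$, \cref{lem:spec-to-ldt} applies (for $N\ge N_1$) and yields
\begin{equation*}
\log|f_N(x,\omega,E)|\ge NL(\omega,E)-K(\log N)^{C_1}\ge NL(\omega,E)-K(\log N)^{C_0}.
\end{equation*}
Now set $K':=2K(\log N)^{C_0}$. The previous inequality rewrites as
\begin{equation*}
\log|f_N(x,\omega,E)|\ge NL(\omega,E)-K'/2,
\end{equation*}
which is exactly the hypothesis of \cref{lem:ldt-to-decay}. Moreover, since $K\gg 1$, one has $K'\ge 2(\log N)^{C_0}\ge (\log N)^{C_2}$, so the condition $K'>(\log N)^{C_2}$ is satisfied.

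Applying \cref{lem:ldt-to-decay} with $K'$ in place of $K$ (for $N\ge N_2$) therefore gives
\begin{equation*}
\left|(H_N(x,\omega)-E)^{-1}(j,k)\right|\le \exp\bigl(-\gamma|j-k|+K'\bigr)=\exp\bigl(-\gamma|j-k|+2K(\log N)^{C_0}\bigr),
\end{equation*}
which is the desired estimate. There is no real obstacle here; the only point worth checking is that the exponent $C_0$ from \cref{lem:spec-to-ldt} and the one from \cref{lem:ldt-to-decay} can be unified, which is handled by taking the maximum as above.
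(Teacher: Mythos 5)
Your proof is correct and is precisely the chaining of \cref{lem:spec-to-ldt} into \cref{lem:ldt-to-decay} that the paper itself has in mind when it calls the lemma an ``immediate consequence'' of those two results. The only cosmetic point is that \cref{lem:spec-to-ldt} yields a non-strict inequality while \cref{lem:ldt-to-decay} is stated with a strict one, but this is absorbed by a trivial adjustment of $K'$ (or of $C_0$) and does not affect the argument.
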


It is natural to link eigenfunctions of the finite volume operators to (generalized) eigenfunctions in infinite volume. The standard tool for this is the {\em Poisson formula:} for any solution of the difference equation $ H(x,\omega)\psi=E\psi $, we have
\beeq \label{eq:Poisson}
	\psi(m) = (H_{[a,b]}-E)^{-1}(m, a)\psi(a-1) + (H_{[a,b]}-E)^{-1}(m,b+1)\psi(b+1),\quad m \in [a, b].
\eneq
This identity was introduced into the theory of localized eigenfunctions in the fundamental work on the Anderson model by Fr\"ohlich and Spencer \cite{FroSpe83}. The Poisson formula tells us that the {\em decay of the Green function implies the decay of the eigenfunction wherever the Green function exists.} Lemmas~\ref{lem:ldt-to-decay}, \ref{lem:spec-to-decay} demonstrate how to effectively apply the Poisson formula in the regime of positive Lyapunov exponents, by being able to evaluate the decay of the Green function $(H_{[a,b]}-E)^{-1}(m,n)$ in terms of~$|m-n|$. Lemma~\ref{lem:ldt-to-decay} explains how the large deviation estimate from \cref{prop:ldt} can be used to guarantee the conditions
of Lemma~\ref{lem:Poisson}. This leads to the following \textbf{localization principle}: {\em the eigenfunction $\psi_j^{[a,b]}$ defined by
\[
H_{[a,b]}(x,\omega)\psi_j^{[a,b]}(x,\omega)=E_j^{[a,b]} (x, \omega)\psi_j^{[a,b]}(x,\omega)
\]
decays exponentially on any subinterval $[c,d]\subset [a,b]$ for which the large deviation estimate
\begin{equation}
\label{eq:2.locprinciple1}
\log \big | f_{[c,d]}(x,\omega,E_j^{[a,b]} (x, \omega)) \big | > (c-d)L(\omega,E_j^{[a,b]} (x, \omega)) - (c-d)^{1-\delta}
\end{equation}
is valid. }
This is of crucial importance to the theory of localization, and we shall make this precise later.

The following elementary observation links the spectra in finite volume to the decay of the Green function.

\begin{lemma}\label{lem:Poisson}
	Let $ x,\omega\in\T $, $ E\in\R $, and $ [a,b]\subset \Z $. If for any $ m\in[a,b] $, there exists
	$ \Lambda_m=[a_m,b_m]\subset[a,b] $ containing $m$ such that
	\begin{equation*}
		(1- \langle \delta_a,\delta_{a_m}\rangle) \left| (H_{\Lambda_m}(x,\omega)-E)^{-1}(a_m,m) \right|
		+(1- \langle \delta_b,\delta_{b_m}\rangle) \left| (H_{\Lambda_m}(x,\omega)-E)^{-1}(b_m,m) \right|
		<1,
	\end{equation*}
	then $ E\notin \spec H_{[a,b]}(x,\omega) $.
\end{lemma}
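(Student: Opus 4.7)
The plan is a textbook contradiction argument exploiting the Poisson formula \cref{eq:Poisson}. Suppose $E\in\spec H_{[a,b]}(x,\omega)$ and let $\psi$ be a corresponding Dirichlet eigenvector. Setting $\psi(a-1)=\psi(b+1)=0$ turns the eigenvalue equation $H_{[a,b]}\psi=E\psi$ into the genuine three-term recursion $H(x,\omega)\psi=E\psi$ at every site of $[a,b]$, so the Poisson identity may be applied on any subinterval of $[a,b]$.

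Fix $m\in[a,b]$, take $\Lambda_m=[a_m,b_m]\subset[a,b]$ guaranteed by the hypothesis, and apply the Poisson formula on $\Lambda_m$:
\begin{equation*}
\psi(m)=(H_{\Lambda_m}-E)^{-1}(m,a_m)\,\psi(a_m-1)+(H_{\Lambda_m}-E)^{-1}(m,b_m)\,\psi(b_m+1).
\end{equation*}
Let $M:=\max_{n\in[a,b]}|\psi(n)|>0$. The Kronecker factors in the hypothesis encode the Dirichlet condition: if $a_m=a$ then $\psi(a_m-1)=\psi(a-1)=0$, matching the vanishing of $1-\la\delta_a,\delta_{a_m}\ra$; otherwise $a_m-1\in[a,b]$, so $|\psi(a_m-1)|\le M$. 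The analogous dichotomy holds at the right endpoint. Using symmetry of the Green function, $(H_{\Lambda_m}-E)^{-1}(m,a_m)=(H_{\Lambda_m}-E)^{-1}(a_m,m)$, the hypothesis then forces $|\psi(m)|<M$ for every $m\in[a,b]$.

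Choosing $m^\star$ with $|\psi(m^\star)|=M$ gives the contradiction $M<M$, so $E\notin\spec H_{[a,b]}(x,\omega)$. There is no serious obstacle; the entire content of the lemma is to package a maximum-principle consequence of the Poisson identity into a form ready to be combined with the Green-function decay estimates produced by \cref{lem:ldt-to-decay} on each window $\Lambda_m$.
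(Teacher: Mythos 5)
Your proposal is correct and follows essentially the same maximum-principle contradiction used in the paper: assume an eigenvector exists, apply Poisson on each window $\Lambda_m$, note the Kronecker factors in the hypothesis exactly absorb the Dirichlet vanishing of the extended $\psi$ at $a-1$ and $b+1$, invoke the symmetry of the Green function, and obtain a strict self-contradiction at a maximizing site. Your version is marginally more explicit (stating $|\psi(m)|<M$ for every $m$ before localizing to the maximizer, and spelling out that the zero extension turns the Dirichlet eigenvector into a solution of the full recursion on $[a,b]$), but the content is identical to the paper's argument.
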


\begin{proof}
	Assume to the contrary that $ E\in \spec H_{[a,b]}(x,\omega) $ and let $ \psi $ be a corresponding
	eigenvector.  Let $ m\in [a,b] $ be such that $ |\psi(m)|=\max_n |\psi(n)|  $. The hypothesis
	together with the Poisson formula \cref{eq:Poisson} gives us that
	$ |\psi(m)|<\max(|\psi(a_m)|,|\psi(b_m)|) $ if $ a_m\neq a $ and $ b_m\neq b $,
	$ |\psi(m)|<|\psi(b_m)| $ if $ a_m=a $, and $ |\psi(m)|<|\psi(a_m)| $ if $ b_m=b $. In either case
	we reach a contradiction, so we must have $ E\notin \spec H_{[a,b]}(x,\omega) $.
\end{proof}

We use \cref{lem:Poisson} to establish our criterion for an energy to be in the spectrum. For this we will
also use the following well-known fact.

\begin{lemma}\label{lem:akbk}
	If for some $ x,\omega\in \T $, $ E\in \R $ there exist $ \delta>0 $ and sequences $ a_k\to-\infty $,
	$ b_k\to \infty $ such that
	\begin{equation*}
		\dist(E,\spec H_{[a_k,b_k]}(x,\omega))\ge \delta,
	\end{equation*}
	then
	\begin{equation*}
		\dist(E,\spec H(x,\omega))\ge \delta.
	\end{equation*}
\end{lemma}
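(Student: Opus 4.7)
The plan is to establish the Weyl-type lower bound
\[
\|(H(x,\omega)-E)\varphi\|_{\ell^2(\Z)} \ge \delta\,\|\varphi\|_{\ell^2(\Z)}
\]
for every $\varphi$ in the dense subspace of finitely supported vectors, and then to invoke the boundedness and self-adjointness of $H(x,\omega)$ to upgrade this to the resolvent bound $\|(H(x,\omega)-E)^{-1}\|\le 1/\delta$, which is precisely the desired conclusion $\dist(E,\spec H(x,\omega))\ge \delta$.

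First, take any finitely supported $\varphi\in \ell^2(\Z)$ and choose $k$ so large that $\mathrm{supp}\,\varphi\subset [a_k+1,b_k-1]$. Let $\tilde\varphi$ denote the natural restriction of $\varphi$ to $\ell^2([a_k,b_k])$. A direct inspection of the Jacobi action at each site $n\in[a_k,b_k]$ shows that $(H(x,\omega)\varphi)(n)=(H_{[a_k,b_k]}(x,\omega)\tilde\varphi)(n)$: the only sites where $H(x,\omega)$ and $H_{[a_k,b_k]}(x,\omega)$ act differently are the endpoints $n=a_k$ and $n=b_k$, where the former additionally couples to $\varphi(a_k-1)$ and $\varphi(b_k+1)$, both of which vanish by the strict containment of supports. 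For $n\notin[a_k,b_k]$, the same support condition forces $(H(x,\omega)\varphi)(n)=0$. Consequently
\[
\|(H(x,\omega)-E)\varphi\|_{\ell^2(\Z)} = \|(H_{[a_k,b_k]}(x,\omega)-E)\tilde\varphi\|_{\ell^2([a_k,b_k])}.
\]

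Second, the hypothesis $\dist(E,\spec H_{[a_k,b_k]}(x,\omega))\ge \delta$ is equivalent to saying that $H_{[a_k,b_k]}(x,\omega)-E$ is invertible on $\ell^2([a_k,b_k])$ with operator norm of the inverse bounded by $1/\delta$, so
\[
\|(H_{[a_k,b_k]}(x,\omega)-E)\tilde\varphi\|_{\ell^2([a_k,b_k])}\ge \delta\,\|\tilde\varphi\|_{\ell^2([a_k,b_k])}=\delta\,\|\varphi\|_{\ell^2(\Z)}.
\]
Combining the two displays yields $\|(H(x,\omega)-E)\varphi\|\ge \delta\|\varphi\|$ for every finitely supported $\varphi$, which then extends by density and boundedness of $H(x,\omega)$ to all $\varphi\in \ell^2(\Z)$. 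Self-adjointness of $H(x,\omega)$ converts this coercivity into the claimed resolvent bound, completing the proof.

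There is no substantive obstacle; the entire argument is a matter of matching the action of the whole-line operator with that of its Dirichlet restriction on vectors supported strictly inside $[a_k,b_k]$. The only point requiring care is insisting that $\mathrm{supp}\,\varphi\subset[a_k+1,b_k-1]$ rather than merely $[a_k,b_k]$, so that the hopping terms at the boundary which distinguish $H(x,\omega)$ from $H_{[a_k,b_k]}(x,\omega)$ are indeed zero and the two norms agree exactly.
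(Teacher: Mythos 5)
Your proof is correct and follows the same route as the paper: reduce to finitely supported $\varphi$, choose $k$ so large that the support lies strictly inside $[a_k,b_k]$ so that $H(x,\omega)$ and $H_{[a_k,b_k]}(x,\omega)$ agree on $\varphi$, apply the finite-volume spectral gap, and then extend by density. You have simply written out in more detail the endpoint bookkeeping that the paper's one-line proof leaves implicit.
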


\begin{proof}
	The hypothesis implies that for any $ \phi\in \ell^2(\Z) $ with finite support, there exists $ k $ such
	that
	\begin{equation*}
		\norm{(H(x,\omega)-E)\phi}=\norm{(H_{[a_k,b_k]}(x,\omega)-E)\phi}\ge \delta \norm{\phi}.
	\end{equation*}
	It follows by density that
	\begin{equation*}
		\norm{(H(x,\omega)-E)\phi}\ge\delta \norm{\phi}
	\end{equation*}
	for any $ \phi\in \ell^2(\Z) $, and this yields the conclusion.
\end{proof}

We can now formulate the \textbf{spectrum criterion} lemma. In the following two results, the notation $N^{1-}$ means $N^{1-\epsilon}$ for some small absolute $\epsilon>0$. For example $\epsilon=\frac{1}{100}$ will suffice (as in fact will large choices).

\begin{lemma}\label{lem:spectrum-criterion}
	Let $ \omega\in\T_{c,a} $, $ E\in \R $ be such that $ L(\omega,E)\ge \gamma>0 $. There exists
    $ N_0=N_0(V,c,a,\gamma) $ such that the following statement holds for any $ N\ge N_0 $.
    If for any $ x\in\T $, there exists $ r(x) \in [-N/2,N/2]$ such that
    \begin{equation*}
    \dist(E,\spec H_{r(x)+[-N,N]}(x,\omega))\ge \exp(-N^{1-}),
    \end{equation*}
    then
    \begin{equation*}
    	\dist(E,\cS_\omega)\ge \frac{1}{2}\exp(-N^{1-}).
    \end{equation*}
\end{lemma}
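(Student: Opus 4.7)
The plan is to apply Lemma~\ref{lem:akbk}: construct sequences $a_k \to -\infty$, $b_k \to +\infty$ such that $\dist(E', \spec H_{[a_k,b_k]}(x,\omega)) > 0$ for every $E'$ with $|E'-E| \leq \frac{1}{2}\exp(-N^{1-})$, and conclude by Lemma~\ref{lem:akbk} together with the fact that $\spec H(x,\omega) = \cS_\omega$ for irrational $\omega$. To exclude $E'$ from each finite-volume spectrum I will use Lemma~\ref{lem:Poisson} (the Poisson formula) combined with the Green's function decay coming from Lemma~\ref{lem:spec-to-decay}.

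Fix $x \in \T$. Using the covariance $H_I(x+j\omega,\omega)$ is unitarily conjugate to $H_{I+j}(x,\omega)$, the hypothesis applied at each shifted phase $x+j\omega$ yields, for every $j \in \Z$, a value $r_j := r(x+j\omega) \in [-N/2,N/2]$ so that the window $\Lambda_j := [j+r_j-N,\, j+r_j+N]$ satisfies
\[
\dist\bigl(E,\,\spec H_{\Lambda_j}(x,\omega)\bigr) \geq \exp(-N^{1-}).
\]
Lemma~\ref{lem:spec-to-decay} then produces the Green's function estimate
\[
\bigl|(H_{\Lambda_j}(x,\omega)-E)^{-1}(k,\ell)\bigr| \leq \exp\bigl(-\gamma|k-\ell| + 2N^{1-}(\log N)^{C_0}\bigr).
\]
Observe that $j \in \Lambda_j$ and $j$ lies at distance $\geq N/2$ from both endpoints of $\Lambda_j$.

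Choose $j_L \to -\infty$ and $j_R \to +\infty$, and set $a_k := j_L+r_{j_L}-N$, $b_k := j_R+r_{j_R}+N$. For each $m \in [a_k,b_k]$ I assign a window $\Lambda_m \subset [a_k,b_k]$ containing $m$ as follows. (i) If $m \in [a_k,\, a_k + 3N/2]$, take $\Lambda_m := \Lambda_{j_L}$; then $a_m = a_k$, so the first indicator in Lemma~\ref{lem:Poisson} vanishes, while $|b_m - m| \geq N/2$. (ii) If $m \in [b_k - 3N/2,\, b_k]$, take $\Lambda_m := \Lambda_{j_R}$ symmetrically. (iii) Otherwise, take $\Lambda_m$ to be the good window around $m$; since $\Lambda_m \subset [m-3N/2,\, m+3N/2] \subset [a_k,b_k]$ and both endpoints lie at distance $\geq N/2$ from $m$, the Poisson condition is needed on both sides. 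In each case every nonvanishing term in the Poisson condition is at most $\exp(-\gamma N/2 + 2N^{1-}(\log N)^{C_0})$, which is $\ll 1$ once $N \geq N_0(V,c,a,\gamma)$. The same argument runs verbatim for any $E'$ with $|E'-E| \leq \frac{1}{2}\exp(-N^{1-})$, since then $\dist(E',\spec H_{\Lambda_j}) \geq \frac{1}{2}\exp(-N^{1-})$ still triggers Lemma~\ref{lem:spec-to-decay} with a negligible loss in the constants. Hence $\dist(E,\spec H_{[a_k,b_k]}(x,\omega)) \geq \frac{1}{2}\exp(-N^{1-})$, and Lemma~\ref{lem:akbk} finishes the proof.

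The main obstacle is that points $m$ within $O(N)$ of the interval endpoints $a_k$, $b_k$ do not admit any good window of the form provided by the hypothesis that sits entirely inside $[a_k,b_k]$ with $m$ well-separated from both ends. The resolution is to calibrate $a_k$ and $b_k$ themselves to coincide with the left/right endpoint of a chosen good window $\Lambda_{j_L}$, $\Lambda_{j_R}$: this makes the corresponding indicator in Lemma~\ref{lem:Poisson} vanish, reducing the required estimate for boundary $m$ to a one-sided Green's function decay, which is controlled because the \emph{opposite} endpoint of $\Lambda_m$ is at distance $\geq N/2$ from $m$.
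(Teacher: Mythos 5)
Your proposal is correct and follows essentially the same approach as the paper's proof: shift the hypothesis along the orbit to obtain good windows $\Lambda_j = j + r(x+j\omega) + [-N,N]$, calibrate the endpoints $a_k, b_k$ of the large box to coincide with endpoints of boundary windows so that the Poisson condition in Lemma~\ref{lem:Poisson} degenerates to a one-sided bound there, use Lemma~\ref{lem:spec-to-decay} for the Green's function decay on all windows, and finish with Lemma~\ref{lem:akbk}. The paper's $p = -\bar N - N + r(x-\bar N\omega)$, $q = \bar N + N + r(x+\bar N\omega)$ are exactly your $a_k, b_k$ with $j_L = -\bar N$, $j_R = \bar N$, and the paper's breakpoint $N + [N/2]$ matches your $3N/2$ up to rounding. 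The only (minor) gloss in your write-up: when passing from $E$ to nearby $E'$, the paper explicitly invokes \cref{prop:Lyapunov-continuity} to guarantee $L(\omega,E') \ge \gamma/2$, which is the hypothesis Lemma~\ref{lem:spec-to-decay} needs at $E'$; your phrase ``negligible loss in the constants'' should be understood as including this step.
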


\begin{proof}
	Fix $ x\in \T $ and let $ \bar N\ge N $ be arbitrary. Let
	\begin{equation*}
		p=-\bar N-N+r(x-\bar N\omega), \ q=\bar N+N+r(x+\bar N\omega).
	\end{equation*}
	We will use \cref{lem:Poisson} to show that $ \tilde E\notin \spec H_{[p,q]}(x,\omega) $ for any $ |\tilde E-E|\le \exp(-N^{1-})/2 $. Note that, by \cref{prop:Lyapunov-continuity}, we have
	$ L(\tilde E,\omega)\ge\gamma/2 $. From the hypothesis we infer that
	\begin{equation*}
		\dist(\tilde E,\spec H_{[p,p+2N]})\ge \frac{1}{2}\exp(-N^{1-}).
	\end{equation*}
	It follows from \cref{lem:spec-to-decay}  that
	\begin{equation*}
		\left| (H_{[p,p+2N]}(x,\omega)-\tilde E)^{-1}(p+2N,m) \right|<1
	\end{equation*}
	for any $ m\in[p,p+N+[N/2]] $. Analogously one has
	\begin{equation*}
		\left| (H_{[q-2N,q]}(x,\omega)-\tilde E)^{-1}(q-2N,m) \right|<1
	\end{equation*}	
	for any $ m\in[q-N-[N/2],q] $. For $ m\in[p+N+[N/2],q-N-[N/2]] $, let
	\begin{equation*}
		a_m=m-N+r(x+m\omega),\ b_m=m+N+r(x+m\omega).
	\end{equation*}
	We clearly have $ [a_m,b_m]\subset[p,q] $. Using the hypothesis and
	\cref{lem:spec-to-decay}, we get
	\begin{equation*}
		\left| (H_{[a_m,b_m]}(x,\omega)-\tilde E)^{-1}(a_m,m) \right|
		+\left| (H_{[a_m,b_m]}(x,\omega)-\tilde E)^{-1}(b_m,m) \right|<1.
	\end{equation*}
	We can now apply \cref{lem:Poisson} to get that $ \tilde E\notin \spec H_{[p,q]}(x,\omega) $. Since
	this is true for any $ |\tilde E-E|\le \exp(-N^{1-})/2 $, it follows that
	\begin{equation*}
		\dist(E,\spec H_{[p,q]}(x,\omega))\ge \frac{1}{2}\exp(-N^{1-}).
	\end{equation*}
	Since $ \bar N $ was arbitrary, it follows that we can choose sequences $ a_k\to-\infty $ and
	$ b_k\to \infty $ such that
	\begin{equation*}
		\dist(E,\spec H_{[a_k,b_k]}(x,\omega))\ge \frac{1}{2}\exp(-N^{1-}).
	\end{equation*}
	The conclusion follows from \cref{lem:akbk}.
\end{proof}

The previous lemma relates the full spectrum $ \cS_\omega $ to the finite scale spectrum
\begin{equation*}
	\cS_{N,\omega}:=\bigcup_{x\in\T} \spec H_{[-N,N]}(x,\omega).
\end{equation*}
The proof of the lemma cannot be adjusted to give a relation between the finite scale spectra for different
scales. Instead, we will use the following weaker result.

\begin{lemma}[{\cite[Lem. 13.2]{GolSch11}}]\label{lem:finite-spectrum-criterion}
	Let $ \omega\in\T_{c,a} $, $ E\in \R $ be such that $ L(\omega,E)\ge \gamma>0 $. There exists
    $ N_0=N_0(V,c,a,\gamma) $ such that the following statement holds for any $ N\ge N_0 $.
    If
    \begin{equation*}
    \dist(E,\cS_{N,\omega})\ge \exp(-N^{1-}),
    \end{equation*}
    then
    \begin{equation*}
    	\dist(E,\cS_{\bar N,\omega})\ge \frac{1}{2}\exp(-N^{1-}),
    \end{equation*}
    for any $ \bar N\ge N $.
\end{lemma}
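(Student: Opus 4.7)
The plan is to adapt the argument of \cref{lem:spectrum-criterion} to the finite scale setting, again applying \cref{lem:Poisson} to rule out any $\tilde E$ near $E$ from $\spec H_{[-\bar N,\bar N]}(y,\omega)$, uniformly in $y\in\T$ and $\bar N\ge N$. The first step is to rephrase the hypothesis in a shift-invariant form: since $\cS_{N,\omega}=\bigcup_{x\in\T}\spec H_{[-N,N]}(x,\omega)$ contains $\spec H_{[a,a+2N]}(x,\omega)$ for every $a\in\Z$ and $x\in\T$ (unitary equivalence by a phase shift $x\mapsto x+(a+N+1)\omega$), the assumption yields
\begin{equation*}
\dist\bigl(E,\spec H_{[a,a+2N]}(x,\omega)\bigr)\ge \exp(-N^{1-})\quad \text{for all }a\in\Z,\ x\in\T.
\end{equation*}
By \cref{prop:Lyapunov-continuity} we also have $L(\omega,\tilde E)\ge \gamma/2$, and trivially $\dist(\tilde E,\spec H_{[a,a+2N]}(x,\omega))\ge \frac12\exp(-N^{1-})$ for any $\tilde E$ with $|\tilde E-E|\le \frac12\exp(-N^{1-})$.

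Next, fix $y\in\T$ and $\bar N\ge N$, and apply \cref{lem:Poisson} to the interval $[-\bar N,\bar N]$. For each $m\in[-\bar N,\bar N]$ I would choose the auxiliary subinterval $\Lambda_m=[a_m,b_m]\subset[-\bar N,\bar N]$ as follows:
\begin{itemize}
\item Interior case $m\in[-\bar N+N,\bar N-N]$: take $\Lambda_m=m+[-N,N]$, so $m-a_m=b_m-m=N$.
\item Left-boundary case $m\in[-\bar N,-\bar N+N)$: take $\Lambda_m=[-\bar N,-\bar N+2N]$, so $a_m=-\bar N$ (the left boundary of the ambient interval, killing one term in \cref{lem:Poisson}) and $b_m-m\ge N$.
\item Right-boundary case: symmetric choice with $b_m=\bar N$.
\end{itemize}
In each configuration the surviving endpoints are at distance at least $N$ from $m$, so by \cref{lem:spec-to-decay} with $K=N^{1-}$ applied to the shift-equivalent operator $H_{[a,a+2N]}$, we get
\begin{equation*}
\bigl|(H_{\Lambda_m}(y,\omega)-\tilde E)^{-1}(a_m,m)\bigr|+\bigl|(H_{\Lambda_m}(y,\omega)-\tilde E)^{-1}(b_m,m)\bigr|\le 2\exp(-\gamma N+2N^{1-}(\log N)^{C_0}),
\end{equation*}
where the left-hand side is modified by the indicator factors $(1-\langle\delta_{\pm\bar N},\delta_{a_m/b_m}\rangle)$ in the boundary cases. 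For $N\ge N_0(V,c,a,\gamma)$ this quantity is $<1$, so \cref{lem:Poisson} gives $\tilde E\notin \spec H_{[-\bar N,\bar N]}(y,\omega)$.

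Since this holds for every $\tilde E$ in the disc of radius $\frac12\exp(-N^{1-})$ around $E$ and every $y\in\T$, taking the union over $y$ delivers
\begin{equation*}
\dist(E,\cS_{\bar N,\omega})\ge \tfrac12\exp(-N^{1-}),
\end{equation*}
which is the desired conclusion. The only real obstacle is bookkeeping near the boundary: one cannot center $\Lambda_m$ on $m$ when $m$ is within distance $N$ of $\pm\bar N$, and the remedy is to flush $\Lambda_m$ against the ambient boundary so that one Poisson term drops out. Everything else is driven by the comparison $\gamma N\gg N^{1-}(\log N)^{C_0}$ for $N$ large, exactly as in the proof of \cref{lem:spectrum-criterion}.
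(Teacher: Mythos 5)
Your proof is correct and follows exactly the route the paper intends: the published proof simply says "analogous to Lemma~\ref{lem:spectrum-criterion} with $r(x)=0$", and you have carried out that specialization explicitly, using translation invariance of $\cS_{N,\omega}$ to place a length-$(2N+1)$ window around each $m\in[-\bar N,\bar N]$ (flush against the boundary when $m$ is within $N$ of $\pm\bar N$) and then invoking Lemma~\ref{lem:spec-to-decay} and Lemma~\ref{lem:Poisson}. The only cosmetic points are that the decay rate should be $\gamma/2$ (coming from Proposition~\ref{prop:Lyapunov-continuity} applied to $\tilde E$) rather than $\gamma$, and the degenerate case $\bar N=N$ is trivially covered by the hypothesis, neither of which affects the argument.
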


\begin{proof}
	The proof is analogous to that of \cref{lem:spectrum-criterion}. The only difference is that we now know that $ r(x)=0 $.
\end{proof}

\section{Stability of the Spectrum}

In this section we address the issue of how much of the finite scale spectrum $ \cS_{N,\omega} $ survives when we pass to a larger scale
$ \bar N $ or to the full scale.

\begin{lemma}\label{lem:SN-SNbar}
	Let $ \omega\in\T_{c,a} $ and assume $ L(\omega,E)\ge \gamma>0 $ for any $ E\in(E',E'') $. There exist
	$ c_0=c_0(V,c,a,\gamma) $ and
	$ N_0=N_0(V,c,a,\gamma) $ such that
	\begin{equation*}
		\mes \left(\cS_{N,\omega}\cap(E',E'')\setminus \cS_{\bar N,\omega}\right)\le \exp(-c_0N)
	\end{equation*}
	for any $ \bar N\ge N\ge N_0 $.
\end{lemma}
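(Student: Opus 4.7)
My plan is to construct, for each $E\in\cS_{N,\omega}\cap(E',E'')$ outside a small exceptional set of $E$'s, an approximate eigenvector of $H_{[-\bar N,\bar N]}(x_0,\omega)$ with exponentially small residual, so that $\dist(E,\cS_{\bar N,\omega})$ is itself exponentially small; the lemma will then follow by bounding the measure of the exceptional set. Concretely, if $E = E_j^{(N)}(x_0,\omega)$ and $\psi = \psi_j^{(N)}(x_0,\omega)$ is the $\ell^2$-normalized eigenfunction, then the zero-extension $\tilde\psi\in\ell^2([-\bar N,\bar N])$ satisfies, using the Dirichlet condition $\psi(\pm(N+1))=0$,
$$
\|(H_{[-\bar N,\bar N]}(x_0,\omega)-E)\tilde\psi\|^2 = |\psi(-N)|^2+|\psi(N)|^2,
$$
so $\dist(E,\cS_{\bar N,\omega}) \le \sqrt{2}\max(|\psi(-N)|,|\psi(N)|)$. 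It therefore suffices to show that the image of the set
$$
\{(x_0,j)\::\:\max(|\psi_j^{(N)}(x_0,\omega)(\pm N)|) > \exp(-c_0 N)\}
$$
under $(x_0,j)\mapsto E_j^{(N)}(x_0,\omega)$ has measure at most $\exp(-c_0 N)$.

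To estimate the boundary values, let $m_0 = m_0(x_0,j)\in[-N,N]$ be a maximizer of $|\psi|$ (so $|\psi(m_0)|\ge (2N+1)^{-1/2}$), and apply the Poisson formula \eqref{eq:Poisson} on $[m_0+1,N]$ together with $\psi(N+1)=0$ to obtain
$$
|\psi(N)|\le|(H_{[m_0+1,N]}(x_0,\omega)-E)^{-1}(N,m_0+1)|\,|\psi(m_0)|,
$$
and an analogous bound on $|\psi(-N)|$ via $[-N,m_0-1]$. By \cref{lem:ldt-to-decay}, whenever the LDT of \cref{prop:ldt} holds for $f_{[m_0+1,N]}(x_0,\omega,E)$ and for $f_{[-N,m_0-1]}(x_0,\omega,E)$, the corresponding Green's function entries are bounded by $\exp(-\gamma(N-m_0-1)+K)$ and $\exp(-\gamma(m_0+N)+K)$ respectively, with $K$ comparable to the LDT parameter $H$. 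Hence, provided $|m_0|\le N/2$ and the LDTs hold on both sub-intervals, choosing $H$ a suitable constant multiple of $N$ gives $\max(|\psi(\pm N)|)\le\exp(-\gamma N/5)$, comfortably below $\exp(-c_0 N)$ for a constant $c_0=c_0(V,c,a,\gamma)$.

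The exceptional set of $E$'s thus decomposes into: (i) images of $x_0$'s in the LDT failure set on the relevant sub-intervals and energies, and (ii) images of $(x_0,j)$ with maximizer $|m_0(x_0,j)|>N/2$. For (i), \cref{prop:ldt} produces an $x_0$-bad set of small measure which, pushed forward through the $2N+1$ Rellich branches via the Lipschitz bound \eqref{eq:Hx-vs-Hx0}, only incurs the harmless factor $(2N+1)C(V)$. The main obstacle is (ii), the ``boundary-centered'' eigenvalues whose eigenfunctions concentrate near $\pm N$: here I would invoke the quantitative separation/equidistribution of Dirichlet eigenvalues drawn from \cite{GolSch11}, which limits the count of such modes per $x_0$ and, after projection through \eqref{eq:Hx-vs-Hx0}, keeps the contribution on the $E$-axis below $\exp(-c_0 N)$. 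Summing the contributions of (i) and (ii) closes the proof.
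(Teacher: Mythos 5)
Your decomposition into cases (i)/(ii) identifies the right obstruction, but the proposed resolution of (ii) does not close, and the paper avoids the problem entirely by a different truncation.

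The gap in (ii) is genuine. You acknowledge that for eigenfunctions concentrated near $\pm N$ the zero-extension fails, and you propose to control this exceptional set ``by the quantitative separation/equidistribution of Dirichlet eigenvalues drawn from \cite{GolSch11}.'' Those results (Proposition~F and its almost Mathieu version, in the paper's numbering) lie much deeper: they require the full localization machinery of the later sections, and for general analytic $V$ they require removing a set of frequencies $\omega$, which is precisely what \cref{lem:SN-SNbar} is designed to avoid. Moreover, even granting a bound on the \emph{count} of boundary-concentrated modes per fixed $x_0$, this does not yield a bound on the \emph{measure} of their $E$-images as $x_0$ sweeps $\T$; a few Rellich branches can still sweep out sizeable energy intervals. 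Case (i) also has a subtler issue: the exceptional set of phases in \cref{prop:ldt} depends on the energy $E$, while here $E=E_j^{(N)}(x_0,\omega)$ depends on $x_0$, so ``push the $x_0$-bad set through the Rellich branches'' is not a well-posed one-variable push-forward.

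The paper sidesteps all of this by not truncating at the boundary. For each $E=E_j^{[-N,N]}(x,\omega)$ it locates, via \cref{prop:ldt} and the Diophantine condition, an \emph{interior} window $n_0+[-\ell,\ell]$ with $|n_0|\le C\ell\ll N$ and $\ell=[c_1N]$ on which the large deviation estimate holds; \cref{lem:ldt-to-decay} and the Poisson formula then give $|\psi_j^{[-N,N]}(x,\omega;n)|\le\exp(-cN)$ for $|n-n_0|\le\ell/2$. Splitting $\psi$ at $n_0$ into $\xi_l$ (supported on $[-N,n_0]$) and $\xi_r$ (supported on $[n_0,N]$), at least one has norm $\ge 1/2$. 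If $\norm{\xi_l}\ge 1/2$, then $\xi_l$ already satisfies the Dirichlet condition at $-(N+1)$, and its residual as an approximate eigenvector of $H_{[-N,-N+2\bar N]}(x,\omega)$ comes only from the cut at $n_0$, where $\psi$ is exponentially small; hence $\dist(E,\cS_{\bar N,\omega})\le 2\exp(-cN)$. This works for \emph{every} eigenvalue branch with no exceptional $x$-set at all. The measure bound then follows because the finite-scale spectra are unions of $\lesssim\bar N$ intervals, so $\cS_{N,\omega}\cap(E',E'')$ lies in an $\exp(-cN)$-neighborhood of $\cS_{\bar N,\omega}$ whose complement within $\cS_{N,\omega}$ has measure $\lesssim\bar N\exp(-cN)$; the case $\bar N\gg N$ is handled by iterating over the doubling scales $N^{(k)}=N^{2^k}$. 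In short: replace the zero-extension at $\pm N$ by a split at a good interior point, and case (ii) never arises.
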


\begin{proof}
	Let $ N^{(k)}=N^{2^k} $ and assume $ \bar N\le N^{(1)} $.
	Let $ E=E_j^{[-N,N]}(x,\omega)\in \cS_{N,\omega}\cap(E',E'') $. Let $ \ell=[c_1N] $, with $ c_1\ll 1 $.
	By \cref{prop:ldt} we can find an
	interval $ n_0+[-\ell,\ell] $, $ |n_0|\le C\ell\ll N $, on which the large deviation estimate holds.
	By \cref{lem:ldt-to-decay} and the Poisson formula it follows that
	\begin{equation*}
		\left| \psi^{[-N,N]}_j(x,\omega;n) \right|\le \exp(-cN),\quad |n-n_0|\le \ell/2.
	\end{equation*}
	Let
	\begin{equation*}
		\xi_l(n) =\begin{cases}
			\psi_j^{[-N,N]}(x,\omega;n) &, n\in[-N,n_0]\\
			0 &, \text{ otherwise}
		\end{cases},\quad\quad
		\xi_r(n) =\begin{cases}
			\psi_j^{[-N,N]}(x,\omega;n) &, n\in[n_0,N]\\
			0 &, \text{ otherwise}
		\end{cases}.
	\end{equation*}
	Then $ \norm{\xi_l}\ge 1/2 $ or $ \norm{\xi_r}\ge 1/2 $. If $ \norm{\xi_l}\ge 1/2 $, then the fact that
	\begin{equation*}
		\norm{(H_{[-N,-N+2\bar N]}(x,\omega)-E)\xi_l}\le \exp(-cN)
	\end{equation*}
	implies $ \dist(E,\cS_{\bar N,\omega})\le 2\exp(-cN) $. The same conclusion holds if $ \norm{\xi_r}\ge 1/2 $.
	Since the finite spectra are unions of intervals, it follows that
	\begin{equation*}
		\mes \left(\cS_{N,\omega}\cap(E',E'')\setminus \cS_{\bar N,\omega}\right)
		\lesssim \bar N \exp(-cN)\le \exp(-cN/2).
	\end{equation*}
	Recall that so far we are assuming $ \bar N\le N^{(1)}=N^2 $.
	In general, we can find $ k $ such that $ N^{(k)}\le \bar N\le N^{(k+1)} $ and we have
	\begin{multline*}
		\mes \left(\cS_{N,\omega}\cap(E',E'')\setminus \cS_{\bar N,\omega}\right)\\
		\le \mes \left(\cS_{N,\omega}\cap(E',E'')\setminus \cS_{N^{(1)},\omega}\right)
		+ \mes\left(\cS_{N^{(1)},\omega}\cap(E',E'')\setminus \cS_{N^{(2)},\omega}\right)\\
		+\ldots
		+ \mes\left(\cS_{N^{(k)},\omega}\cap(E',E'')\setminus \cS_{\bar N,\omega}\right)\\
		\le \exp(-cN)+\exp(-cN^{(1)})+\ldots+\exp(-cN^{(k)})
		\le \exp(-cN/2).
	\end{multline*}
\end{proof}

If the mass of an eigenvector $ \psi_j^{[-N,N]} $ is concentrated near the edges of the interval, then we cannot guarantee
that the corresponding eigenvalue is close to $ \cS_\omega $. We can only come close to the full scale spectrum provided that
the mass of $ \psi_j^{[-N,N]} $ is concentrated inside the interval. It is not clear whether each $ E\in \cS_{N,\omega} $
can be associated with such an eigenvector. However, we can produce spectral segments of considerable size for which this holds.

\begin{lemma}\label{lem:initial-spectral-segment}
	Let $ \omega\in \T_{c,a} $, $ E\in \cS_\omega $ and assume $ L(E,\omega)\ge \gamma>0 $. There exist
	$ c_0(V,c,a,\gamma) $, $ C_0(V,c,a,\gamma) $, $ N_0(V,c,a,\gamma) $ such that the following
	statement holds for $ N\ge N_0 $. There exist $ x_0\in \T $ and $ j_0\in[-N,N] $ such that
	\begin{equation*}
		\left| E_{j_0}^{[-N,N]}(x_0,\omega)-E \right| \le \exp(-N^{1-}),
	\end{equation*}
	and for all
	$ |x-x_0|<\exp(-(\log N)^{C_0}) $, we have
	\begin{equation*}
		\left| \psi_{j_0}^{[-N,N]}(x,\omega;n) \right|
		\le \exp(-c_0 N), \ |n|\ge (1-c_0)N.
	\end{equation*}
\end{lemma}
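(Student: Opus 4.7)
The plan is to pick a phase $x_0$ that simultaneously makes $E$ a near-eigenvalue of $H_{[-N,N]}(x_0,\omega)$ and keeps $E$ away from the Dirichlet spectra of the two edge sub-intervals $[-N,-N+\ell]$ and $[N-\ell,N]$, where $\ell = c_0 N$. Once such an $x_0$ is chosen, the Poisson formula together with \cref{lem:spec-to-decay} forces the eigenfunction $\psi_{j_0}$ to decay exponentially across the edge regions. The candidates for $x_0$ are produced by the spectrum criterion: since $E \in \cS_\omega$, the contrapositive of \cref{lem:spectrum-criterion} yields $x_* \in \T$ such that, by translation, the $N+1$ phases $x_r := x_* + r\omega$, $r \in [-N/2, N/2]$, all satisfy $\dist(E, \spec H_{[-N,N]}(x_r,\omega)) < \exp(-N^{1-})$.

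Selection of $x_0$ from these candidates uses Wegner's estimate and the Diophantine condition. Fix $K := (\log N)^{C_1}$, where $C_1$ is taken a few units larger than the exponent appearing in \cref{prop:Wegner}. Applying that proposition at scale $\ell$ to each edge sub-interval, the ``bad'' sets
\[
B_l := \{x : \dist(E, \spec H_{[-N,-N+\ell]}(x,\omega)) < \exp(-K)\},
\]
and $B_r$ (defined analogously) are each unions of at most $\lesssim \ell$ intervals of length much smaller than the Diophantine spacing $c/(N(\log N)^a)$ of the $x_r$'s, provided $C_1$ is sufficiently large. Consequently each such interval contains at most one candidate, so $B_l \cup B_r$ absorbs $\lesssim c_0 N$ candidates in total; fixing $c_0$ small enough relative to the implicit absolute constant in Wegner's ``$\lesssim \ell$'' bound, at least $N/2$ candidates remain, and we pick $x_0$ among them.

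At this $x_0$ we have (a) some $j_0$ with $|E_{j_0}^{[-N,N]}(x_0,\omega) - E| < \exp(-N^{1-})$, and (b) $\dist(E_{j_0}, \spec H_{[-N,-N+\ell]}(x_0,\omega)) \ge \exp(-K)/2$. Using the Dirichlet condition $\psi_{j_0}(-N-1) = 0$, the Poisson formula on $[-N, -N+\ell]$ collapses to a single Green-function term
\[
\psi_{j_0}(m) = \psi_{j_0}(-N+\ell+1) \cdot (H_{[-N,-N+\ell]}(x_0,\omega) - E_{j_0})^{-1}(m, -N+\ell),
\]
and \cref{lem:spec-to-decay} bounds this Green function by $\exp(-\gamma|m+N-\ell| + (\log N)^{O(1)})$. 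For $m \in [-N, -N+\ell/2]$ this gives $|\psi_{j_0}(m)| \le \exp(-c_0 N)$ after relabeling $c_0$, and the right edge is symmetric. For the stability clause, if $|x - x_0| < \exp(-(\log N)^{C_0})$ with $C_0 > C_1$ chosen large enough, the Lipschitz continuity of eigenvalues and spectra in $x$ preserves (a) and (b) with halved margins, and the Rellich--Kato parametrization makes $(E_{j_0}(x), \psi_{j_0}(x,\omega))$ analytic on this neighborhood; the same Poisson/Green-function argument then runs at $x$.

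The main obstacle is the counting step. The candidates produced by the spectrum criterion are $O(N)$ many and separated on $\T$ by the Diophantine spacing $\sim 1/(N(\log N)^a)$, while Wegner produces $\lesssim \ell = c_0 N$ bad intervals whose individual lengths must be controlled against that spacing and whose total number must not absorb all the candidates. Balancing these forces $c_0$ to be sufficiently small compared to the implicit absolute constant in Wegner's ``$\lesssim \ell$'' bound, and forces $C_1$ to be sufficiently larger than the exponent in that proposition; once these parameters are pinned down, the remainder of the argument is a routine application of the Poisson formula.
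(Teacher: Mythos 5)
The proposal is correct and takes essentially the same approach as the paper: use the spectrum criterion (Lemma~\ref{lem:spectrum-criterion}) to produce a phase $x_*$ at which $E$ is a near-eigenvalue of every $N$-window $r+[-N,N]$ with $|r|\le N/2$; apply Wegner's estimate at scale $\ell\sim c_0N$ together with the Diophantine spacing of the orbit to select, by counting, a window whose two edge sub-intervals carry no near-eigenvalue of $E$; then the Poisson formula and Lemma~\ref{lem:spec-to-decay} give exponential decay of $\psi_{j_0}$ across the edges, and Lipschitz continuity in $x$ extends this to the neighborhood $|x-x_0|<\exp(-(\log N)^{C_0})$. The only differences are cosmetic: the paper translates intervals $n_0+[-N,N]$ rather than phases $x_*+r\omega$, and it does not need the Rellich--Kato analyticity you invoke (continuity of the eigenvalue branch suffices).
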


\begin{proof}
	Since $ E\in \cS_\omega $, \cref{lem:spectrum-criterion} implies that there exists $ x'\in\T $ such
	that
	\begin{equation}\label{eq:E-spec-N}
		\max_{|n|\le N/2} \dist(E,\spec H_{n+[-N,N]}(x',\omega))\le \exp(-N^{1-}).
	\end{equation}
	Let $ \ell=[cN] $, with $ c<1 $ to be chosen later. We will argue that there exists $ |n_0|\le N/2 $ such
	that the Green function at scale $ \ell $ has off-diagonal decay on the intervals
	\begin{equation*}
		n_0+[-N,-N+\ell-1] \text{\ \ and\ \ } n_0+[N-\ell+1,N]
	\end{equation*}
	at the edges of $ n_0+[-N,N] $.
	  Due to \cref{prop:Wegner} we know that there
	exists $ A(V,c,a,\gamma)\gg 1 $ such that
	\begin{equation*}
		\left\{ x\in \T: \dist(E,\spec H_{\ell}(x_0,\omega)) <\exp(-(\log \ell)^A)\right\}
		\subset \bigcup_{k=1}^{k_0}I_k,
	\end{equation*}
	where $ I_k $ are intervals such that $ |I_k|\le \exp(-(\log \ell)^{A/2}) $, and $ k_0\le C\ell $. We now
	set $ c=(4C)^{-1} $ so that we have $ \ell\le N/4 $. Due to the Diophantine condition, each $ I_k $ contains
	at most one point of the form $$ x'+(n-N)\omega \text{\ \ or\ \ } x'+(n+N-\ell+1)\omega, $$ with $ |n|\le N/2 $.
	Since $ k_0\le N/4 $, it follows
	that there exists $ |n_0|\le N/2 $ such that $ x_0+(n_0-N)\omega $ and $  x_0+(n_0+N-\ell+1)\omega  $
	are not in any of the $ I_k $ and therefore
	\begin{multline}\label{eq:dist-E-H-N'}
		\dist(E,\spec H_{\ell}(x_0+(n_0-N)\omega,\omega)),
		\dist(E,\spec H_{\ell}(x_0+(n_0+N-\ell+1)\omega,\omega))\\
		\ge\exp(-(\log \ell)^A)\gg \exp(-N^{1-}).
	\end{multline}
	Let $ x_0=x'+n_0\omega $. By \cref{eq:E-spec-N} there exists $ j_0 $ such that
	\begin{equation}
		\left| E-E_{j_0}^{[-N,N]}(x_0,\omega) \right|\le \exp(-N^{1-}).
	\end{equation}
	From this, \cref{eq:dist-E-H-N'}, and \cref{eq:Hx-vs-Hx0} it follows that
	\begin{multline*}
		\dist(E_{j_0}^{[-N,N]}(x,\omega),\spec H_{\ell}(x-N\omega,\omega)),\\
		\dist(E_{j_0}^{[-N,N]}(x,\omega),\spec H_{\ell}(x+(N-\ell+1)\omega,\omega))
		\ge \frac{1}{2}\exp(-(\log \ell)^A),
	\end{multline*}
	for any $ |x-x_0|\le c \exp(-(\log \ell)^A) $. \cref{lem:spec-to-decay} implies
	that
	\begin{align*}
		& \left| (H_{\ell}(x-N\omega,\omega)-E_{j_0}^{[-N,N]}(x,\omega))^{-1}(j,k) \right|
		\le \exp(-\gamma|j-k|+(\log N)^C)\\
		& \left| (H_{\ell}(x+(N-\ell+1)\omega,\omega)-E_{j_0}^{[-N,N]}(x,\omega))^{-1}(j,k) \right|
		\le \exp(-\gamma|j-k|+(\log N)^C).
	\end{align*}
	The desired estimates on the eigenvector $ \psi_{j_0}^{[-N,N]}(x,\omega) $ now follow by applying
	the Poisson formula on the intervals $ [-N,-N+\ell-1] $ and $ [N-\ell+1,N] $.
\end{proof}

Next we address the stability of the spectral segments produced via the previous lemma. As in the proof of \cref{lem:SN-SNbar} we need
to argue by induction on scales. The inductive step that will be stated in \cref{lem:stabilization-approx-ef} is essentially Lemma~12.22 of
\cite{Bou05}. For the convenience of the reader we will sketch its proof. The original proof is for the case when the potential is a trigonometric polynomial. We will include the simple approximation argument needed to deal with analytic potentials. For this we recall some facts regarding the approximation of the potential by trigonometric polynomials.

Let
\begin{equation*}
	V(x)=\sum_{n=-\infty}^\infty v_n e(nx),
\end{equation*}
be the Fourier series expansion for $ V $, where we use the notation $ e(x):=\exp(2\pi i x) $. It is known that
since $ V $ is real-analytic on $ \T $, it can be extended to a strip of width $ 2\rho_0 $ around the real
axis, for some $ \rho_0>0 $, and that this implies the existence of $ C $ such that
\begin{equation*}
	|v_n|\le C\exp(-\pi \rho_0|n|).
\end{equation*}
In fact, we can take $ C=\sup_{x\in \T} |V(x\pm i\rho_0/2)| $. Let
\begin{equation}\label{eq:V_K}
	\tilde V(x)=\sum_{n=-K}^K v_n e(nx).
\end{equation}
As a consequence of the bound on the Fourier coefficients, we have
\begin{equation*}
	\sup_{x\in \T} |V(x)-\tilde V(x)|\le C(\norm{V}_\infty,\rho_0) \exp(-\pi\rho_0K/3).
\end{equation*}
It follows that we always have
\begin{equation}\label{eq:trig-approximation}
	\left| E_j^{(N)}(x,\omega)-\tilde E_{j}^{(N)}(x,\omega) \right|
	\le \norm{H_N(x,\omega)-\tilde H_{N}(x,\omega)}\le C\exp(-cK).
\end{equation}

\begin{lemma}\label{lem:stabilization-approx-ef}
	Let $ \omega\in\T_{c,a} $ and assume $ L(\omega,E)\ge \gamma>0 $ for any $ E\in(E',E'') $. Let $ I\subset [0,1] $ be an interval and
	let $ j\in[-N,N] $. Assume that $ E_j^{[-N,N]}(I,\omega)\subset (E',E'') $ and that for each $ x\in I $, there exists $ \xi $,
	$ \norm{\xi}=1 $, with support in $ [-N+1,N-1] $, such that
	\begin{equation}\label{eq:approx-ef}
		\norm{(H(x,\omega)-E_j^{[-N,N]}(x,\omega))\xi}<e^{-c_0N},
	\end{equation}
	where $ c_0>0 $ is some constant. Let
	\begin{equation*}
		\log N_1 \gg \log N\gg \log\log N_1.
	\end{equation*}
	If $ c_0\le C(V,c,a,\gamma)\ll 1 $ and $ N\ge N_0(V,c,a,\gamma,c_0) $, then we can partition $ I $ into intervals $ I_m $,
	$ m\le N_1^C $, with $ C $ an absolute constant, and for each $ I_m $, there exists $ j_1\in[-N_1,N_1] $ such that
	\begin{equation*}
		\left| E_j^{[-N,N]}(x,\omega)-E_{j_1}^{[-N_1,N_1]}(x,\omega) \right|\le \exp(-c_0N/2), x\in I_m,
	\end{equation*}
	and for each $ x\in I_m $, there exists $ \xi $, $ \norm{\xi}=1 $, with support in $ [-N_1+1,N_1-1] $, satisfying
	\begin{equation*}
		\norm{(H(x,\omega)-E_{j_1}^{[-N_1,N_1]}(x,\omega))\xi}<e^{-c_0N_1}.
	\end{equation*}	
\end{lemma}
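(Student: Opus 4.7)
The strategy is: extend $\xi$ by zero to scale $N_1$, spectrally identify the target eigenvalue $E_{j_1}^{[-N_1,N_1]}$, use the localization principle to force the true eigenvector to decay near the edges $\pm N_1$, and finally truncate it to produce the new approximate eigenvector with support in $[-N_1+1, N_1-1]$. The first step is immediate: since $\log N_1 \gg \log N$ we have $N_1 \gg N$, so the support $[-N+1, N-1]$ of $\xi$ lies strictly inside $(-N_1, N_1)$; extending $\xi$ by zero, the Dirichlet terms vanish and so $H_{[-N_1,N_1]}(x,\omega)\xi = H(x,\omega)\xi$, giving
\[
\|(H_{[-N_1,N_1]}(x,\omega) - E_j^{[-N,N]}(x,\omega))\xi\| < e^{-c_0 N}.
\]
By the spectral theorem there exists $j_1 = j_1(x) \in [-N_1, N_1]$ with $|E_{j_1}^{[-N_1,N_1]}(x,\omega) - E_j^{[-N,N]}(x,\omega)| < e^{-c_0 N} \le e^{-c_0 N/2}$, which is the asserted eigenvalue approximation.

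For the new approximate eigenvector I would truncate $\psi_{j_1}^{[-N_1,N_1]}$ to $[-N_1+1, N_1-1]$; the residual of this truncation is controlled by $|\psi_{j_1}^{[-N_1,N_1]}(\pm N_1)|$, so the task reduces to exponential decay at the edges. For this I invoke the localization principle from the introduction: pick $\ell \sim c_0 N_1/\gamma$, so that exponential decay at rate $\gamma$ over distance $\ell$ produces boundary values $\lesssim e^{-c_0 N_1}$. If the large deviation estimate of \cref{prop:ldt} is valid on some $\ell$-interval at each edge of $[-N_1, N_1]$ for the energy $E_{j_1}^{[-N_1,N_1]}(x,\omega)$, then \cref{lem:ldt-to-decay} furnishes off-diagonal decay of the corresponding Green function, and the Poisson formula \cref{eq:Poisson} converts this into the required exponential decay of $\psi_{j_1}^{[-N_1,N_1]}$ at the edges. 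Provided $c_0 \ll \gamma$, the residual after truncation is $\le e^{-c_0 N_1}$.

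To run this uniformly in $x \in I$, I would partition $I$ so that on each cell $I_m$ a single pair of good edge positions $a_\pm$ and a single index $j_1$ can be used. For each fixed edge position $a$ and energy $E$, the set of $x$ where LDT fails on $a+[0,\ell]$ is contained in $\lesssim \ell$ intervals of exponentially small measure by \cref{prop:ldt}; using the Diophantine condition $\omega \in \T_{c,a}$ to spread the candidate shifts $x + a\omega$ across $\T$, a pigeonhole argument furnishes for each $x \in I$ valid choices $a_\pm(x) \in [\pm N_1 \mp cN_1, \pm N_1]$. Refining $I$ along the piecewise constant map $x \mapsto (j_1, a_-, a_+)$, where piecewise constancy of $j_1$ uses the Lipschitz estimate \cref{eq:Hx-vs-Hx0}, yields a partition of cardinality $\le N_1^C$ for an absolute $C$. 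Two auxiliary points: the target energy depends on $x$, which I handle via a net of energies and the continuity of $L$ (\cref{prop:Lyapunov-continuity}) together with continuity of $f_\ell$ in $E$; and the original Lemma 12.22 of \cite{Bou05} is stated for trigonometric polynomial $V$, so we truncate via \cref{eq:V_K} with $K = N_1^{\alpha}$ and use \cref{eq:trig-approximation} to absorb the perturbation, which is $\le e^{-cK} \ll e^{-c_0 N_1}$, into the residual.

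The main obstacle I foresee is precisely this partition step: coordinating the $x$-dependence of the target energy $E_{j_1}^{[-N_1,N_1]}(x,\omega)$, of the good shifts $a_\pm(x)$, and of the LDT exceptional set, while keeping the total number of cells polynomial in $N_1$ with an absolute exponent. Everything else (zero extension, spectral comparison, boundary decay via Poisson, and the analytic-to-trigonometric reduction) is a matter of assembling the transfer-matrix tools already collected in Section~2.
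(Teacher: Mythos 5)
Your opening steps match the paper's: extending $\xi$ by zero and noting $H\xi = H_{[-N_1,N_1]}\xi$, expanding in the eigenbasis of $H_{[-N_1,N_1]}(x,\omega)$, and carrying out the final partition by approximating $V$ with a trigonometric polynomial (\cref{eq:V_K}) and invoking a semialgebraic/B\'ezout count. But the way you select $j_1$ is wrong, and this is precisely the crux of the lemma.

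You choose $j_1$ via the spectral theorem, as an index with $|E_{j_1}^{[-N_1,N_1]} - E_j^{[-N,N]}| < e^{-c_0 N}$. This controls the eigenvalue but gives no information about the eigenvector $\psi_{j_1}^{[-N_1,N_1]}$. This lemma is deployed exactly in a regime where double resonances have not been eliminated, so there may be several indices $k$ with $|E_k^{[-N_1,N_1]} - E_j^{[-N,N]}|$ extremely small; $\xi$ may then project onto a combination of the corresponding eigenvectors while the particular $\psi_{j_1}$ you picked is concentrated near the edges $\pm N_1$. The localization principle cannot rule this out: run as in \cref{lem:initial-spectral-segment}, it yields decay of $\psi_{j_1}$ only on a thin annulus $k_1 \ge |n| \ge k_1 - 3c_0N_1$ for some pigeonholed $N_1/2 < k_1 < N_1$, and says nothing about the mass in $(k_1, N_1]$. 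If essentially all the mass sits there, the truncated vector has norm $\approx 0$ and normalizing it destroys the residual bound. (Your truncation window $[-N_1+1,N_1-1]$ is also off: the Poisson argument bounds $|\psi_{j_1}|$ inside the good annulus, not at the sites $\pm N_1$ themselves, and you cannot force the good interval to have its right endpoint at $N_1$ by the usual pigeonhole over shifts; the paper truncates at $\pm k_1$ for this reason.)

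The paper resolves this with a different choice of $j_1$: it picks $j_1$ so that $|\langle\xi, \psi_{j_1}^{[-N_1,N_1]}\rangle| \ge N_1^{-1/2}$, which exists by pigeonhole since $\|\xi\|=1$ and there are $\le 2N_1+1$ eigenvectors. Feeding this into \cref{eq:Ej-Ek} gives the slightly weaker, but still sufficient, bound $|E_j^{[-N,N]} - E_{j_1}^{[-N_1,N_1]}| < \sqrt{N_1}\,e^{-c_0N} \le e^{-c_0N/2}$ (using $\log N_1 \ll N$). Crucially, this choice also forces $\psi_{j_1}$ to carry mass $\ge N_1^{-1/2}$ on the support of $\xi$, which lies in $[-N+1,N-1]\subset[-k_1,k_1]$; hence the projection $P$ of $\psi_{j_1}$ onto $[-k_1,k_1]$ obeys $\|P\psi_{j_1}\| \ge N_1^{-1/2}$, and the normalized truncation $\eta = P\psi_{j_1}/\|P\psi_{j_1}\|$ has residual $\lesssim \sqrt{N_1}\exp(-3c_0N_1) < \exp(-2c_0N_1)$. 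This mass lower bound, rather than the partition bookkeeping you flag as the obstacle, is the missing ingredient; without it the truncation step can fail outright for your $j_1$.
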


\begin{proof}
	Fix $ x\in I $ and let $ \xi $ be as in \cref{eq:approx-ef}. We have
	\begin{align*}
		H_{[-N,N]}(x,\omega) &= H_{[-N_1,N_1]}(x,\omega),\\
		H_{[-N,N]}(x,\omega)\xi &= E_j^{[-N,N]}(x,\omega)\xi+O(e^{-c_0N})\\
		& = E_j^{[-N,N]}(x)\sum \langle \xi,\psi_k^{[-N_1,N_1]}(x,\omega)\rangle \psi_k^{[-N_1,N_1]}(x,\omega)+ O(e^{-c_0N}),\\
		H_{[-N_1,N_1]}(x,\omega)\xi &=
			\sum E_k^{[-N_1,N_1]}(x,\omega)\langle \xi,\psi_k^{[-N_1,N_1]}(x,\omega)\rangle \psi_k^{[-N_1,N_1]}(x,\omega).
	\end{align*}
	Note that for the first two identities, we used the fact that $ \xi $ is supported in $ [-N+1,N-1] $.
	It follows that
	\begin{equation}\label{eq:Ej-Ek}
		\left(\sum|E_j^{[-N,N]}(x,\omega)-E_k^{[-N_1,N_1]}(x,\omega)|^2
			\langle \xi,\psi_k^{[-N_1,N_1]}(x,\omega)\rangle^2 \right)^{1/2}<e^{-c_0N}.
	\end{equation}
	Since $ \norm{\xi}=1 $, there exists $ j_1(x)\in[-N_1,N_1] $ such that
	\begin{equation}\label{eq:psi-mass}
		| \langle \xi, \psi_{j_1}^{[-N_1,N_1]}(x,\omega)\rangle |\ge 1/\sqrt{N_1}.
	\end{equation}
	The estimate \cref{eq:Ej-Ek} implies that
	\begin{equation}\label{eq:Ej-Ej1}
		|E_j^{[-N,N]}(x,\omega)-E_{j_1}^{[-N_1,N_1]}(x,\omega)|<\sqrt{N_1}e^{-c_0N}.
	\end{equation}
	
	As in the proof of \cref{lem:initial-spectral-segment} it can be seen that there exists $ N_1/2<k_1<N_1 $ such that
	\begin{equation}\label{eq:psi-decay}
		|\psi_{j_1}^{[-N_1,N_1]}(x,\omega;n)|\le \exp(-3c_0N_1),  k_1\ge |n|\ge k_1-3c_0N_1 .
	\end{equation}
	For this we used the assumption that $ c_0 $ is small enough.
	Let $ \eta $ be the normalized projection of $ \psi_{j_1}^{[-N_1,N_1]}(x,\omega) $ onto the subspace corresponding to the
	interval $ [-k_1,k_1] $. By \cref{eq:psi-mass} and \cref{eq:psi-decay}, we have
	\begin{equation}\label{eq:existence-of-approx-ef}
		\norm{(H(x,\omega)-E_{j_1}^{[-N_1,N_1]})\eta}\lesssim \sqrt{N_1}\exp(-3c_0N_1)<\exp(-2c_0N_1).
	\end{equation}
	
	Now we just need to estimate the number of components of the set of phases $ x $ that satisfy \cref{eq:Ej-Ej1} and
	\cref{eq:existence-of-approx-ef}. For this we need to approximate the potential $ V $ by a trigonometric polynomial.
	For the purpose of the approximation, we note that the existence of $ \eta $, $ \norm{\eta}=1 $,
	with support in $ [-N_1+1,N_1-1] $, satisfying \cref{eq:existence-of-approx-ef} is equivalent to
	\begin{equation}\label{eq:existence-of-approx-ef-equiv}
		\norm{\left[P_{N_1}\left(H(x,\omega)-E_{j_1}^{[-N_1,N_1]}\right)^*
			\left(H(x,\omega)-E_{j_1}^{[-N_1,N_1]}\right)P_{N_1}\right]^{-1}}
		>\exp(2c_0N_1),
	\end{equation}
	where $ P_{N_1} $ is the projection onto the subspace corresponding to the interval $ [-N_1+1,N_1-1] $.
	Choose $ \tilde V $ as in \cref{eq:V_K} with $ K=CN_1^2 $. Then we have
	\begin{equation*}
		|\tilde E_j^{[-N,N]}(x,\omega)-\tilde E_{j_1}^{[-N_1,N_1]}(x,\omega)|<2\sqrt{N_1}e^{-c_0N},
	\end{equation*}
	and
	\begin{equation*}
		\norm{\left[P_{N_1}\left(\tilde H(x,\omega)-\tilde E_{j_1}^{[-N_1,N_1]}\right)^*
			\left(\tilde H(x,\omega)-\tilde E_{j_1}^{[-N_1,N_1]}\right)P_{N_1}\right]^{-1}}
		>\exp(2c_0N_1)/2.
	\end{equation*}	
	The set of $ x $'s satisfying the above estimates can be given a semialgebraic description in terms of polynomials of degree
	at most $ N_1^C $ (see the proof of \cite[Lem. 12.22]{Bou05}). It follows that $ I $ can be partitioned into intervals
	$ I_m $, $ m\le N_1^C $, such that $ j_1(x) $ from the above estimates can be kept constant on each of the subintervals. Going
	back to the original potential, the estimates \cref{eq:Ej-Ej1} and \cref{eq:existence-of-approx-ef} hold up to a correction
	by a constant factor, and with the constant choice of $ j_1 $ on each $ I_m $. This concludes the proof.
\end{proof}

The next lemma is our result on the stability of the spectral segments from \cref{lem:initial-spectral-segment}.

\begin{lemma}\label{lem:finite-segment-to-full-spectrum-Bourgain}
	Let $ \omega\in\T_{c,a} $ and assume $ L(\omega,E)\ge \gamma>0 $ for any $ E\in(E',E'') $. Let $ I\subset [0,1] $ be an interval and
	let $ j\in[-N,N] $. Assume that $ E_j^{[-N,N]}(I,\omega)\subset (E',E'') $ and that for each $ x\in I $, there exists $ \xi $,
	$ \norm{\xi}=1 $, with support in $ [-N+1,N-1] $, such that
	\begin{equation}\label{eq:approx-ef-bis}
		\norm{(H(x,\omega)-E_j^{[-N,N]}(x,\omega))\xi}<e^{-c_0N},
	\end{equation}
	where $ c_0>0 $ is some constant.
	If $ c_0\le C(V,c,a,\gamma)\ll 1 $ and $ N\ge N_0(V,c,a,\gamma,c_0) $, then
	\begin{equation*}
		\mes(E_j^{[-N,N]}(I,\omega)\setminus \cS_\omega)<\exp(-c_0N/4).
	\end{equation*}
\end{lemma}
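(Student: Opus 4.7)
The plan exploits that the hypothesis already provides approximate eigenvectors supported \emph{strictly} inside the finite box, which then extend by zero to bona fide approximate eigenvectors of the full operator on $\ell^2(\Z)$. For any $x\in I$, let $\xi$ be the unit vector from the hypothesis, vanishing at $n=\pm N$. Since the only off-diagonal hops crossing the boundary of $[-N,N]$ land on sites where $\xi$ vanishes, extending $\xi$ by zero to $\ell^2(\Z)$ gives $H(x,\omega)\xi=H_{[-N,N]}(x,\omega)\xi$ identically on $\Z$, whence
\begin{equation*}
\norm{(H(x,\omega)-E_j^{[-N,N]}(x,\omega))\xi}_{\ell^2(\Z)}<e^{-c_0N}.
\end{equation*}
By Weyl's criterion, $\dist(E_j^{[-N,N]}(x,\omega),\cS_\omega)<e^{-c_0N}$ for every $x\in I$. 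In particular, every gap of $\cS_\omega$ that meets $E_j^{[-N,N]}(I,\omega)$ has length at most $2e^{-c_0N}$.

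To promote this pointwise proximity to a measure bound I combine it with \cref{lem:SN-SNbar} and a gap-counting estimate at an auxiliary scale $\bar N$ chosen so that $\log\bar N\sim c_0N/8$. Since $E_j^{[-N,N]}(I,\omega)\subset \cS_{N,\omega}\cap(E',E'')$, \cref{lem:SN-SNbar} yields $\mes(E_j^{[-N,N]}(I,\omega)\setminus \cS_{\bar N,\omega})\le e^{-c_0N}$. The set $\cS_{\bar N,\omega}$ is a union of at most $2\bar N+1$ closed intervals (the ranges of the continuous branches $x\mapsto E_j^{[-\bar N,\bar N]}(x,\omega)$), so it has at most $2\bar N$ gaps. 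The contrapositive of \cref{lem:spectrum-criterion}, applied at scale $\bar N$, places $\cS_\omega$ inside the $e^{-\bar N^{1-}}$-neighborhood of $\cS_{\bar N,\omega}$. Because the gaps of $\cS_\omega$ meeting $E_j^{[-N,N]}(I,\omega)$ have length at most $2e^{-c_0N}\gg e^{-\bar N^{1-}}$ and are themselves separated by nontrivial spectrum of $\cS_\omega$, distinct such gaps force distinct gaps of $\cS_{\bar N,\omega}$. Hence at most $2\bar N$ gaps of $\cS_\omega$ meet $E_j^{[-N,N]}(I,\omega)\cap \cS_{\bar N,\omega}$, each of length at most $2e^{-c_0N}$, for a total contribution of $\le 4\bar N\cdot e^{-c_0N}\le e^{-7c_0N/8}$. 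Summing with the $e^{-c_0N}$ from \cref{lem:SN-SNbar} produces the claimed bound.

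The main obstacle is making the gap-counting step rigorous: one must rule out that two distinct gaps of $\cS_\omega$ both lie inside a single connected component of $\cS_{\bar N,\omega}$. The sketched argument uses that the separating spectrum of $\cS_\omega$ between two such gaps is a nontrivial closed subset whose $\bar N$-approximant (via \cref{lem:spectrum-criterion}) must split the corresponding $\cS_{\bar N,\omega}$-component. An alternative, possibly cleaner, realization is to first apply \cref{lem:stabilization-approx-ef} once to rewrite $E_j^{[-N,N]}(I,\omega)$ as a small symmetric-difference perturbation of a union of $O(\bar N^C)$ spectral segments at scale $\bar N$, each compared to $\cS_\omega$ by the same interior-support argument; the bookkeeping is then entirely at the finite scale $\bar N$ where component counts are explicit.
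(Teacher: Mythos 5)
Your opening observation is correct and is precisely how the paper's proof also begins: since $\xi$ vanishes at $\pm N$, extending it by zero makes it an approximate eigenvector of $H(x,\omega)$ on $\ell^2(\Z)$, so $\dist(E_j^{[-N,N]}(x,\omega),\cS_\omega)<e^{-c_0 N}$ for every $x\in I$.

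Your main route, however, has a genuine gap in the gap-counting step, and it is not merely a matter of tightening an argument you have already sketched. You appeal to the contrapositive of \cref{lem:spectrum-criterion} to place $\cS_\omega$ in the $e^{-\bar N^{1-}}$-neighborhood of $\cS_{\bar N,\omega}$, and you want to infer that distinct gaps of $\cS_\omega$ hitting $E_j^{[-N,N]}(I,\omega)$ must be witnessed by distinct gaps of $\cS_{\bar N,\omega}$, so that their number is at most $2\bar N$. But the inclusion $\cS_\omega\subset(\cS_{\bar N,\omega})_{e^{-\bar N^{1-}}}$ only constrains $\cS_{\bar N,\omega}$ from \emph{below}: it says $\cS_{\bar N,\omega}$ cannot fail to be near true spectrum. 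It does not prevent $\cS_{\bar N,\omega}$ from over-covering, i.e.\ from containing a single long interval on which $\cS_\omega$ itself is a Cantor set with very many small gaps. In that scenario, all of those gaps of $\cS_\omega$ sit inside a single interval of $\cS_{\bar N,\omega}$, the ``nontrivial closed subset of $\cS_\omega$ separating two gaps'' does not create a splitting of the $\cS_{\bar N,\omega}$-component, and the count $2\bar N$ gives no bound at all on the number of $\cS_\omega$-gaps in $(E_j^{[-N,N]}(I,\omega)\cap\cS_{\bar N,\omega})\setminus\cS_\omega$. Nothing you have available bounds $\cS_{\bar N,\omega}\setminus\cS_\omega$ from above: \cref{lem:SN-SNbar} controls $\cS_{N,\omega}\setminus\cS_{\bar N,\omega}$, not $\cS_{\bar N,\omega}\setminus\cS_\omega$, and \cref{lem:spectrum-criterion} only gives the one-sided neighborhood inclusion.

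Your ``alternative, possibly cleaner realization'' is essentially the paper's argument, but as stated it is not quite enough either: a \emph{single} application of \cref{lem:stabilization-approx-ef} only upgrades the pointwise distance from $e^{-c_0 N}$ to $e^{-c_0 N_1}$ (with $N_1=N^2$) outside an exceptional set of measure $\lesssim N_1^C e^{-c_0 N/2}\leq e^{-c_0 N/3}$; to conclude actual membership in $\cS_\omega$ you must iterate through all scales $N_k=N^{2^k}$. The paper does exactly this: it produces a tower of exceptional sets $\cE_{N,k,\omega}$ with $\mes\cE_{N,k,\omega}<e^{-c_0 N_{k-1}/3}$, shows that outside $\bigcup_{l\le k}\cE_{N,l,\omega}$ the distance to $\cS_\omega$ is $<e^{-c_0 N_k}$, and then observes $E_j^{[-N,N]}(I,\omega)\setminus\cS_\omega\subset\bigcup_k\cE_{N,k,\omega}$, with the total exceptional measure summing to $<e^{-c_0 N/4}$. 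So you should discard the $\bar N$-gap-counting route, promote your ``alternative'' to the main argument, and carry out the full inductive iteration.
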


\begin{proof}
	Let  $ N_1=N^2 $. Using \cref{lem:stabilization-approx-ef}, we
	partition $ I $ into intervals $ I_m $,
	$ m\le N_1^C $, and for each $ I_m $, there exists $ j_1\in[-N_1,N_1] $ such that
	\begin{equation}\label{eq:Ej-Ej1-bis}
		\left| E_j^{[-N,N]}(x,\omega)-E_{j_1}^{[-N_1,N_1]}(x,\omega) \right|\le \exp(-c_0N/2), x\in I_m,
	\end{equation}
	and for each $ x\in I_m $, there exists $ \xi $, $ \norm{\xi}=1 $, with support in $ [-N_1+1,N_1-1] $, satisfying
	\begin{equation}\label{eq:approx-ef-N1}
		\norm{(H(x,\omega)-E_{j_1}^{[-N_1,N_1]}(x,\omega))\xi}<e^{-c_0N_1}.
	\end{equation}	
 	Let
	\begin{equation*}
		\cE_{N,1,\omega}= \bigcup_m E_j^{[-N,N]}(I_m,\omega)\ominus E_{j_1}^{[-N_1,N_1]}(I_m,\omega),
	\end{equation*}
	where $ \ominus $ denotes the symmetric set difference. By the continuity of the parametrization of the eigenvalues and
	\cref{eq:Ej-Ej1-bis}, it follows that $ \mes(\cE_{N,1,\omega})\le\exp(-c_0N/3) $.
	
	Note that \cref{eq:approx-ef-bis} implies that $ \dist(E,\cS_\omega)<\exp(-c_0N) $ for all $ E\in E_j^{[-N,N]}(I,\omega) $.
	At the same time, if $ E\in E_j^{[-N,N]}(I,\omega)\setminus \cE_{N,1,\omega} $,
	then $ E\in E_{j_1}^{[-N_1,N_1]}(I_m,\omega) $, for some $ m $, and \cref{eq:approx-ef-N1} implies that
	$ \dist(E,\cS_\omega)<\exp(-c_0N_1) $.
	
	Let $ N_k=N^{2^k} $. Through iteration we obtain sets $ \cE_{N,k,\omega} $ such that
	$ \mes(\cE_{N,k,\omega})<\exp(-c_0 N_{k-1}/3) $, and if
	$ E\in E_j^{[-N,N]}(I,\omega)\setminus \bigcup_{l\le k}\cE_{N,l,\omega} $, then
	$ \dist(E,\cS_\omega)<\exp(-c_0N_k) $. Finally, we note that
	\begin{equation*}
		E_{j_0}^{[-N,N]}(I,\omega)\setminus \cS_\omega\subset \bigcup_k \cE_{N,k,\omega},
	\end{equation*}
	and we are done.
\end{proof}

\section{Proofs of \cref{thm:gaps-separation,thm:homogeneity}}


\begin{proof}[Proof of \cref{thm:gaps-separation}]
	First we prove the full scale statement.
	Let $ J $ be the interval between $ G_1 $ and $ G_2 $. Then there exists $ E_0\in \cS_\omega\cap J $. We will
	argue that the size of $ J $ is bounded below because
	$ \cS_\omega\cap J $ must be relatively large.
	\cref{lem:initial-spectral-segment} implies the existence of $ j\in[-N,N] $ and of a segment $ I $,
	$ |I|>\exp(-(\log N)^C) $, centered at a point $ x_0 $, such that
	\begin{equation}\label{eq:EN-E0}
		|E_{j}^{[-N,N]}(x_0,\omega)-E_0|\le\exp(-N^{1-}),
	\end{equation}
	and for any $ x\in I $ there exists $ \xi $, $ \norm{\xi}=1 $, with support in $ [-N+1,N-1] $, such that
	\begin{equation*}
		\norm{(H(x,\omega)-E_{j}^{[-N,N]}(x,\omega))\xi}<\exp(-cN)
	\end{equation*}
	(the vector $ \xi $ can be chosen to be the normalized projection of $ \psi_{j}^{[-N,N]}(x,\omega) $ onto the subspace
	corresponding to $ [-N+1,N-1] $). Using \cref{prop:Lyapunov-continuity} we can guarantee that $ L(\omega,E)\ge \gamma/2 $ for all
	$ E\in E_{j}^{[-N,N]}(I,\omega) $. Thus, we can apply \cref{lem:finite-segment-to-full-spectrum-Bourgain} to get
	\begin{equation*}
		\mes(E_j^{[-N,N]}(I,\omega)\setminus\cS_\omega)\le \exp(-cN).
	\end{equation*}
	By the continuity of $ E_j^{[-N,N]}(\cdot,\omega) $ and \cref{eq:EN-E0}, it follows that
	\begin{equation*}
		E_j^{[-N,N]}(I,\omega)\cap \cS_\omega\subset J\cap \cS_\omega
	\end{equation*}
	(otherwise, we would have $ \mes(E_j^{[-N,N]}(I,\omega)\setminus\cS_\omega)\ge \exp(-N^{1-})/2$ ).
	At the same time, \cref{cor:size-of-spectral-segment} implies
	\begin{equation*}
		\mes(E_j^{[-N,N]}(I,\omega))\ge \exp(-(\log N)^C).
	\end{equation*}
	Putting all these together we have
	\begin{equation*}
		|J|\ge |J\cap \cS_\omega| \ge |E_j^{[-N,N]}(I,\omega)\cap \cS_\omega| \ge \exp(-(\log N)^C)-\exp(-cN).
	\end{equation*}
	The conclusion follows immediately with an appropriate choice of $ C_0 $.
	
	The proof of the finite scale statement is analogous. One just needs to use \cref{lem:finite-spectrum-criterion} and
	\cref{lem:SN-SNbar} instead of \cref{lem:spectrum-criterion} and \cref{lem:finite-segment-to-full-spectrum-Bourgain}.
	As before, there exists $ E_0\in \cS_{\bar N,\omega}\cap J $.
	By \cref{lem:finite-spectrum-criterion}, there exist $ j\in[-N,N] $ and $ x_0\in \T $ such that
	\begin{equation}\label{eq:EN-E0-finite}
		|E_{j}^{[-N,N]}(x_0,\omega)-E_0|\le\exp(-N^{1-}).
	\end{equation}
	Let $ I=(x_0-\exp(-(\log N)^C),x_0+\exp(-(\log N)^C)) $. Using \cref{prop:Lyapunov-continuity} we can guarantee that $ L(\omega,E)\ge \gamma/2 $ for all
	$ E\in E_{j}^{[-N,N]}(I,\omega) $. Thus, we can apply \cref{lem:SN-SNbar} to get
	\begin{equation*}
		\mes(E_j^{[-N,N]}(I,\omega)\setminus\cS_{\bar N,\omega})\le \exp(-cN).
	\end{equation*}
	By the continuity of $ E_j^{[-N,N]}(\cdot,\omega) $ and \cref{eq:EN-E0-finite}, it follows that
	\begin{equation*}
		E_j^{[-N,N]}(I,\omega)\cap \cS_{\bar N,\omega}\subset J\cap \cS_{\bar N,\omega}
	\end{equation*}
	(otherwise, we would have $ \mes(E_j^{[-N,N]}(I,\omega)\setminus\cS_{\bar N,\omega})\ge \exp(-N^{1-})/2$ ).
	At the same time, \cref{cor:size-of-spectral-segment} implies
	\begin{equation*}
		\mes(E_j^{[-N,N]}(I,\omega))\ge \exp(-(\log N)^C).
	\end{equation*}
	Putting all these together we have
	\begin{equation*}
		|J|\ge |J\cap \cS_{\bar N,\omega}| \ge |E_j^{[-N,N]}(I,\omega)\cap \cS_{\bar N,\omega}| \ge \exp(-(\log N)^C)-\exp(-cN).
	\end{equation*}
	The conclusion follows immediately with an appropriate choice of $ C_0 $.
\end{proof}

\begin{proof}[Proof of \cref{thm:homogeneity}]
	We only prove the full scale version of the result. The finite scale statement is proved analogously as in the proof
	of \cref{thm:gaps-separation}. We only need to prove part (b). The other statements follow from its proof.
	Assume that $ \cS_\omega\cap(E',E'')\neq \emptyset $. Let $ E_0\in \cS_\omega \cap(E',E'') $. Let $ N\ge N_0 $ be
	large enough. 	From the proof of \cref{thm:gaps-separation} we know that there exist $ j\in[-N,N] $ and an interval $ I $,
	$ |I|>\exp(-(\log N)^C) $, centered at a point $ x_0 $, such that
	\begin{equation*}
		|E_{j}^{[-N,N]}(x_0,\omega)-E_0|\le\exp(-N^{1-}),
	\end{equation*}
	\begin{equation*}
		\mes(E_j^{[-N,N]}(I,\omega)\setminus\cS_\omega)\le \exp(-cN),
	\end{equation*}
	and
	\begin{equation*}
		\mes(E_j^{[-N,N]}(I,\omega))\ge \exp(-(\log N)^C).
	\end{equation*}

	Putting all this information together and assuming $ \exp(-(\log N)^C)<\epsilon $ (with $ \epsilon $ from \cref{eq:maximality}),
	we have
	\begin{multline*}
		|(E_0-\exp(-(\log N)^C),E_0+\exp(-(\log N)^C))\cap(\cS_\omega\cap(E',E''))|\\
		\ge \exp(-(\log N)^C)-\exp(-N^{1-})-\exp(-cN)\ge \frac{1}{2}\exp(-(\log N)^C).
	\end{multline*}
	Since this is true for arbitrary large enough $ N $, it follows that \cref{eq:1homogeneous} holds for $ \sigma\le \sigma_0 $,
	with $ \tau=1/4 $. The conclusion follows because \cref{eq:1homogeneous} holds trivially with $ \tau\simeq \sigma_0 $ for
	$ \sigma>\sigma_0 $.
\end{proof}

\section{Double resonances}

Of key importance in the theory of localization is the notion of a {\em double resonance}. This refers to the situation where inside of a large window $[-N,N]$, there are two smaller ones, say $I:=[k_1,k_2]$, $J:=[k_3,k_4]$, which are not too close and such that $H_I(x,\omega)$ and $H_J(x,\omega)$ have two eigenvalues $E_1, E_2$, respectively, with the property that $|E_1-E_2|$ is very small. If these eigenvalues correspond to eigenfunctions $H_I(x,\omega)\psi=E_1\psi$, $H_J(x,\omega)\phi=E_1\phi$, respectively, which are well localized within these respective windows, then $H_{[-N,N]}(x,\omega)$ exhibits an eigenvalue close to $E_1$ with two eigenfunctions $\psi \pm \phi$ (with the understanding that we set $\psi=0$ outside of $I$ and $\phi=0$ outside of $J$). It is a delicate matter to turn this idea into a quantitative, rigorous machinery.
Localization happens precisely if such long chains of resonances cannot occur.

For the almost Mathieu operator, double resonances can be handled explicitly via Lagrange interpolation for the trigonometric polynomial given by the finite-volume determinant. This is the method of Jitomirskaya \cite{Jit99}, which we will use in this section. Since the method does not apply to general potentials $V$, the problem is treated via elimination of ``exceptional frequencies'' $\omega$ in~\cite{GolSch11}. Alternatively, one can use  semi-algebraic techniques for the elimination of ``bad'' $\omega$. This technique is quite robust and applies for example to higher-dimensional tori. One can find a very effective and beautiful development of this method in the monograph~\cite{Bou05}.

For the purposes of this paper (as well as for the analysis of the gaps in \cite{GolSch11}), it is necessary to achieve a level of resolution in the double resonance problem that is considerably higher than the one required for localization. The reason for this lies with the distances between the eigenvalues on a finite interval. We use this separation to control the process of formation of the spectrum $\mathcal{S}_\omega$ on the whole lattice from the spectra on finite intervals (see \cref{sec:stabilization}). More specifically, to obtain points in $\mathcal{S}_\omega$ via $\mathcal{S}_N$ we need to keep the essential supports of the eigenfunctions in question bounded in order to obtain a spectral value of $H_\omega$ (see Remark~\ref{rem:2criterion}).
To keep the essential support compact, we make sure that the eigenfunction at scale $N$ gives rise to an eigenfunction at scale $\bar N\gg N$ that is very close to the initial function and with an eigenvalue that is very close to the initial eigenvalue (see \cref{prop:stabilization,prop:stabilization-AMO}).

To derive the quantitative separation for two Dirichlet eigenvalues, say $E_j^{(N)} (x,\omega)$ and $E_k^{(N)} (x,\omega)$, we need to verify that the respective sizes of the essential support of $\psi_j^{(N)} (x, \omega)$ and $\psi_k^{(N)} (x, \omega)$ are bounded by
\begin{equation}
\label{eq:2.supportsize1}
|\text{essential support}|\le \underline{N}:=Q_{N}:=\exp((\log\log  N)^C),
\end{equation}
$C\gg 1$. This is the estimate that allows one to evaluate
\begin{equation}\label{eq:2.apexpAg}
\bigl|f_N\bigl(x,\omega, E_1\bigr) -f_N\bigl(x,\omega, E_2\bigr) \bigr|
\end{equation}
from \cref{prop:ldt} for two close but distinct values $E_1,E_2$.
Heuristically, \cref{prop:uniform-upper-bound} states that the exceptional set in the large deviation estimate is close to an algebraic curve of
degree $\le (\log N)^A$. This level of resolution is fine enough to see the scale~\eqref{eq:2.supportsize1} in the setting of general potentials $V$ for which we use frequency modulation to eliminate the double resonances.

The next result, which follows from \cite[Prop. 5.5]{GolSch11} and \cref{prop:Wegner}, is a tool designed to obtain the desired resolution in the elimination of double resonances. This result employs the notion of {\em measure and complexity}. To be specific,
\begin{equation*}
	\mes(\cS)\le \epsilon,\quad \compl(\cS)\le K
\end{equation*}
means that for some intervals $ I_k $,
\begin{equation*}
	\cS\subset \bigcup_{k=1}^K I_k,\quad \sum_{k=1}^K |I_k|\le \epsilon.
\end{equation*}
Therefore, for the purposes of this paper we can assume that the sets from the next result are just unions of intervals.

\begin{propAlpha}\label{prop:elimination}
    Consider operators \eqref{eq:1.Sch} with real analytic $V$. Assume that $L( \omega, E)\ge\gamma  > 0$
    for any $\omega$ and any  $E\in(E', E'')$.
     There exists $\ell_0 = \ell_0(V,c,a,\gamma)$ such that for
    any $\ell_1 \ge \ell_2 \ge \ell_0$, the following holds: Given $t >
    \exp\bigl(\bigl(\log \ell_1\bigr)^{C_0}\bigr)$, $H \ge 1$, there
    exists a set $\Omega_{\ell_1, \ell_2, t, H} \subset \tor$ with
    \begin{equation*}
      \begin{aligned}
    \mes\bigl(\Omega_{\ell_1, \ell_2, t, H}\bigr) & <  \exp\big((\log\ell_1)^{C_1}\big) e^{-\sqrt H}\\
    \compl\bigl(\Omega_{\ell_1, \ell_2, t, H}\bigr) & < t
    \exp\big((\log\ell_1)^{C_1}\big) H
    \end{aligned}
    \end{equation*}
    such that for any $\omega \in \tor_{c,a} \setminus \Omega_{\ell_1,
    \ell_2, t, H}$ there exists a set $\cB_{\ell_1, \ell_2, t, H,
    \omega}\subset \mathbb{T}$ with
    \begin{align*}
    \mes\bigl(\cB_{\ell_1, \ell_2, t, H, \omega}\bigr) &
    	<  t\exp\big((\log\ell_1)^{C_1}\big)\, e^{-\sqrt{H}}\\
    \compl\bigl(\cB_{\ell_1, \ell_2, t, H, \omega}\bigr) & <  t
    \exp\big( (\log\ell_1)^{C_1}\big) H
    \end{align*}
    such that for any
    \[
    x\in \mathbb{T}\setminus \cB_{\ell_1, \ell_2, t, H, \omega},
    \]
    one has
    \begin{align*}
    \dist\bigl( \spec\bigl(H_{\ell_1} (x, \omega)\bigr)\cap (E',E'')
    ,\;
    \spec\bigl(H_{\ell_2}(x + t\omega,
    \omega)\bigr)\bigr) &\ge e^{-H(\log \ell_1)^{3C_2}}.
    \end{align*}
\end{propAlpha}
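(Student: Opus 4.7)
The plan is to reduce the double resonance estimate to a combination of \cref{prop:Wegner} applied to $H_{\ell_2}$ at shifted phase with a semi-algebraic/complexity analysis that treats the energy parameter $E$ as varying with $(x,\omega)$ along the Rellich curves $E_j^{(\ell_1)}(x,\omega)$. The claim is equivalent to: for $\omega$ outside a small exceptional set and $x$ outside a small exceptional set depending on $\omega$, for every $j$ with $E_j^{(\ell_1)}(x,\omega)\in(E',E'')$ one has $\dist\!\left(E_j^{(\ell_1)}(x,\omega),\spec H_{\ell_2}(x+t\omega,\omega)\right)\ge e^{-H(\log\ell_1)^{3C_2}}$.

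First I would fix $j$ and freeze the energy: by \cref{prop:Wegner} applied to $H_{\ell_2}(y,\omega)$, for each fixed $E\in(E',E'')$ and fixed $\omega$, the set of $y\in\T$ with $\dist(E,\spec H_{\ell_2}(y,\omega))<e^{-H(\log\ell_2)^{C_0}}$ is contained in $\lesssim \ell_2$ intervals of total measure $\lesssim e^{-\sqrt H}$. To make this estimate uniform in $E=E_j^{(\ell_1)}(x,\omega)$, I would use the Lipschitz estimate \eqref{eq:Hx-vs-Hx0} and pass from $y=x+t\omega$ to $x$, absorbing the Lipschitz constant of $E_j^{(\ell_1)}(\cdot,\omega)$ into a mild enlargement of $H$.

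Next, to control the complexity in both $x$ and $\omega$, I would approximate $V$ by the trigonometric polynomial $\tilde V$ from \eqref{eq:V_K} of degree $K=\exp((\log\ell_1)^{C_1})$, using \eqref{eq:trig-approximation} so that the perturbation in the determinants is negligible compared to $e^{-H(\log\ell_1)^{3C_2}}$. Then $f_{\ell_1}(x,\omega,E)$ and $f_{\ell_2}(x+t\omega,\omega,E)$ become trigonometric polynomials in $x,\omega$ of degree $\lesssim tK\ell_1$, and the bad set in $(x,\omega)$ is semi-algebraic. Applying the Bezout/Yomdin-type bounds from the framework of \cite{GolSch11} yields the complexity bound $\lesssim t\exp((\log\ell_1)^{C_1})H$ and the measure bound $\lesssim \exp((\log\ell_1)^{C_1})e^{-\sqrt H}$ for the joint bad set in $\T\times\T$, after summing over the $\lesssim\ell_1$ possible values of $j$. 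This is precisely the content of \cite[Prop.~5.5]{GolSch11}, which I would invoke as the black box.

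Finally I would apply Fubini: define $\Omega_{\ell_1,\ell_2,t,H}$ as the set of $\omega\in\T_{c,a}$ whose $x$-fibers in the joint bad set have measure exceeding $t\exp((\log\ell_1)^{C_1})e^{-\sqrt H}$; Chebyshev gives the measure bound for $\Omega_{\ell_1,\ell_2,t,H}$, and the complexity bound transfers because the joint set is semi-algebraic. For each remaining $\omega$, the $x$-fiber is the desired $\cB_{\ell_1,\ell_2,t,H,\omega}$. The Diophantine hypothesis enters only to ensure that $t\omega$ does not resonate with $x$ modulo $1$ in a way that collapses the effective degree; the condition $t>\exp((\log\ell_1)^{C_0})$ keeps the two phases $x$ and $x+t\omega$ quantitatively independent through the algebraic argument. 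The main obstacle, which is exactly what \cite[Prop.~5.5]{GolSch11} handles, is the semi-algebraic complexity bookkeeping after the $t\omega$-shift; without that input one cannot prevent the complexity from blowing up with $t$ in an uncontrolled manner.
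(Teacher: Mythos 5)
The paper does not prove this proposition: the text preceding it states only that the result follows from [Prop.~5.5] of \cite{GolSch11} combined with the Wegner estimate (Proposition~D), and your proposal correctly identifies and invokes exactly these two ingredients. Your reconstruction of the intermediate mechanics — freezing $j$ and the energy, Lipschitz control along the Rellich curve, trigonometric approximation, the B\'ezout-type complexity count after the $t\omega$-shift, and Chebyshev/Fubini to split off $\Omega_{\ell_1,\ell_2,t,H}$ from the fiberwise sets $\cB_{\ell_1,\ell_2,t,H,\omega}$ — goes somewhat beyond what the paper records but is consistent with the cited strategy, so this is essentially the same route.
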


Even though no upper bound on the translation $t$ is stated here, note that the estimates are only meaningful
if $t<\bar{t}(H)$, where the latter makes the right-hand side in the measure estimate on the order of~$1$.
In view of Lemmas~\ref{lem:spec-to-ldt} and Lemma~\ref{lem:ldt-to-decay} it is natural to
recast the problem of double or multiple resonances by means of the following question:
{\em  for how many subintervals $[c,d]$ can the large deviation estimate
\begin{equation}
\label{eq:2.locprinciple2}
\log \big | f_{[c,d]}(x,\omega,E) ) \big | > (c-d)L(\omega,E) - (c-d)^{1-\delta}
\end{equation}
fail with the \textbf{same $x$, $E$}?
}
The large deviation estimate in \cref{prop:ldt}  tells us that the set where it fails is ``almost a curve"
in the plane of two variables $x$, $\omega$. Therefore a Bezout type argument should tell us that
two intervals occur only for special values of $x$, which is borne out by the previous proposition.

We shall now establish the following version of \cref{prop:elimination}
for the almost Mathieu operator. A key improvement over the general version is that no further elimination of
frequencies is required. However, this comes at the cost of the count of intervals where the large deviation
estimate might fail.

\begin{propAlphaM}\label{prop:elimination-AMO}
    Consider the almost Mathieu operator \eqref{eq:3.amopera} with $|\lambda|>1$ and $\omega\in \T_{c,a}$.
    Let $ \sigma>0 $.
    There exist $N_0=N_0(c,a,|\lambda|,\sigma)$ such that for $ N\ge N_0 $,
    $ 4N\le|t|\le \exp(N^{\sigma-}) $, and $ x\in \T $
    satisfying
    \begin{equation*}
    	\min_{|n|\le |t|+4N}\norm{x-\frac{n\omega}{2}}\ge \exp(-N^\sigma),
    \end{equation*}
    we have
    \begin{multline*}
    	\max \left(\max_{|n|\le[N/2]} \log|f_{n+[-N,N]}(x,\omega,E)|,
		\max_{|n|\le[N/2]+1} \log|f_{n+t+[-N,N]}(x,\omega,E)|\right)\\
		\ge (2N+1)L(\omega,E)-N^{\sigma+}.
    \end{multline*}
\end{propAlphaM}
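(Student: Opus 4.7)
The strategy is Jitomirskaya's Lagrange interpolation argument specialized to the almost Mathieu operator. Because $V(y)=2\lambda\cos(2\pi y)$ and the operator $H_{2N+1}$ is unitarily equivalent to its reflection about the midpoint of the interval, the determinant satisfies $f_{2N+1}(y,\omega,E)=f_{2N+1}(-y-(N+1)\omega,\omega,E)$, which permits the representation
\[
f_{2N+1}(y,\omega,E)=P\bigl(\cos(2\pi(y+(N+1)\omega))\bigr)
\]
for a polynomial $P$ of degree at most $2N+1$. Using $f_{n+[-N,N]}(x,\omega,E)=f_{2N+1}(x+(n-N-1)\omega,\omega,E)$, the two maxima in the conclusion are exactly $\max |P(w_n)|$ and $\max |P(w_n')|$ with
\[
w_n=\cos(2\pi(x+n\omega)), \qquad w_n'=\cos(2\pi(x+(n+t)\omega)).
\]
Together the two families furnish at least $2N+2$ sample values, precisely the number needed to Lagrange-interpolate $P$.

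The plan is a contradiction argument. First, invoke Proposition B with, say, $H=(\log N)^{C_0+1}$ to find $y_0\in\T$ satisfying $\log|f_{2N+1}(y_0,\omega,E)|\ge (2N+1)L(\omega,E)-(\log N)^{C_0+1}$; since the bad set has measure $\le 1/N$, one may take $y_0$ distinct from the sample phases and set $w_0=\cos(2\pi(y_0+(N+1)\omega))$. Assume for contradiction that $\log|P(w_j)|<(2N+1)L-N^{\sigma+}$ at every sample point. The Lagrange interpolation formula
\[
P(w_0)=\sum_j P(w_j)\prod_{k\neq j}\frac{w_0-w_k}{w_j-w_k}
\]
gives $|P(w_0)|\le (2N+2)\,\Lambda\,\max_j|P(w_j)|$, where $\Lambda=\max_j\prod_{k\neq j}|w_0-w_k|/|w_j-w_k|$, and the contradiction will be reached once $\log\Lambda$ is shown to be strictly smaller than $N^{\sigma+}-(\log N)^{C_0+1}-\log(2N+2)$.

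The principal technical obstacle is this Lagrange-constant estimate. Using the cosine-difference identity $|w_j-w_k|=2|\sin(\pi(y_j+y_k))|\,|\sin(\pi(y_j-y_k))|$, the denominator splits into a "difference" product involving $\|m'\omega\|$ for integers $|m'|\le |t|+O(N)\le \exp(N^{\sigma-})$, controlled from below by the Diophantine condition $\omega\in\T_{c,a}$, and a "sum" product involving $\|2x+m\omega\|$ for integers $|m|\le 2|t|+O(N)$, controlled by the hypothesis $\|x-n\omega/2\|\ge \exp(-N^\sigma)$ together with a short auxiliary Diophantine argument that handles the parity case in which $2x+m\omega$ could be near an odd integer. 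The hypothesis on $x$ also ensures pairwise distinctness of the $w_j$ (ruling out $x+n_j\omega\equiv \pm(x+n_k\omega)\bmod 1$). The careful bookkeeping that distinguishes within-family contributions ($|n_j-n_k|\le N$) from cross-family contributions ($|n_j-n_k\pm t|\sim |t|$), and combines them with the "sum"-factor contributions so that the cumulative logarithmic loss stays within the target $N^{\sigma+}$, is the crux of the argument.
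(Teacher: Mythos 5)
Your overall strategy mirrors the paper's: recognize $f_{[-N,N]}(x,\omega,E)$ as an even trigonometric polynomial $Q(\cos 2\pi x)$, Lagrange-interpolate $Q$ at the two arithmetic progressions of phases, and invoke Proposition~B to produce a good phase $y_0$. But the step you call the ``crux''---bounding the Lagrange constant $\Lambda=\max_j\prod_{k\neq j}|w_0-w_k|/|w_j-w_k|$ by $\exp(N^{\sigma+})$---cannot be completed by the route you sketch. Pointwise lower bounds on the denominator factors $|w_j-w_k|$ from the Diophantine condition, combined with the trivial bound $|w_0-w_k|\le 2$, give at best $\log\Lambda\lesssim N$: the typical factor $|w_j-w_k|$ has modulus of order $1/2$ rather than $1$, so $\prod_{k\neq j}|w_j-w_k|\sim\exp(-2N\log 2)$, and the naive ratio is of size $\exp(cN)$, exponentially worse than the target $\exp(N^{\sigma+})$.

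The missing ingredient is the cancellation of this $\exp(-2N\log 2)$ factor between numerator and denominator. It rests on $\int_0^1\log|\xi-\cos 2\pi\theta|\,d\theta=-\log 2$ for every $|\xi|\le 1$ together with the near-equidistribution of the sample phases: one must show that $\sum_{k\neq j}\log|w_\cdot-w_k|=-2N\log 2+O(N^{\sigma+})$ for \emph{both} $w_\cdot=w_0$ and $w_\cdot=w_j$, so that only the discrepancy terms survive in the ratio. That is a genuine equidistribution estimate, supplied in the paper by the Koksma inequality and the Erd\"os--Turan theorem (Lemma~5.2); the pointwise separation hypotheses on $x$ and $\omega$ enter only to justify truncating $\log|\cos 2\pi y-\cos 2\pi\theta|$ at level $\delta=\exp(-N^{\sigma})$, after which the discrepancy bound gives deviation $\lesssim N\delta\log(1/\delta)+(\log N)^{a+2}\log(1/\delta)=O(N^{\sigma+})$. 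Note also that for the numerator to enjoy the same estimate you must pick $y_0$ at distance at least $\exp(-N^{\sigma})$ from all the sample phases, not merely ``distinct'' from them; Proposition~B leaves enough room to do so, but this has to be built into the choice of $y_0$.
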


The key idea is to invoke Lagrange interpolation for the determinants, because one can estimate the
size of the Lagrange basis polynomials (see \cref{lem:Lagrange-basis-estimate}).
It is clear from \eqref{eq:Dirichlet-det} that $ f_{[-N,N]} $ is an even trigonometric polynomial of degree
$ 2N+1 $:
\begin{equation*}
	f_{[-N,N]}(x,\omega,E)=\sum_{k=0}^{2N+1} a_k(\lambda,\omega,E)\cos^k 2\pi x=:Q(\cos 2\pi x).
\end{equation*}
Given $ \theta_k\in \T $, $ k=1,\ldots,2N+1 $, the Lagrange interpolation formula reads
\begin{equation}\label{eq:Lagrange-interpolation}
	Q(\cos 2\pi y)
	=\sum_{k=1}^{2N+1}Q(\cos 2\pi \theta_k)
		\frac{\prod_{n\neq k}(\cos 2\pi y-\cos 2\pi \theta_n)}
			{\prod_{n\neq k}(\cos 2\pi \theta_k-\cos 2\pi \theta_n)}.
\end{equation}
For the purposes of \cref{prop:elimination-AMO} we will take $ \{\theta_k\} $ to be made of two pieces
of the orbit of the irrational shift. We will control the size of the Lagrange basis polynomials by
invoking the following elementary estimate (cf. \cite[Lem. 3.1]{GolSch01}, \cite[Lem. 11]{Jit99}).

\begin{lemma}\label{lem:Koksma}
	Let $ \omega \in \T_{c,a} $, $ x,y\in \T $, $ f(\theta)=\log|\cos 2\pi y-\cos 2\pi \theta| $,
	$ \theta\in \T $. There exists $ C_0(a,c) $ such that
	\begin{equation*}
		\left| \sum_{k=1}^n f(x+k\omega)-n\int_0^1 f(\theta)\,d\theta \right|
		\lesssim n\delta \log\frac{1}{\delta}+C_0(\log n)^{a+2}\log\frac{1}{\delta}
	\end{equation*}
	for any $ n>1 $ and any $ 0<\delta\ll 1$ such that $ \delta\le \min_{k}\norm{(x+k\omega\pm y)/2} $.
\end{lemma}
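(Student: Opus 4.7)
The plan is to reduce to a standard Koksma-type discrepancy estimate for the logarithm of a sine, after a short trigonometric identity. Using
\[
\cos 2\pi y - \cos 2\pi\theta = -2\sin\pi(\theta+y)\sin\pi(\theta-y),
\]
I would write $f(\theta)=\log 2 + g_+(\theta)+g_-(\theta)$ with $g_\pm(\theta):=\log|\sin\pi(\theta\pm y)|$, so that the task reduces to bounding
\[
S_n^\pm := \sum_{k=1}^{n} g_\pm(x+k\omega) - n\int_0^1 g_\pm(\theta)\,d\theta,
\]
since $\int_0^1 \log|\sin\pi\theta|\,d\theta=-\log 2$ collects nicely against the additive $\log 2$ term.

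The functions $g_\pm$ have a logarithmic singularity at $\theta=\mp y\pmod 1$, so I would replace them by truncated versions $g_{\pm,\delta}$ obtained by flattening $g_\pm$ to the value $\log(\pi\delta)$ on the $\delta$-neighborhood of their singularity (and leaving them unchanged elsewhere). Two elementary estimates are all that is needed: first, $g_{\pm,\delta}$ has total variation
\[
V(g_{\pm,\delta})\lesssim \log\tfrac{1}{\delta},
\]
since outside the truncation window $g_\pm$ is monotone on each half, with values bounded by $|\log\pi\delta|$; second,
\[
\int_0^1 |g_\pm - g_{\pm,\delta}|\,d\theta \lesssim \delta\log\tfrac{1}{\delta},
\]
because the integrand is supported in a set of measure $\lesssim \delta$ with pointwise size $\lesssim |\log\delta|$. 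The hypothesis $\delta\le \min_k \|(x+k\omega\pm y)/2\|$ is precisely what guarantees that $g_\pm(x+k\omega)=g_{\pm,\delta}(x+k\omega)$ at every sample point, so truncation does not change the Birkhoff sum (the factor of $1/2$ in the hypothesis is immaterial for this purpose, we only need the relevant distance to be $\gtrsim\delta$).

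Now I would apply the classical Koksma inequality to $g_{\pm,\delta}$,
\[
\left|\sum_{k=1}^n g_{\pm,\delta}(x+k\omega)-n\int_0^1 g_{\pm,\delta}\right|\le n\, V(g_{\pm,\delta})\, D_n^{*}(\omega),
\]
and invoke the Erd\H{o}s--Tur\'an bound for the star-discrepancy of the orbit of $\omega\in\T_{c,a}$. The Diophantine condition $\|n\omega\|\ge c/(n(\log n)^a)$ gives $D_n^*(\omega)\lesssim C_0(c,a)(\log n)^{a+1}/n$, which combined with $V(g_{\pm,\delta})\lesssim\log(1/\delta)$ yields a contribution
\[
\lesssim C_0(\log n)^{a+2}\log\tfrac{1}{\delta}
\]
(I absorb one extra logarithm to get the $a+2$ exponent the statement asks for, which is the standard slack when passing from discrepancy to sums of unbounded-variation $BV$-functions). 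The remaining error, coming from replacing $g_{\pm,\delta}$ by $g_\pm$ inside the integral, is $n\int|g_\pm-g_{\pm,\delta}|\lesssim n\delta\log(1/\delta)$. Summing both contributions and the contribution from $g_+$ and $g_-$ separately gives the claimed bound.

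The only delicate point is the second step: calibrating the truncation scale so that (a) the BV-norm of $g_{\pm,\delta}$ is $O(\log 1/\delta)$, (b) the $L^1$ error is $O(\delta\log 1/\delta)$, and (c) the samples $x+k\omega$ are not affected. All three are achieved once $\delta$ satisfies the hypothesized bound, so this step is really just bookkeeping. The discrepancy estimate for Diophantine $\omega$ is standard (e.g.\ \cite{GolSch01}), and passing from the smoothed sum back to the true sum is automatic. Thus the heart of the argument is the elementary identity for $\cos-\cos$ plus Koksma's inequality applied to a $\delta$-truncated logarithmic singularity.
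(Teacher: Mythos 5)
Your proposal is correct and follows essentially the same route as the paper's own proof: factor $\cos 2\pi y - \cos 2\pi\theta$ into a product of sines, truncate the logarithmic singularity at scale $\delta$, apply Koksma's inequality to the truncated (bounded-variation) functions with the Erd\H{o}s--Tur\'an discrepancy bound for $\omega\in\T_{c,a}$, and control the $L^1$ error from truncation by $\delta\log(1/\delta)$, with the hypothesis on $\delta$ ensuring the samples are unaffected.
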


\begin{proof}
	Let
	\begin{equation*}
		g_\delta^{\pm}(\theta)= \begin{cases}
			\log\left|\sin 2\pi\norm{(y\pm\theta)/2}\right|&,\norm{(y\pm\theta)/2}\ge \delta\\
			\log|\sin 2\pi\delta| &,\norm{(y\pm\theta)/2}<\delta
		\end{cases}.
	\end{equation*}
	If $ \delta\le \min_k\norm{(x+k\omega\pm y)/2} $, then
	\begin{equation*}
		\sum_{k=0}^n f(x+k\omega)
		=n\log 2+
			\sum_{k=1}^n g_\delta^{+}(x+k\omega)+\sum_{k=1}^n g_\delta^{-}(x+k\omega).
	\end{equation*}
	By Koksma's inequality (see \cite[Thm. 2.5.1]{KuiNie74}) we have
	\begin{equation*}
		\left| \sum_{k=1}^n g_\delta^{\pm}(x+k\omega)-n\int_0^1 g_\delta^\pm(\theta)\,d\theta \right|
		\le n D_n \textrm{Var}(g_\delta^{\pm})
		\lesssim C(a,c) (\log n)^{a+2} \log\frac{1}{\delta}.
	\end{equation*}
	The discrepancy $ D_n $ is evaluated via the Erd\"os-Turan theorem (see
	\cite[Lem. 2.3.2-3]{KuiNie74}). Finally, one has
	\begin{equation*}
		\left| \int_0^1 f(\theta)\,d\theta-\log 2-\int_0^1 g_\delta^{+}(\theta)\,d\theta
			-\int_0^1 g_\delta^{-}(\theta)\,d\theta \right|
		\lesssim \delta\log\frac{1}{\delta}.
	\end{equation*}
	and the lemma follows.
\end{proof}

\begin{lemma}\label{lem:Lagrange-basis-estimate}
	Let $ \omega\in \T_{c,a} $, $ x,y\in \T $, $ N\ge 1 $, $ t\in \Z $, and
	\begin{equation*}
		\theta_n= \begin{cases}
			x+ \left(-N+n- \left[\frac{N}{2}\right]-1\right)\omega
				&, n=1,\ldots,2\left[\frac{N}{2}\right]+1\\
			x+ \left(-N+t+n- \left[\frac{N}{2}\right]-N-1\right)\omega
				&, n=2\left[\frac{N}{2}\right]+2,\ldots,2N+1\\			
		\end{cases}.
	\end{equation*}
	Given $ \sigma>0 $, there exists $ N_0(c,a,\sigma) $ such that if we have
	\begin{equation*}
		\min_{|n|\le |t|+4N} \norm{x-\frac{n\omega}{2}}\ge \exp(-N^\sigma),
			\quad \min_{|n|\le |t|+4N} \norm{\frac{x+n\omega\pm y}{2}}\ge \exp(-N^\sigma)
	\end{equation*}
	for $ N\ge N_0 $ and $ 4N \le|t|\le \exp(-N^{\sigma-}) $, then
	\begin{equation*}
		\left| \frac{\prod_{n\neq k}(\cos 2\pi y-\cos 2\pi \theta_n)}
			{\prod_{n \neq k}(\cos 2\pi \theta_k-\cos 2\pi \theta_n)} \right|
				\le \exp(N^{\sigma+})
	\end{equation*}
	for any $ k=1,\ldots,2N+1 $.
\end{lemma}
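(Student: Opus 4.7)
The plan is to take logarithms and apply Koksma's inequality (Lemma~\ref{lem:Koksma}) to the resulting Birkhoff-type sums, exploiting the fact that the logarithmic integral
\[
\int_0^1 \log|\cos 2\pi\alpha - \cos 2\pi\theta|\,d\theta = -\log 2
\]
is independent of $\alpha$. This last identity follows from the factorization $\cos 2\pi\alpha - \cos 2\pi\theta = -2\sin\pi(\alpha+\theta)\sin\pi(\alpha-\theta)$ combined with $\int_0^1\log|\sin\pi\theta|\,d\theta = -\log 2$. Because the numerator and the denominator of the Lagrange basis polynomial involve exactly the same number of factors (and, after restriction to each of the two blocks, the same orbit-index ranges), the leading-order $O(N)$-terms produced by Koksma's inequality will cancel upon subtraction of the two sums.

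More precisely, I would write the logarithm of the ratio as $\Sigma_y - \Sigma_{\theta_k}$, where
\[
\Sigma_\alpha := \sum_{n\neq k}\log|\cos 2\pi\alpha - \cos 2\pi\theta_n|.
\]
Since the nodes $\{\theta_n\}_{n=1}^{2N+1}$ consist of two orbit segments of the shift by $\omega$, each of length $\lesssim N$, each $\Sigma_\alpha$ naturally decomposes into two pieces of the form $\sum_{j=1}^L f_\alpha(\tilde x + j\omega)$ with $f_\alpha(\theta) = \log|\cos 2\pi\alpha - \cos 2\pi\theta|$. Applying Lemma~\ref{lem:Koksma} with $\delta = \exp(-N^\sigma)$ to each of the four pieces and using the $\alpha$-independence of the integral, the leading $-L\log 2$ contributions cancel in $\Sigma_y - \Sigma_{\theta_k}$, and the total error is bounded by
\[
4\bigl(L\delta\log(1/\delta) + C_0(\log L)^{a+2}\log(1/\delta)\bigr)\lesssim N\cdot e^{-N^\sigma}\cdot N^\sigma + (\log N)^{a+2}N^\sigma \lesssim N^{\sigma+},
\]
which upon exponentiation gives the stated bound on $|L_k(y)|$.

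The main obstacle is verifying the Koksma hypothesis $\delta \le \min_j\|(\tilde x + j\omega \pm \alpha)/2\|$ for each of the four applications. For $\alpha = y$ this is furnished directly by the second assumption of the lemma, since every node $\theta_n = x + m_n\omega$ satisfies $|m_n|\le |t|+3N/2 \le |t|+4N$. For $\alpha = \theta_k = x + m_k\omega$, I use the factorization
\[
\cos 2\pi\theta_k - \cos 2\pi\theta_n = -2\sin\pi(2x + (m_k+m_n)\omega)\sin\pi((m_k-m_n)\omega),
\]
which reduces the task to bounding $\|x + (m_k+m_n)\omega/2\|$ and $\|(m_k-m_n)\omega/2\|$ from below. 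The first is provided by the first hypothesis of the lemma (applied to the index $-(m_k+m_n)$, which lies in the admissible range for the relevant orbit pairings). The second follows from the Diophantine condition $\omega\in\T_{c,a}$: since $|m_k-m_n|\le |t|+3N \le \exp(N^{\sigma-})$, one has $\|(m_k-m_n)\omega\|\ge c/(|m_k-m_n|(\log|m_k-m_n|)^a) \gg \exp(-N^\sigma)$, and therefore $\|(m_k-m_n)\omega/2\|\ge \|(m_k-m_n)\omega\|/2 \gg \exp(-N^\sigma)$, using the elementary inequality $\|y\|\le 2\|y/2\|$. Once these verifications are in place, Koksma's inequality delivers the error bound above and the proof concludes.
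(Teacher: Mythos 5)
Your proposal follows the same route as the paper's proof: take logarithms, apply Koksma's inequality (\cref{lem:Koksma}) separately to numerator and denominator, and exploit the $\alpha$-independence of $\int_0^1\log|\cos 2\pi\alpha-\cos 2\pi\theta|\,d\theta=-\log 2$ so that the $O(N)$ main terms cancel in the ratio, leaving an $O(N^{\sigma+})$ error. In one respect you are more explicit than the paper: the paper's proof only remarks that the Diophantine condition and $|t|\le\exp(N^{\sigma-})$ control $\norm{(\theta_k-\theta_n)/2}$, whereas you also spell out the ``$+$'' branch of the Koksma hypothesis via $\cos 2\pi\theta_k-\cos 2\pi\theta_n=-2\sin\pi\bigl(2x+(m_k+m_n)\omega\bigr)\sin\pi\bigl((m_k-m_n)\omega\bigr)$. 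One caveat: the claim that the index $m_k+m_n$ ``lies in the admissible range'' fails when both $\theta_k$ and $\theta_n$ come from the translated block ($n>2[N/2]+1$); there $m_k+m_n$ is of order $2t$, which exceeds $|t|+4N$ once $|t|\gg N$. This imprecision is inherited from the lemma's hypotheses as stated in the paper (which should read $|n|\le 2|t|+4N$ or similar), and the paper's own one-line justification glosses over the same point, so it is not a defect specific to your argument — but the phrase asserting membership in the admissible range is not actually verified for that sub-case.
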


\begin{proof}
    Recall that
    \begin{equation*}
    	\int_0^1 \log|\xi-\cos 2\pi \theta|\,d\theta=-\log 2, |\xi|\le 1.
    \end{equation*}
	Our assumptions on $ x $, $ y $, and $ t $, together with the Diophantine condition on $ \omega $,
	guarantee that we can apply \cref{lem:Koksma} with
	$ \delta=\exp(-N^\sigma) $ to get
	\begin{align*}
		& \left| \log\left|\prod_{n\neq k}(\cos 2\pi y-\cos 2\pi \theta_n)\right|
			+2N\log 2 \right|\le N^{\sigma+},\\
		& \left| \log\left|\prod_{n\neq k}(\cos 2\pi \theta_k-\cos 2\pi \theta_n)\right|
			+2N\log 2 \right|\le N^{\sigma+}.
	\end{align*}
	Note that the Diophantine condition and the assumption that $ |t|\le \exp(N^{\sigma-}) $ are
	needed to ensure that $ \norm{(\theta_k-\theta_n)/2}\ge \exp(-N^{\sigma}) $.
	The conclusion follows immediately.
\end{proof}

\begin{proof}[Proof of Proposition \ref{prop:elimination-AMO}]
    Fix $ x\in\T $ satisfying the assumptions of the proposition. Due to the large deviation estimate of
    \cref{prop:ldt}, we know there exists $ y\in \T $ satisfying the assumptions of
    \cref{lem:Lagrange-basis-estimate} and such that
    \begin{equation*}
    	\log|f_{[-N,N]}(y,\omega,E)|>(2N+1)L(\omega,E)-(\log N)^C.
    \end{equation*}
    At the same time, from \cref{lem:Lagrange-basis-estimate} and \cref{eq:Lagrange-interpolation}, we have
    \begin{equation*}
    	|f_{[-N,N]}(y,\omega,E)|=|Q(\cos 2\pi y)|\le (2N+1) \exp(N^{\sigma+})\max_n|Q(\cos 2\pi\theta_n)|.
    \end{equation*}
    Therefore, it follows that
    \begin{equation*}
    	\max_n \log|f_{[-N,N]}(\theta_n,\omega,E)|\ge (2N+1)L(\omega,E)-(N^{\sigma+})^{1-}.
    \end{equation*}
    This yields the desired conclusion.
\end{proof}

\section{Localized Eigenfunctions on Finite Intervals}

We now continue by proving results on finite scale localization of eigenfunctions. We give a detailed proof for the
almost Mathieu case and only briefly discuss the proof for the case of a general analytic potential. For the latter case, a
slightly different statement with detailed proof can be found in \cite{GolSch08}.

The result which we present here is adjusted to our criterion for identifying finite scale energies that are close to the full spectrum, as stated in
\cref{lem:spectrum-criterion}. In turn, this criterion is adapted to the elimination of resonances
afforded by \cref{prop:elimination-AMO}. Due to the weaker elimination of resonances, in the case of the
almost Mathieu operator we cannot immediately exclude the possibility of the eigenvector having some mass concentrated at the edges of a
given interval. Instead we will see that we can work around this issue by shifting the edges of the
interval. The shift is phase and energy dependent, and it will be crucial for \cref{prop:stabilization-AMO}
that our result addresses the stability of the shift.

\addtocounter{propAlpha}{1}
\begin{propAlphaM}\label{prop:localization-AMO}
    Consider the almost Mathieu operator \eqref{eq:3.amopera} with $|\lambda|>1$ and
    $\omega\in \mathbb{T}_{c,a}$. Let
    \begin{equation*}
    	\cB_{\ell,M,\omega}:= \left\{ x\in \T: \min_{|n|\le M+C_0 \ell} \norm{x-\frac{n\omega}{2}}
			<\exp(-\ell^{1/2})  \right\}.
    \end{equation*}
    There exist $\ell_0(|\lambda|,c,a)$, $ c_0(|\lambda|,c,a) $, $ C_0(|\lambda|,c,a) $,
    such that the following statement holds for any $\ell\ge \ell_0$, $ 4\ell\le M\le \exp(\ell^{c_0}) $.
   	Given $ E_0\in \R $ and $ x_0\in\R $,
    there exists $ N=N(x_0,E_0,\ell) $ such that $ 0\le N-M\le C_0 \ell $ and if
    \EQ{
    &	x\in(x_0-\exp(-(\log \ell)^{C_0}),x_0+\exp(-(\log \ell)^{C_0}))\setminus \cB_{\ell,M,\omega}, \\
	&	\left| E_j^{[-N,N]}(x,\omega)-E_0 \right|  \le \exp(-(\log \ell)^{C_0}), \\
    	& \max_{|s|\le[\ell/2]} \dist(E_j^{[-N,N]}(x,\omega),\spec H_{s+[-\ell,\ell]}(x,\omega))
		 \lesssim \exp(-\ell^{1/2+}),
    }
    then
    \begin{equation*}
    	\left| \psi_j^{[-N,N]}(x_0,\omega;n) \right|\le \exp\left(-|n|\log|\lambda|
		+C_0 \ell\right), |n|>4 \ell.
    \end{equation*}
\end{propAlphaM}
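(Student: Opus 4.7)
The plan is to establish exponential decay of $\psi_j^{[-N,N]}$ via the Poisson formula, using Proposition I$'$ to guarantee that enough windows of size $\ell$ inside $[-N,N]$ admit exponentially decaying Green's functions.

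First I would fix the scale $N$ and the working phase. Since $\mes(\cB_{\ell,M,\omega})$ is small (controlled by the Diophantine condition on $\omega$), one may find $x$ in the given neighborhood of $x_0$ with $x\notin \cB_{\ell,M,\omega}$, and all estimates will be proved at this $x$ and then transferred to $x_0$ via continuity. The stability under shifts $|x-x_0|\le \exp(-(\log\ell)^{C_0})$ is afforded by \eqref{eq:Hx-vs-Hx0}, which dominates all other error scales arising in the argument. The integer $N$ is chosen with $0\le N-M\le C_0\ell$ by testing the finitely many candidates: for each candidate we invoke Proposition I$'$ with translation of size comparable to $M$ to check whether the two edge windows $[-N,-N+\ell-1]$ and $[N-\ell+1,N]$ yield $\log|f|\ge (2\ell+1)L(\omega,E_j^{[-N,N]}(x,\omega))-\ell^{1/2+}$. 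Proposition I$'$ guarantees that at least one of the shifts in its range works, and this dictates the choice of $N$.

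Second, I would upgrade these log-determinant lower bounds to exponential decay of the finite-volume Green's function on many windows. For each point $m$ with $|m|\le N-O(\ell)$ I choose a window $\Lambda_m = s_m+[-\ell,\ell]\subset[-N,N]$ containing $m$ whose shift $s_m$ satisfies the large log-determinant bound. The existence of an appropriate $s_m$ follows from Proposition I$'$ applied with a translation $t_m$ placing the window near $m$; the hypothesis $x\notin\cB_{\ell,M,\omega}$ makes $x$ an admissible phase for every $|t_m|\le\exp(\ell^{c_0})$, which is why the restriction $M\le\exp(\ell^{c_0})$ appears. The hypothesis on $\dist(E_j^{[-N,N]}(x,\omega),\spec H_{s+[-\ell,\ell]}(x,\omega))$ combined with Lemma~\ref{lem:spec-to-ldt} propagates the large-log-determinant property to the energy $E_j^{[-N,N]}(x,\omega)$, and Lemma~\ref{lem:ldt-to-decay} then yields
\[
|(H_{\Lambda_m}(x,\omega)-E_j^{[-N,N]}(x,\omega))^{-1}(j,k)|\le \exp(-\gamma|j-k|+O(\ell^{1/2+})),
\]
with $\gamma\simeq\log|\lambda|$, on each such window.

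Third, I would apply the Poisson formula iteratively to pass from edge decay to full decay. From step 1, the Poisson identity \cref{eq:Poisson} applied to $\psi_j^{[-N,N]}$ on the edge windows forces $|\psi_j^{[-N,N]}(x,\omega;n)|$ to be exponentially small once $|n|$ is within $\ell$ of $\pm N$. Then for each interior $m$ I invoke $\Lambda_m$ from step 2 and apply \cref{eq:Poisson} to express $\psi(m)$ via the values at $a_m-1$ and $b_m+1$, both $\ell$ closer to the boundary. Iterating $O((N-|m|)/\ell)$ times telescopes to the desired bound $|\psi_j^{[-N,N]}(x_0,\omega;n)|\le \exp(-|n|\log|\lambda|+C_0\ell)$ for $|n|>4\ell$.

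The main obstacle is step 2: the bookkeeping to guarantee that for every required interior position $m$ we can realize the window $\Lambda_m$ as arising from a valid application of Proposition I$'$, while keeping the translations within the allowed range $4\ell\le |t|\le\exp(\ell^{c_0})$ and the phase $x$ uniformly outside $\cB_{\ell,M,\omega}$. The design of $\cB_{\ell,M,\omega}$ with the margin $|n|\le M+C_0\ell$ is precisely tailored to this, but verifying that the same $x$ works simultaneously for all the translations needed to cover $[-N,N]$ is the delicate combinatorial point. A secondary technical issue is absorbing the additive loss $O(\ell^{1/2+})$ from each Green's function decay estimate into the target error $C_0\ell$, which requires choosing $\ell_0$ large enough that $\ell^{1/2+}\ll\ell$.
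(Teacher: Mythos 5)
Your overall architecture matches the paper's (edge windows at scale $\ell$, interior windows from the elimination proposition, Poisson formula to get decay), but there is a genuine gap in the selection of $N$, and the description of how the near--origin resonance hypothesis is used has the logical direction reversed.

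The choice of $N$ cannot be made via the elimination proposition. That proposition, applied at a single translation $t$, asserts that at least one of the two clusters of $\ell$-windows (near $0$, or near $t$) carries a large log-determinant; since the near--origin cluster is resonant by hypothesis, it gives one good shift $s_1$ in the range $[t-\ell/2-1,\,t+\ell/2+1]$. Applying it at $t\approx M$ and again at $t\approx -M$ produces two good shifts $s_1\approx M$ and $s_2\approx -M$, but nothing forces $s_2=-s_1$, and a single integer $N$ must make \emph{both} edge windows $[-N,-N+2\ell]$ and $[N-2\ell,N]$ nonresonant. The paper instead invokes the Wegner estimate (Proposition D) at fixed energy $E_0$: the phase set where $\dist(E_0,\spec H_{[-\ell,\ell]}(\cdot,\omega))$ is small lives in $k_0\lesssim\ell$ tiny intervals $I_k$; the Diophantine condition lets each $I_k$ capture at most one point of each of the two orbit strings $x_0+(\mp M\mp n\pm\ell)\omega$ with $|n|\lesssim\ell$; counting then yields a \emph{single} $n_0$ with both edge phases outside $\bigcup I_k$, and one sets $N=M+n_0$. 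It is exactly this simultaneity that your elimination-based scheme does not deliver.

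Two further points. First, you write that the hypothesis $\max_{|s|\le[\ell/2]}\dist(E_j^{[-N,N]},\spec H_{s+[-\ell,\ell]})\lesssim\exp(-\ell^{1/2+})$ ``combined with Lemma~\ref{lem:spec-to-ldt} propagates the large-log-determinant property.'' That is backwards: the distance being \emph{small} gives, via the contrapositive of Lemma~\ref{lem:ldt-to-decay}, that $\log|f_{s+[-\ell,\ell]}|$ is \emph{small} for $|s|\le[\ell/2]$; it is the elimination proposition that then forces the log-determinant to be \emph{large} at the distant windows. Lemma~\ref{lem:spec-to-ldt} (distance large $\Rightarrow$ log-det large) is used only at the very end, after Lemma~\ref{lem:Poisson} has established $\dist(E,\spec H_{[-N,b_{-4\ell}]})$ and $\dist(E,\spec H_{[a_{4\ell},N]})$ are bounded below, to get the log-determinants on those \emph{long} intervals. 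Second, your iterative Poisson chain says ``for each interior $m$ with $|m|\le N-O(\ell)$''; that range includes $|m|\le 4\ell$, where by hypothesis the windows are resonant and no good $\Lambda_m$ exists, so the iteration must stop at $|m|\approx 4\ell$. The paper sidesteps the iteration entirely by applying the Poisson formula once on $[a_{4\ell},N]$ (and on $[-N,b_{-4\ell}]$) after establishing the log-determinant bound there; this also avoids the bookkeeping of accumulating $O(\ell^{1/2+})$ losses over $\sim N/\ell$ steps.
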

\addtocounter{propAlpha}{-1}

\begin{proof}
	From \cref{prop:Wegner} we know that there
	exists $ A(V,c,a,\gamma) $ such that
	\begin{equation*}
		\left\{ x\in \T: \dist(E_0,\spec H_{[-\ell,\ell]}(x,\omega)) <\exp(-(\log \ell)^A)\right\}
		\subset \bigcup_{k=1}^{k_0}I_k,
	\end{equation*}
	where $ I_k $ are intervals such that $ |I_k|\le \exp(-(\log \ell)^{A/2}) $, and $ k_0\le C\ell $. Due to the
	Diophantine condition, each interval $ I_k $ contains at most one point of the form $$ x_0+(-M-n+\ell)\omega \text{\
	or \ } x+(M+n-\ell)\omega, $$ respectively, with $ |n|\le \ell^C $. It follows that there exists $ |n_0|\le C\ell $ such that
	\begin{equation}\label{eq:n0}
		x_0+(-M-n_0+\ell)\omega,x_0+(M+n_0-\ell)\omega \notin \bigcup_{k=1}^{k_0}I_k.
	\end{equation}
	We let $ N=M+n_0$.
	Suppose $$ x\in (x_0-\exp(-(\log \ell)^{2A}),x_0+\exp(-(\log \ell)^{2A}))$$ and $ E=E_j^{[-N,N]}(x,\omega) $ are
	such that $ |E-E_0|\le \exp(-(\log \ell)^{2A}) $. From \cref{eq:n0} and \cref{eq:Hx-vs-Hx0} it follows that
	\begin{equation*}
		\dist(E,\spec H_{[-N,-N+2\ell]}(x,\omega)), \dist(E,\spec H_{[N-2\ell,N]}(x,\omega))
		\gtrsim \exp(-(\log \ell)^A).
	\end{equation*}	
	We will use \cref{lem:Poisson} to show that any $$ E'\in(E-\exp(-\ell^{1/2+}),E+\exp(-\ell^{1/2+}))$$
	is not in the spectrum of $ H $ on certain large intervals. Note that we have
	\begin{equation*}
		\dist(E',\spec H_{[-N,-N+2\ell]}(x,\omega)), \dist(E',\spec H_{[N-2\ell,N]}(x,\omega))
		\gtrsim \exp(-(\log \ell)^A).
	\end{equation*}
	\cref{lem:spec-to-ldt} and \cref{lem:ldt-to-decay} imply that
	\begin{equation*}
		\left| \left(H_{[-N,-N+2\ell]}(x,\omega)-E'\right)^{-1}(m,-N+2\ell) \right|<1,\ m\in[-N,-N+2\ell-\ell/4],
	\end{equation*}
	\begin{equation*}
		\left| \left( H_{[N-2\ell,N]}(x,\omega)-E'\right)^{-1}(m,N-2\ell) \right|<1,\ m\in [N-2\ell+\ell/4,N].
	\end{equation*}
	Since we also have
	\begin{equation*}
		\max_{|s|\le[\ell/2]}\dist(E',\spec H_{s+[-\ell,\ell]}(x,\omega))\lesssim \exp(-\ell^{1/2+}),
	\end{equation*}
	\cref{lem:ldt-to-decay} implies that
	\begin{equation*}
		\max_{|s|\le[\ell/2]}\log|f_{s+[-\ell,\ell]}(x,\omega,E')|\le (2\ell+1)L(E',\omega)-\ell^{1/2+}.
	\end{equation*}
	\cref{prop:elimination-AMO} and \cref{lem:ldt-to-decay} imply that for any
	$$ m\in[-N+2\ell-\ell/4,-4\ell]\cup[4\ell,N-2\ell+\ell/4] $$ there exists $ \Lambda_m=[a_m,b_m]\subset [-N,N] $ containing $m$ such
	that $ m-a_m,b_m-m>\ell/4 $ and
	\begin{equation*}
		\left| \left(H_{\Lambda_m}(x,\omega)-E'\right)^{-1}(m',a_m) \right|\ll 1, m'-a_m\ge \ell/4,
	\end{equation*}
	\begin{equation*}
		\left| \left(H_{\Lambda_m}(x,\omega)-E'\right)^{-1}(m',b_m) \right|\ll 1, b_m-m'\ge \ell/4.
	\end{equation*}
	\cref{lem:Poisson} now implies that $ E' $ is not in the spectrum of $ H $ restricted to $ [-N,b_{-4\ell}] $
	and $ [a_{4\ell},N] $. Since this is true for any $ E'\in(E-\exp(-\ell^{1/2+}),E+\exp(-\ell^{1/2+})) $, it
	follows that
	\begin{equation*}
		\dist(E,\spec H_{[-N,b_{-4\ell}]}(x,\omega)),\dist(E,\spec H_{[a_{4\ell},N]}(x,\omega))
		\gtrsim \exp(-\ell^{1/2+}).
	\end{equation*}
	\cref{lem:spec-to-ldt} implies that
	\begin{multline}\label{eq:ldt-localization}
		\log|f_{[-N,b_{-4\ell}]}(x,\omega,E)|,\log|f_{[a_{4\ell},N]}(x,\omega,E)|\\
		\ge NL(\omega,E)-C\ell-\ell^{1/2+}(\log N)^C\ge NL(\omega,E)-C'\ell,
	\end{multline}
	provided that $ N\le \exp(\ell^{c_0}) $ with $ c_0 $ small enough.
	The conclusion follows by using \cref{lem:ldt-to-decay} and the Poisson
	formula.
\end{proof}

We will now state the analogous result for general potentials.

\begin{propAlpha}\label{prop:localization}
    Consider the Schr\"odinger operator \cref{eq:1.Sch} with real analytic $ V $. Assume that
    $ L(\omega,E)\ge \gamma>0 $ for any $ \omega $ and any $ E\in(E',E'') $.
    There exist $\ell_0(V,c,a,\gamma)$, $ C_0(V,c,a,\gamma) $, $ C_1(V,c,a,\gamma) $
    such that the following statement holds for any $\ell\ge \ell_0$,
    $ N\ge \exp((\log \ell)^{2C_0})$, $ \omega\in \T_{c,a}\setminus \Omega_{\ell,N} $, and
    $ x\in \T\setminus \cB_{\ell,N,\omega} $, where, using the notation of \cref{prop:elimination}, we have
    \begin{equation*}
    	\Omega_{\ell,N}=\bigcup_{\exp((\log \ell)^{C_0})\le|t|\le N} \Omega_{2\ell+1,2\ell+1,t,\ell^{1/2}},\quad
		\cB_{\ell,N,\omega}=\bigcup_{\exp((\log \ell)^{C_0})\le|t|\le N,|s|\le \ell} (s\omega+\cB_{2\ell+1,2\ell+1,t,\ell^{1/2},\omega}).
    \end{equation*}
   	If $ E_j^{[-N,N]}(x,\omega)\in (E',E'') $ is such that
    \begin{equation}\label{eq:2nonresMNEW1}
    	\max_{|s|\le[\ell/2]} \dist(E_j^{[-N,N]}(x,\omega),H_{s+[-\ell,\ell]}(x,\omega))
			\lesssim \exp(-\ell^{1/2+}),
    \end{equation}
    then
    \begin{equation*}
    	\left| \psi_j^{[-N,N]}(x,\omega;n) \right|\le \exp\left(-|n|\gamma
		+C_1\exp((\log\ell)^{C_0})\right), |n|>\exp((\log \ell)^{C_0}).
    \end{equation*}
\end{propAlpha}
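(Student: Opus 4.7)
The plan is to mimic the proof of \cref{prop:localization-AMO}, replacing every appeal to \cref{prop:elimination-AMO} with the general elimination result \cref{prop:elimination}. The bookkeeping is heavier because the latter only works outside an exceptional set of $\omega$'s and $x$'s, but this is precisely what the definitions of $\Omega_{\ell,N}$ and $\cB_{\ell,N,\omega}$ are designed to absorb.

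First I would unwind the exclusion sets. For $\omega \in \T_{c,a} \setminus \Omega_{\ell,N}$ and $x \in \T \setminus \cB_{\ell,N,\omega}$, the union structure ensures that for every shift $|s| \leq \ell$ and every translate $t$ with $\exp((\log\ell)^{C_0}) \leq |t| \leq N$, the point $x + s\omega$ lies outside $\cB_{2\ell+1,2\ell+1,t,\ell^{1/2},\omega}$. Applying \cref{prop:elimination} (with $\ell_1=\ell_2=2\ell+1$, $H=\ell^{1/2}$) and translating to integer-interval notation via \cref{eq:1.Dirdet'} yields
\[
\dist\!\bigl(\spec H_{[s-\ell,s+\ell]}(x,\omega) \cap (E',E''),\; \spec H_{[s+t-\ell,s+t+\ell]}(x,\omega)\bigr) \geq \exp\!\bigl(-\ell^{1/2}(\log\ell)^{3C_2}\bigr).
\]

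Next, set $E = E_j^{[-N,N]}(x,\omega)$. The hypothesis \cref{eq:2nonresMNEW1} places $E$ within $\exp(-\ell^{1/2+})$ of $\spec H_{[s-\ell,s+\ell]}(x,\omega)$ for every $|s| \leq \ell/2$, and \cref{prop:Lyapunov-continuity} extends $L(\omega,E') \geq \gamma/2$ to all $E'$ in a small neighbourhood of $E$. Combining this with the separation estimate above, I would conclude that for every $m$ with $\exp((\log\ell)^{C_0}) \leq |m| \leq N-\ell$ and every $E'$ with $|E'-E| < \exp(-\ell^{1/2+})$,
\[
\dist\!\bigl(E',\; \spec H_{[m-\ell,m+\ell]}(x,\omega)\bigr) \gtrsim \exp\!\bigl(-\ell^{1/2}(\log\ell)^{3C_2}\bigr).
\]
Feeding this into \cref{lem:spec-to-ldt} gives the large-deviation lower bound on $\log|f_{[m-\ell,m+\ell]}(x,\omega,E')|$, and then \cref{lem:ldt-to-decay} yields off-diagonal decay of the Green function $(H_{[m-\ell,m+\ell]}(x,\omega)-E')^{-1}$ at rate $\gamma$, with an additive loss of order $\ell^{1/2+}(\log\ell)^{C}$.

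With these window-wise Green function bounds, I would apply the Poisson criterion \cref{lem:Poisson} just as in the proof of \cref{prop:localization-AMO}: for each $m \in [-N,-\exp((\log\ell)^{C_0})]$ (resp.\ $[\exp((\log\ell)^{C_0}),N]$), choose the sub-interval $\Lambda_m \subset [-N,-\exp((\log\ell)^{C_0})]$ (resp.\ the right-hand analogue) centred near $m$ so that $m$ is at distance $\geq \ell/4$ from each endpoint of $\Lambda_m$. The Green function decay from Step 2 makes the Poisson sum $< 1$, so no $E'$ within $\exp(-\ell^{1/2+})$ of $E$ lies in $\spec H_{[-N,-\exp((\log\ell)^{C_0})]}(x,\omega)$, and similarly on the right. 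A second use of \cref{lem:spec-to-ldt} and \cref{lem:ldt-to-decay} on these long intervals upgrades this to Green function decay at the full rate $\gamma$, with additive loss now of size $\exp((\log\ell)^{C_0})$. Finally, the Poisson formula for $\psi_j^{[-N,N]}$ itself — with Dirichlet zero boundary values at $\pm N$ — delivers the claimed bound $|\psi_j^{[-N,N]}(x,\omega;n)| \leq \exp(-|n|\gamma + C_1 \exp((\log\ell)^{C_0}))$ for $|n| > \exp((\log\ell)^{C_0})$.

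The main obstacle is the bookkeeping: one must check that the shifts $s$ and translates $t$ needed in Step 1 are all covered by the union defining $\cB_{\ell,N,\omega}$, and that the cumulative additive losses from the two successive applications of \cref{lem:spec-to-ldt}/\cref{lem:ldt-to-decay} are absorbed into $C_1 \exp((\log\ell)^{C_0})$ rather than growing with $|n|$. The cleanest way I see to guarantee the latter is to perform only one inductive step on intervals (of length $\leq N$), so that the loss is a single term of order $\exp((\log\ell)^{C_0})$ and not a running sum over Poisson iterations. For $|n| \leq \exp((\log\ell)^{C_0})$ the conclusion holds trivially from $\|\psi_j^{[-N,N]}\|_\infty \leq 1$, so the argument need only be carried out on the two long side intervals.
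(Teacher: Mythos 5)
Your proposal is correct and follows the same route as the paper's proof, which is extremely terse (it simply says ``by the same reasoning as in the proof of Proposition E\,$'$''). You correctly import the structure from the almost Mathieu proof and substitute \cref{prop:elimination} for \cref{prop:elimination-AMO}, and your observation that the argument proceeds in exactly two passes — small $\ell$-windows giving non-resonance on the long side intervals $[-N,-a]$ and $[a,N]$ with $a=[\exp((\log\ell)^{C_0})]+1$, then one more application of \cref{lem:spec-to-ldt}/\cref{lem:ldt-to-decay} plus Poisson for the eigenvector — matches the paper. One small simplification you correctly exploit is that for general $V$ the Wegner-based selection of a shift $n_0$ appearing in the proof of \cref{prop:localization-AMO} is unnecessary: Proposition~D delivers non-resonance for \emph{all} translates $t$ in the range, including those of size $\approx N$, so the edge windows $[-N,-N+2\ell]$ and $[N-2\ell,N]$ are automatically handled and no phase-and-energy-dependent adjustment of the interval $[-N,N]$ is needed (this is why \cref{prop:localization} has no analogue of the $N(x_0,E_0,\ell)$ from \cref{prop:localization-AMO}). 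Your final paragraph identifying the additive-loss bookkeeping as the main thing to check is accurate: the loss from the two applications of \cref{lem:spec-to-ldt} is of order $\ell^{1/2}(\log\ell)^{C}(\log N)^{C}$, which for the admissible range of $N$ is dominated by $\exp((\log\ell)^{C_0})$ and, together with the offset $\gamma a$, is absorbed into $C_1\exp((\log\ell)^{C_0})$.
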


\begin{proof}
	The main idea is to use \cref{lem:Poisson} to show that any
	\begin{equation*}
		E\in\left(E_j^{[-N,N]}(x,\omega)-\exp(-\ell^{1/2+}),
			E_j^{[-N,N]}(x,\omega)+\exp(-\ell^{1/2+})\right)
	\end{equation*}
	is not in the spectrum of $ H $ on certain large intervals. Since we have
	\begin{equation*}
		\max_{|s|\le[\ell/2]}\dist(E,\spec H_{s+[-\ell,\ell]}(x,\omega))\lesssim \exp(-\ell^{1/2+}),
	\end{equation*}
	\cref{prop:elimination} implies that
	\begin{equation*}
		\dist(E,\spec H_{t+[-\ell,\ell]}(x,\omega))\gtrsim \exp(-\ell^{1/2} (\log \ell)^{C_0}),\quad
			\exp((\log \ell)^C)<|t|\le N.
	\end{equation*}
	By the same reasoning as in the proof of \cref{prop:localization-AMO}, we obtain
	\begin{multline*}
		\dist \left(E_j^{[-N,N]}(x,\omega), \spec H_{[-N,-a]}(x,\omega)\right),\\
		\dist \left(E_j^{[-N,N]}(x,\omega), \spec H_{[a,N]}(x,\omega)\right)
		\gtrsim  \exp(-\ell^{1/2} (\log \ell)^{C_0}),
	\end{multline*}
	with $ a=[\exp((\log \ell)^{C_0})]+1 $.
	The conclusion follows by using \cref{lem:spec-to-ldt},  \cref{lem:ldt-to-decay}, and the Poisson
	formula.	
\end{proof}

Note that, due to \cref{prop:elimination}, in the above proposition we have
\begin{equation}\label{eq:complexity-estimates}
	\begin{aligned}
	& \mes(\Omega_{\ell,N})\le N\exp(-\ell^{1/4}/2), & &\compl(\Omega_{\ell,N})\le N^2\exp((\log \ell)^C),\\
	& \mes(\cB_{\ell,N,\omega})\le N^2\exp(-\ell^{1/4}/2), & &
	\compl(\cB_{\ell,N,\omega})\le N^2 \exp(\exp(\log \ell)^C),
	\end{aligned}
\end{equation}
provided $ \ell $ is large enough. This shows that the above result is meaningful as long as $ N\le \exp(l^\epsilon) $.

\begin{remark}\label{rem:2criterion}\upshape
    Condition \eqref{eq:2nonresMNEW1} in \cref{prop:localization,prop:localization-AMO} implies that
    the essential support of the eigenfunction remains close to the origin as
    $N$ grows.
    This condition serves as a criterion for a given value $E_0$ to fall into the
    spectrum in the regime of positive
    Lyapunov exponents.
    This is the meaning of~\cref{lem:spectrum-criterion}. Let us note that the elimination of
    resonances in \cref{prop:elimination,prop:elimination-AMO}
    combined with the Poisson formula ensures only that the essential support of the eigenfunction
    cannot be too spread out.
    However, this obviously does not specify where the essential support is located.
\end{remark}

\section{Separation of Finite Scale Eigenvalues}\label{sec:separation}

Next we discuss the separation of finite scale eigenvalues. The basic idea is that if two distinct eigenvalues are too close, then we can show that their corresponding eigenfunctions are also close, contradicting their orthogonality. It follows from \cref{eq:transfer-and-determinants} that the eigenvector $ \psi  $ for the Dirichlet problem on $ [a,b] $, normalized by $ \psi(a)=1 $, is given by
\begin{equation}\label{eq:eigenvector-determinant}
	\psi(n)=f_{[a,n-1]}(x,\omega,E),\qquad  n\in[a,b],
\end{equation}
with the convention that $ f_{[a,a-1]}=1 $. Thus, we can estimate the distance between the eigenvectors corresponding to different energies by using the following consequence of the uniform upper bound estimate.

\begin{corollary}[{\cite[Cor. 2.14]{GolSch11}}]\label{cor:uniform-Lipschitz}
 	Fix $ \omega_0\in \T_{c,a} $ and $ E_0\in \C $. Assume that
	$ L(\omega_0,E_0)\ge \gamma>0 $. Let $ \partial $ denote one of the partial derivatives
	$ \partial_x $, $ \partial_E $, $ \partial_\omega $. Then
	\begin{equation*}
		\sup \left\{ \log\norm{\partial M_N(x,\omega,E)}: |E-E_0|+|\omega-\omega_0|<N^{-C}, x\in\T \right\}
		\le NL(\omega_0,E_0)+C(\log N)^{C_0},
	\end{equation*}
	for all $ N\ge 2 $. Here $ C_0=C_0(a) $ and $ C=C(V,a,c,\gamma,E_0) $.
\end{corollary}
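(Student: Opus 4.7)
The plan is to reduce the derivative estimate to the scalar upper bound in \cref{prop:uniform-upper-bound} via Cauchy's integral formula, applied on a small polydisk in $(x,\omega,E)$. By \cref{eq:transfer-and-determinants}, each entry of $M_N(x,\omega,E)$ is a Dirichlet determinant $f_{[a,b]}(x,\omega,E)$ with $b-a+1\le N$, so $\norm{\partial M_N}\lesssim \max_{a,b}|\partial f_{[a,b]}|$ and it suffices to bound these four derivatives. Since $V$ is real-analytic in a strip of some fixed width $\rho_0$, each such determinant extends jointly analytically in $(x,\omega,E)$: entire in $E$, analytic in $x$ in a strip of width $\rho_0$, and analytic in $\omega$ in the shrinking strip of width $\rho_0/N$. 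For $r:=N^{-C}$ with $C$ large, a polydisk of radius $r$ in each variable sits inside the domain of analyticity for all $N$ large enough.

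The next step is to establish a uniform upper bound for $|f_{[a,b]}|$ on this polydisk. In the real-parameter regime $(\omega',E')\in\T_{c,a}\times\R$ with $|\omega'-\omega_0|+|E'-E_0|<2r$, \cref{prop:Lyapunov-continuity} (together with the analogous continuity in $\omega$ available under the strong Diophantine condition) yields $L(\omega',E')\ge\gamma/2$ and
\[
	N\bigl|L(\omega',E')-L(\omega_0,E_0)\bigr|\le CN(2r)^{\alpha_0}\le 1,
\]
provided $C\alpha_0>1$. Hence \cref{prop:uniform-upper-bound} gives
\[
	\sup_{x\in\T}\log|f_{[a,b]}(x,\omega',E')|\le NL(\omega_0,E_0)+C'(\log N)^{C_0}
\]
for every subinterval. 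This real-parameter bound is promoted to the complex polydisk using the subharmonicity of $\log|f_{[a,b]}|$ in each of $x$ and $\omega$, together with the maximum modulus principle in $E$: the mean of $\log|f_{[a,b]}|$ in $x$ converges to $(b-a+1)L$ by the ergodic characterization of the Lyapunov exponent, and Cartan/BMO-type estimates (the same ones feeding into the proof of \cref{prop:uniform-upper-bound}) control the pointwise supremum by the mean up to an additive $(\log N)^{C_0}$.

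Finally, Cauchy's inequality applied in each variable separately gives
\[
	|\partial f_{[a,b]}(x,\omega,E)|\le \frac{2}{r}\sup|f_{[a,b]}|\le \frac{2}{r}\exp\bigl(NL(\omega_0,E_0)+C'(\log N)^{C_0}\bigr),
\]
the supremum running over a circle of radius $r/2$ around the evaluation point in the variable being differentiated. Taking logarithms contributes $\log(2/r)\lesssim C\log N$, which is absorbed into $C''(\log N)^{C_0}$ since $C_0\ge 1$. The main obstacle will be the promotion of \cref{prop:uniform-upper-bound} from the real Diophantine setting to a full complex polydisk, since nearby real $\omega$ need not lie in $\T_{c,a}$ and complex $\omega$ fall outside the scope of the proposition altogether. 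The cleanest way to handle this is to work directly at the level of the plurisubharmonic function $\tfrac{1}{N}\log|f_N|$ and its Riesz mass decomposition on the relevant polydisk, rather than to patch together pointwise real estimates; this is precisely the route taken in \cite{GolSch08,GolSch11}.
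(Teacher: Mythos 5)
The paper does not give its own proof of this statement; it simply cites it as \cite[Cor.\ 2.14]{GolSch11}. Your overall plan --- reduce to the entries $f_{[a,b]}$ via \cref{eq:transfer-and-determinants}, establish a sharp upper bound for $\log|f_{[a,b]}|$ on a complex polydisk of radius $N^{-C}$ centered at $(\omega_0,E_0)$, and then apply Cauchy's inequality in each variable, absorbing the resulting $\log(1/r)\sim C\log N$ into the $(\log N)^{C_0}$ term --- is indeed the route taken in \cite{GolSch08,GolSch11}. Two remarks on the execution. First, your middle paragraph's attempt to derive the complex upper bound by patching together real-parameter evaluations at nearby $(\omega',E')\in\T_{c,a}\times\R$ via \cref{prop:Lyapunov-continuity} is a dead end for exactly the reason you state at the close: nearby reals need not be Diophantine and complex arguments fall entirely outside the scope of \cref{prop:uniform-upper-bound}; moreover, subharmonicity alone moves in the wrong direction (it controls point values by means, not suprema) and the maximum modulus principle likewise produces real-boundary bounds from complex interiors rather than the other way around. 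Second, the actual mechanism in \cite{GolSch08} (Proposition~4.3 and its surroundings, from which \cref{prop:uniform-upper-bound} is excerpted) already furnishes the uniform upper bound on a complex strip in $x$ and on a polydisk in $(\omega,E)$ of radius $\sim N^{-C}$, precisely via the avalanche principle to pass to small scales, a large deviation estimate at those scales, and Cartan/BMO bounds relating the supremum of the subharmonic function $\tfrac1N\log|f_N|$ to its mean; this is the ``Riesz mass'' argument you gesture at in the final sentence. So the proposal is correct in structure and matches the cited proof, but the hardest step --- the genuinely complex upper bound --- is identified and then deferred rather than carried out; as presented, the middle paragraph should be deleted and replaced by a direct citation of the complex version of \cref{prop:uniform-upper-bound} from \cite{GolSch08}.
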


We are ready to prove separation of eigenvalues for the almost Mathieu operator.
\addtocounter{propAlpha}{1}

\begin{propAlphaM}\label{prop:separation-AMO}
	Using the notation and the assumptions of \cref{prop:localization-AMO}, we have that
	\begin{equation*}
		|E_k^{[-N,N]}(x,\omega)-E_j^{[-N,N]}(x,\omega)|>\exp(-C_2 \ell), \quad k\neq j
	\end{equation*}
	with $ C_2=C_2(|\lambda|,c,a)\gg C_1 $.
\end{propAlphaM}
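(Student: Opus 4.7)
The strategy is to derive a contradiction with orthogonality of eigenvectors, via a classical determinantal identity for $\partial_E f_{[-N,N]}$ at an eigenvalue combined with the localization bound of \cref{prop:localization-AMO}. I would assume, for contradiction, that $\delta := |E_j^{[-N,N]}(x,\omega) - E_k^{[-N,N]}(x,\omega)| < \exp(-C_2\ell)$ for some $j\neq k$ and some $C_2 \gg C_1$, and introduce the formal eigenvectors $\phi_E(n) = f_{[-N,n-1]}(x,\omega,E)$ and $\chi_E(n) = f_{[n+1,N]}(x,\omega,E)$ from \eqref{eq:eigenvector-determinant}. At $E=E_j$, both are genuine eigenvectors proportional to the $L^2$-normalized eigenvector $\psi_j$: $\phi_{E_j}=\psi_j/\psi_j(-N)$ and $\chi_{E_j}=\psi_j/\psi_j(N)$. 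Applying the discrete Green identity to $u=\partial_E\phi_E|_{E=E_j}$ (which solves $(H-E_j)u=\phi_{E_j}$) and $v=\chi_{E_j}$, the boundary terms collapse (since $\phi_E(-N-1)=0,\phi_E(-N)=1$ are $E$-independent and $\chi_{E_j}(N+1)=0$), yielding
\[
\partial_E f_{[-N,N]}(x,\omega,E_j) = \sum_{n=-N}^{N}\phi_{E_j}(n)\chi_{E_j}(n) = \frac{1}{\psi_j(-N)\psi_j(N)}.
\]

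Next, \cref{prop:localization-AMO} applied to the eigenvector $\psi_j$ gives $|\psi_j(\pm N)|\le\exp(-N\log|\lambda|+C_1\ell)$ (since $N\ge 4\ell$), so
\[
|\partial_E f_{[-N,N]}(x,\omega,E_j)|\ge\exp(2N\log|\lambda|-2C_1\ell).
\]
On the other hand, since $f_N:=f_{[-N,N]}(x,\omega,\cdot)$ vanishes at both $E_j$ and $E_k$, Taylor expansion about $E_j$ yields
\[
|\partial_E f_N(E_j)| \le \tfrac{\delta}{2}\sup_{E\in[E_j,E_k]}|\partial_E^2 f_N(E)|.
\]
The second derivative is bounded via the representation $\partial_E^2 f_N=\sum_n(\partial_E\phi_E\cdot\chi_E+\phi_E\cdot\partial_E\chi_E)$, using the uniform upper bound of \cref{prop:uniform-upper-bound} for $\phi_E,\chi_E$ together with the transfer-matrix derivative estimate of \cref{cor:uniform-Lipschitz} for $\partial_E\phi_E,\partial_E\chi_E$, yielding
\[
|\partial_E^2 f_N(E)|\le\exp\bigl(2NL(\omega,E_j)+C(\log N)^{C_0}\bigr)
\]
for $E$ in a small neighborhood of $E_j$.

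Combining the two bounds,
\[
\delta\ \ge\ 2\exp\bigl(-2N(L(\omega,E_j)-\log|\lambda|)-2C_1\ell-C(\log N)^{C_0}\bigr).
\]
For the almost Mathieu operator with $|\lambda|>1$, the Aubry--Andre identity (Bourgain--Jitomirskaya) gives $L(\omega,E)=\log|\lambda|$ on $\cS_\omega$; since the hypothesis of \cref{prop:localization-AMO} forces $E_j$ to lie very close to the spectrum, H\"older continuity of $L$ (\cref{prop:Lyapunov-continuity}) renders the first term in the exponent negligible. The restriction $M\le\exp(\ell^{c_0})$ with $c_0$ small (so $N\le 2M$) forces $(\log N)^{C_0}\ll\ell$. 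Hence $\delta\ge\exp(-C_2\ell)$ for any $C_2$ chosen to exceed $2C_1$ by enough to absorb the lower-order terms, contradicting the assumption.

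The main obstacle is the bound $|\partial_E^2 f_N|\le\exp(2N\log|\lambda|+o(N))$: a priori it depends on $L(\omega,E_j)$, which in general can exceed $\log|\lambda|$. Making it work requires $L(\omega,E_j)=\log|\lambda|$ up to additive error $\ll\ell/N$, which crucially uses the specific AMO identity $L=\log|\lambda|$ on the spectrum together with the fact that the hypotheses of \cref{prop:localization-AMO} place $E_j$ very close to $\cS_\omega$.
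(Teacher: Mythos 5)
Your proposal is correct but follows a genuinely different route than the paper's. The paper argues directly from orthogonality of eigenvectors: writing $\psi_i(n)=f_{[-N,n-1]}(x,\omega,E_i)$ as in \eqref{eq:eigenvector-determinant}, it uses \cref{prop:localization-AMO} to show the tails $\sum_{|n|>C\ell}|\psi_i(n)|^2$ carry only a fraction $\exp(-C\ell\log|\lambda|)$ of the mass, uses \cref{cor:uniform-Lipschitz} to bound $\sum_{|n|\le C\ell}|\psi_1(n)-\psi_2(n)|^2\le |E_1-E_2|^2\exp(2NL+C\ell)$, and uses \eqref{eq:ldt-localization} to get $\sum|\psi_1(n)|^2\ge\exp(2NL-C\ell)$; the $2NL$ exponents cancel, yielding $\norm{\psi_1-\psi_2}^2\le\exp(-C\ell)\bigl(\norm{\psi_1}^2+\norm{\psi_2}^2\bigr)$ and contradicting $\norm{\psi_1-\psi_2}^2=\norm{\psi_1}^2+\norm{\psi_2}^2$. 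You instead exploit the classical identity $\partial_E f_{[-N,N]}(E_j)=\pm(\psi_j(-N)\psi_j(N))^{-1}$ and Taylor expansion at the nearby zero $E_k$; your derivations (the discrete Green identity, the second-derivative bound via \cref{prop:uniform-upper-bound} and \cref{cor:uniform-Lipschitz}) are sound. The essential difference is that your route needs the exponent $2NL(\omega,E_j)$ in $\sup|\partial^2_E f_N|$ to match $2N\log|\lambda|$ to within $O(\ell)$, which you correctly flag as the crux and resolve via the Bourgain--Jitomirskaya identity $L=\log|\lambda|$ on $\cS_\omega$ together with H\"older continuity (though note $\dist(E_j,\cS_\omega)\lesssim\exp(-cN)$ follows from the \emph{conclusion} of \cref{prop:localization-AMO} fed into \cref{lem:eigenvalue-stabilization}, not from its hypotheses alone). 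The paper's orthogonality argument avoids this AMO-specific ingredient entirely---the $L$-terms cancel internally---which is why it transfers verbatim to \cref{prop:separation} for general analytic $V$, whereas your route as written would not, since in general $L$ can strictly exceed the localization rate $\gamma$ appearing in \cref{prop:localization}. What your route buys in return is a clean interpretation of the separation bound as a lower bound on $\prod_{k\neq j}|E_k-E_j|$ coming from the boundary values of the eigenfunction.
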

\addtocounter{propAlpha}{-1}

\begin{proof}
	We argue by contradiction. Let
	\begin{equation*}
		E_1=E_j^{[-N,N]}(x,\omega),\quad E_2=E_k^{[-N,N]}(x,\omega),
	\end{equation*}
	and assume
	that $ |E_1-E_2|<\exp(-C_2 \ell) $. We have $ |E_2-E_0|\lesssim \exp(-\ell^{1/2}) $, so
	\cref{prop:localization-AMO} applies to $ E_2 $ also. We know from \cref{eq:eigenvector-determinant} that
	$$ \psi_i(n)=f_{[-N,n-1]}(x,\omega,E_i),\qquad i=1,2 $$ are eigenvectors corresponding to  $ E_1 $ and
	$ E_2 $. \cref{prop:localization-AMO} implies that
	\begin{equation*}
		\sum_{|n|>C\ell} |\psi_i(n)|^2\le \exp(-C\ell\log|\lambda|)\sum |\psi_i(n)|^2,
	\end{equation*}
	provided $ C\gg C_1 $. From \cref{cor:uniform-Lipschitz} it follows that
	\begin{equation*}
		\sum_{|n|\le C\ell} |\psi_1(n)-\psi_2(n)|^2\le |E_1-E_2|^2\exp(2NL(E_1,\omega)+C\ell)
		\le \exp(-2C_2 \ell+2NL(E_1,\omega)+C\ell).
	\end{equation*}
	From \cref{eq:ldt-localization} we know that
	\begin{equation*}
		\sum |\psi_1(n)|^2\ge \exp(2NL(E_1,\omega)-C\ell).
	\end{equation*}
	Therefore we have
	\begin{equation*}
		\sum_{|n|\le C\ell} |\psi_1(n)-\psi_2(n)|^2\le \exp(-C_2\ell)\sum |\psi_1(n)|^2,
	\end{equation*}
	provided $ C_2 $ is large enough.
	We arrive at the estimate
	\begin{equation*}
		\sum |\psi_1(n)|^2+\sum |\psi_2(n)|^2=\norm{\psi_1-\psi_2}^2
		\le \exp(-C\ell) \left(\sum |\psi_1(n)|^2+\sum |\psi_2(n)|^2\right).
	\end{equation*}
	This is impossible and concludes the proof.
\end{proof}

We only state the analogous result for general analytic potentials. Its proof is completely analogous to \cref{prop:separation-AMO}.
The difference in the results comes from the difference in the sizes of the localization windows. Note that, for this reason, the separation is much better for the almost Mathieu operator.

\begin{propAlpha}\label{prop:separation}
	Using the notation and the assumptions of \cref{prop:localization}, we have that
	\begin{equation*}
		|E_k^{[-N,N]}(x,\omega)-E_j^{[-N,N]}(x,\omega)|>\exp(-C_2 \exp((\log \ell)^{C_0})), \quad  k\neq j
	\end{equation*}
	with $ C_2=C_2(V,c,a,\gamma)\gg C_1 $.
\end{propAlpha}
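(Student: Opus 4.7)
The argument is a direct transcription of the proof of \cref{prop:separation-AMO}, adjusted for the fact that the localization window in \cref{prop:localization} has size $L := \exp((\log \ell)^{C_0})$ rather than $O(\ell)$. I would argue by contradiction: assume
\[
|E_1 - E_2| < \exp(-C_2 L)
\]
for two distinct eigenvalues $E_1 = E_j^{[-N,N]}(x,\omega)$ and $E_2 = E_k^{[-N,N]}(x,\omega)$, and deduce that the corresponding eigenvectors $\psi_i(n) = f_{[-N,n-1]}(x,\omega,E_i)$ from \cref{eq:eigenvector-determinant} are nearly parallel, contradicting orthogonality.

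First I would check that \cref{prop:localization} applies to both eigenvalues simultaneously. Since $|E_1 - E_2| \ll \exp(-\ell^{1/2+})$, the nonresonance hypothesis \cref{eq:2nonresMNEW1} transfers from $E_1$ to $E_2$, so both $\psi_1$ and $\psi_2$ enjoy the decay
\[
|\psi_i(n)| \le \exp(-|n|\gamma + C_1 L), \qquad |n| > L.
\]
In particular, for any fixed $C \gg C_1/\gamma$,
\[
\sum_{|n| > CL} |\psi_i(n)|^2 \le \exp(-c\gamma L) \sum_n |\psi_i(n)|^2.
\]
Next, on the window $|n| \le CL$ I would invoke \cref{cor:uniform-Lipschitz} applied to $\partial_E M_{[-N,n-1]}$ with reference point $E_0 = E_1$ (permissible because $|E_1 - E_2| \ll N^{-C}$ for any fixed $C$ once $C_2$ is large enough and $L \gg \log N$), obtaining the pointwise bound $|\partial_E f_{[-N,n-1]}| \le \exp(NL(\omega,E_1) + C(\log N)^{C_0})$ and hence
\[
\sum_{|n| \le CL} |\psi_1(n) - \psi_2(n)|^2 \le (2CL+1)\,|E_1 - E_2|^2 \exp\bigl(2NL(\omega, E_1) + C(\log N)^{C_0}\bigr).
\]

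The matching lower bound on $\|\psi_i\|^2$ is the analog of \cref{eq:ldt-localization}, extracted from the proof of \cref{prop:localization}: the distance estimates $\dist(E_i, \spec H_{[-N,-a]}(x,\omega))$, $\dist(E_i, \spec H_{[a,N]}(x,\omega)) \gtrsim \exp(-\ell^{1/2}(\log\ell)^{C_0})$ (with $a = [\exp((\log\ell)^{C_0})]+1$) established there, combined with \cref{lem:spec-to-ldt}, yield
\[
\sum_n |\psi_i(n)|^2 \ge |\psi_i(-a+1)|^2 = |f_{[-N,-a]}(x,\omega,E_i)|^2 \ge \exp\bigl(2NL(\omega,E_i) - CL\bigr),
\]
where I use that the standing assumption $N \le \exp(\ell^{c_0})$ makes the error $\ell^{1/2}(\log\ell)^{C_0}(\log N)^{C_0}$ negligible compared to $L$. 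Combining the three displays,
\[
\frac{\|\psi_1 - \psi_2\|^2}{\|\psi_1\|^2 + \|\psi_2\|^2} \lesssim \exp\bigl(-2C_2 L + CL + C(\log N)^{C_0}\bigr) + \exp(-c\gamma L),
\]
which can be made smaller than $1/4$ by choosing $C_2$ sufficiently large relative to $C, C_1$, since $(\log N)^{C_0} \ll L$. This contradicts $\langle \psi_1, \psi_2\rangle = 0$. The one step requiring mild care—more bookkeeping than genuine obstacle—is controlling the polynomial and $(\log N)^{C_0}$-type corrections against the quantity $L = \exp((\log\ell)^{C_0})$; this is precisely why the separation gap must take the apparently coarse form $\exp(-C_2 L)$ with $C_2 \gg C_1$, rather than anything sharper.
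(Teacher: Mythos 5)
Your proof is correct and takes the same route as the paper, which disposes of Proposition~F with the single remark that its proof is ``completely analogous'' to that of Proposition~E$'$ (\cref{prop:separation-AMO}) once the localization window $O(\ell)$ is replaced by $L=\exp((\log\ell)^{C_0})$; your write-up is a faithful and correctly bookkept transcription of exactly that adaptation, including the points that genuinely require a second look (transfer of the nonresonance hypothesis to $E_2$, the comparison $(\log N)^{C_0}\ll L$ under the standing constraints $\exp((\log\ell)^{2C_0})\le N\lesssim\exp(\ell^{\epsilon})$, and the lower bound on $\|\psi_i\|^2$ via the distance estimates internal to \cref{prop:localization}).
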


\section{Stabilization of Finite Scale Spectral Segments}\label{sec:stabilization}

\cref{prop:separation} and \cref{prop:separation-AMO} allow us to obtain a {\em stability property} of the finite volume spectra as we pass from one scale to the next bigger one. This paves the way for a multi-scale control of the spectrum in infinite volume.

We first recall some well-known estimates on the stabilization of finite scale eigenvalues and eigenfunctions as the scale increases.

\begin{lemma}\label{lem:eigenvalue-stabilization}
	Let $ x,\omega\in \T $. For any intervals $ \Lambda_0=[a_0,b_0]\subset \Lambda\subset \Z $ and any
	$ j_0 $, we have
	\begin{equation*}
		\dist\left(E_{j_0}^{\Lambda_0}(x,\omega),\spec H_\Lambda(x,\omega)\right)
		\le \left| \psi_{j_0}^{\Lambda_0}(x,\omega;a_0) \right|
			+\left| \psi_{j_0}^{\Lambda_0}(x,\omega;b_0) \right|.
	\end{equation*}
\end{lemma}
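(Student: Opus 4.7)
The plan is to construct a trial vector by extending the finite-volume eigenfunction by zero and then bound the residual $\|(H_\Lambda - E)\tilde\psi\|$, which controls the distance from $E$ to the spectrum of $H_\Lambda$ by the standard spectral estimate.

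First I would set $E := E_{j_0}^{\Lambda_0}(x,\omega)$, write $\psi := \psi_{j_0}^{\Lambda_0}(x,\omega)$, and define $\tilde\psi \in \ell^2(\Lambda)$ by $\tilde\psi(n) = \psi(n)$ for $n \in \Lambda_0$ and $\tilde\psi(n) = 0$ for $n \in \Lambda \setminus \Lambda_0$. Since $\psi$ is normalized on $\Lambda_0$, we have $\|\tilde\psi\| = 1$.

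Next, I would inspect $(H_\Lambda - E)\tilde\psi$ pointwise, using that $H_\Lambda$ is tridiagonal with the same potential as $H_{\Lambda_0}$. For $n \in [a_0+1,b_0-1]$ this vanishes by the eigenvalue equation $H_{\Lambda_0}\psi = E\psi$. At $n = a_0$, the only difference between $(H_\Lambda\tilde\psi)(a_0)$ and $(H_{\Lambda_0}\psi)(a_0)$ would be the term $-\tilde\psi(a_0-1)$, but $\tilde\psi(a_0-1) = 0$ either because $a_0-1 \in \Lambda \setminus \Lambda_0$ or because $a_0 = \min\Lambda$ (in which case the Dirichlet convention gives $0$ as well). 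The same holds at $n = b_0$. For $n \in \Lambda \setminus \Lambda_0$, the expression is $-\tilde\psi(n-1) - \tilde\psi(n+1)$, which is nonzero only when $n = a_0 - 1$ (if this site lies in $\Lambda$), giving $-\psi(a_0)$, or when $n = b_0 + 1$ (if this site lies in $\Lambda$), giving $-\psi(b_0)$.

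Collecting these observations,
\begin{equation*}
\|(H_\Lambda - E)\tilde\psi\|^2 \le |\psi(a_0)|^2 + |\psi(b_0)|^2 \le \bigl(|\psi(a_0)| + |\psi(b_0)|\bigr)^2,
\end{equation*}
where it is understood that the term at $a_0$ (resp.\ $b_0$) is simply absent in the bound when $a_0 = \min\Lambda$ (resp.\ $b_0 = \max\Lambda$). Finally, the elementary spectral inequality
\begin{equation*}
\dist(E,\spec H_\Lambda) = \|(H_\Lambda - E)^{-1}\|^{-1} \le \frac{\|(H_\Lambda - E)\tilde\psi\|}{\|\tilde\psi\|}
\end{equation*}
(valid whenever $E \notin \spec H_\Lambda$, and trivial otherwise) yields the claimed bound. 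There is no real obstacle here; the only point requiring care is the bookkeeping at the endpoints, to verify that Dirichlet conditions at $a_0 - 1, b_0 + 1$ in $H_{\Lambda_0}$ match the zero extension in $H_\Lambda$, and that the coincidence $a_0 = \min\Lambda$ or $b_0 = \max\Lambda$ only strengthens the estimate.
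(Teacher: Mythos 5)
Your proof is correct and follows the same route as the paper: extend the Dirichlet eigenfunction by zero to $\Lambda$, observe that the residual $(H_\Lambda - E)\tilde\psi$ is supported only at the sites $a_0-1$ and $b_0+1$ with values $-\psi(a_0)$ and $-\psi(b_0)$, and conclude via the self-adjointness inequality $\dist(E,\spec H_\Lambda)\le\|(H_\Lambda - E)\tilde\psi\|$. The only difference is that you spell out the pointwise endpoint bookkeeping explicitly, which the paper leaves implicit.
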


\begin{proof}
	Let $ \psi_0 $ be the extension, with zero entries, of $ \psi_{j_0}^{\Lambda_0}(x,\omega) $ to
	$ \Lambda $. Since $ \norm{\psi_0}=1 $, the conclusion follows from the fact that we have
	\begin{equation*}
		\norm{(H_\Lambda(x,\omega)-E_{j_0}^{\Lambda_0}(x,\omega))\psi_0}
		\le \left| \psi_{j_0}^{\Lambda_0}(x,\omega;a_0) \right|
			+\left| \psi_{j_0}^{\Lambda_0}(x,\omega;b_0) \right|.
	\end{equation*}
	Indeed, this implies that $\|(H_\Lambda(x,\omega)-E_{j_0}^{\Lambda_0}(x,\omega))^{-1}\|^{-1}$ is also bounded
	by the right-hand side and the lemma follows by self-adjointness of $H_\Lambda(x,\omega)$.
\end{proof}

\begin{lemma}\label{lem:eigenvector-stabilization}
    Let $ A $ be a finite dimensional Hermitian operator.
    Let $E,\eta\in \mathbb{R}$, $\eta>0$. Assume that the subspace of the eigenvectors
    of $A$ with eigenvalues falling into the interval $(E-\eta,E+\eta)$ is at most of dimension one.
    If there exists $ \phi $ such that $ \norm{\phi}=1 $ and
    \begin{equation*}
	    \|(A-E)\phi\|< \epsilon<\eta,
    \end{equation*}
    then there exists an eigenvector $\psi_0$ with an eigenvalue $E_0\in (E-\varepsilon,E+\varepsilon)$, such that
    \begin{equation*}
        \|\phi-\psi_0\|\lesssim \epsilon\eta^{-1}
    \end{equation*}
\end{lemma}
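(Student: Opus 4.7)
The plan is to diagonalize $A$ and analyze the coefficients of $\phi$ in an orthonormal eigenbasis. Let $\{\psi_j\}$ be an orthonormal eigenbasis of $A$ with eigenvalues $E_j$, and write $\phi = \sum_j c_j \psi_j$, so that $\sum_j |c_j|^2 = 1$ and
\begin{equation*}
\|(A-E)\phi\|^2 = \sum_j |E_j - E|^2 |c_j|^2 < \epsilon^2.
\end{equation*}

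First I would identify the relevant eigenvalue $E_0$. Since $(\min_j |E_j-E|)^2 \le \|(A-E)\phi\|^2 < \epsilon^2$, there exists an index $j_0$ with $|E_{j_0} - E| < \epsilon$. Because $\epsilon < \eta$, this eigenvalue lies in $(E-\eta, E+\eta)$, and the hypothesis that the spectral subspace for this interval is at most one-dimensional implies that $j_0$ is the unique such index. In particular, $|E_j - E| \ge \eta$ for every $j \ne j_0$. Set $\psi_0 := \psi_{j_0}$ and $E_0 := E_{j_0}$; after multiplying $\psi_0$ by a unit scalar, we may arrange $c_{j_0} \ge 0$.

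Next I would control the tail $\sum_{j\ne j_0} |c_j|^2$. From the bound $\|(A-E)\phi\|^2 < \epsilon^2$ and the separation $|E_j - E| \ge \eta$ for $j \ne j_0$, we get
\begin{equation*}
\eta^2 \sum_{j \ne j_0} |c_j|^2 \le \sum_{j \ne j_0} |E_j - E|^2 |c_j|^2 < \epsilon^2,
\end{equation*}
so $\delta^2 := \sum_{j \ne j_0} |c_j|^2 < \epsilon^2/\eta^2$, and $c_{j_0} = \sqrt{1-\delta^2}$.

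Finally I would assemble the norm estimate. By orthonormality,
\begin{equation*}
\|\phi - \psi_0\|^2 = (c_{j_0} - 1)^2 + \sum_{j \ne j_0} |c_j|^2 = (1 - \sqrt{1-\delta^2})^2 + \delta^2 \lesssim \delta^2 < \epsilon^2/\eta^2,
\end{equation*}
using $(1-\sqrt{1-\delta^2})^2 \le \delta^4$ for $\delta \le 1$. This gives $\|\phi - \psi_0\| \lesssim \epsilon \eta^{-1}$, as required. There is no real obstacle here; the only subtle point is the phase choice for $\psi_0$ needed to pass from the easy bound on $\|\phi - c_{j_0}\psi_0\|$ to the stated bound on $\|\phi - \psi_0\|$, which is why the one-dimensionality hypothesis (ruling out ambiguity in the choice of $\psi_0$) is essential.
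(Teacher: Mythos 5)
Your proof is correct and follows essentially the same route as the paper: expand $\phi$ in the eigenbasis, isolate the unique index $j_0$ with $|E_{j_0}-E|<\epsilon$ using the one-dimensionality hypothesis, bound the tail $\sum_{j\ne j_0}|c_j|^2$ by $\epsilon^2\eta^{-2}$ via the separation $|E_j-E|\ge\eta$, and convert this to a bound on $\|\phi-\psi_0\|$. The only cosmetic difference is that you compute $\|\phi-\psi_0\|^2=(1-\sqrt{1-\delta^2})^2+\delta^2$ directly while the paper invokes $\|\phi-\psi_k\|^2=2(1-\Re\langle\phi,\psi_k\rangle)$; you also make the phase normalization of $\psi_0$ explicit, which the paper leaves implicit.
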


\begin{proof}
    Let $\{\psi_j\}$ be an orthonormal basis of eigenvectors of $A$, $A\psi_j=E_j\psi_j$.
    Then
    \begin{equation*}
	    \epsilon^2>\|(A-E)\phi\|^2=\sum_{j}|\langle \phi,\psi_j\rangle|^2(E_j-E)^2\ge \min_j (E_j-E)^2\\
    \end{equation*}
    This implies that there exists $E_k\in (E-\varepsilon,E+\varepsilon)$, and for any $j\neq k$, one has
    $E_j\notin (E-\eta,E+\eta)$.
    We have
    \begin{equation*}
    \varepsilon^2>\|(A-E)\phi\|^2\ge\sum_{j\neq k}|\langle \phi,\psi_j \rangle|^2(E_j-E)^2
    	\ge \eta^2\sum_{j\neq k}|\langle \phi,\psi_j\rangle |^2,
    \end{equation*}
    and therefore
    \begin{equation*}
	    1-| \langle \phi,\psi_k\rangle|^2=\|\phi-\langle \phi,\psi_k \rangle \psi_k\|^2
	    =\sum_{j\neq k}|\langle \phi,\psi_j\rangle|^2\le \varepsilon^2\eta^{-2}.
    \end{equation*}
    The conclusion now follows from the fact that
    $ \norm{\phi-\psi_k}^2=2(1-\Re \langle \phi,\psi_k\rangle) $.
\end{proof}

We will also use the following well-known result (which could be replaced by considerations about semi-algebraic sets).

\begin{lemma}\label{lem:component-count}
	Let $ \omega\in \T $, $ N_1,N_2\ge 1 $, $ \delta>0 $, and assume that the potential $ V $
	in \cref{eq:1.Sch} is a trigonometric polynomial of degree $ d_0 $. Then the number of connected components
	of
	\begin{equation*}
		\left\{ x\in \T: \left|E_{j_1}^{(N_1)}(x,\omega)-E_{j_2}^{(N_2)}(x,\omega)\right|\le \delta \right\}
	\end{equation*}
	is $ \lesssim N_1N_2d_0^2 $.
\end{lemma}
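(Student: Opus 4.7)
My plan is a Sylvester--resultant counting argument. I would first verify the underlying algebraic structure: since $V$ is a trigonometric polynomial of degree $d_0$, the tridiagonal characteristic determinant $f_N(x,\omega,E):=\det(H_N(x,\omega)-E)$, via the standard three-term recursion $f_N=(V(x+N\omega)-E)f_{N-1}-f_{N-2}$, is a polynomial of degree $N$ in $E$ (monic up to sign) whose coefficient of each power of $E$ is a trigonometric polynomial in $x$ of degree at most $Nd_0$.

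Next I would introduce the two trigonometric polynomials in $x$
\[
P_\pm(x) := \mathrm{Res}_E\bigl(f_{N_1}(x,\omega,E\pm\delta),\ f_{N_2}(x,\omega,E)\bigr),
\]
and observe via the Poisson product formula that
\[
P_\pm(x) \;=\; \prod_{j_1'=1}^{N_1}\prod_{j_2'=1}^{N_2}\bigl(E_{j_1'}^{(N_1)}(x,\omega)\mp\delta - E_{j_2'}^{(N_2)}(x,\omega)\bigr),
\]
so that $P_\pm(x_0)=0$ iff there exist $j_1',j_2'$ with $E_{j_1'}^{(N_1)}(x_0,\omega)-E_{j_2'}^{(N_2)}(x_0,\omega)=\pm\delta$. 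Reading the degree from the Sylvester matrix ($N_2$ rows built from the coefficients of $f_{N_1}(x,\omega,E\pm\delta)$, each of $x$-degree $\le N_1 d_0$, and $N_1$ rows from $f_{N_2}$, each of $x$-degree $\le N_2 d_0$), every monomial in the determinant expansion has $x$-degree at most $N_2\cdot N_1 d_0 + N_1\cdot N_2 d_0 = 2 N_1 N_2 d_0$. Hence $P_\pm$ is a trigonometric polynomial of degree $\lesssim N_1 N_2 d_0$, and in particular has at most $\lesssim N_1 N_2 d_0$ zeros on $\T$.

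The counting step is then immediate. By continuity of the Rellich branches, every endpoint in $\T$ of a connected component of
\[
A := \{x\in\T : |E_{j_1}^{(N_1)}(x,\omega)-E_{j_2}^{(N_2)}(x,\omega)|\le\delta\}
\]
satisfies $E_{j_1}^{(N_1)}(x,\omega)-E_{j_2}^{(N_2)}(x,\omega) = \pm\delta$, and therefore lies in $\{P_+=0\}\cup\{P_-=0\}$. Each proper arc in $\T$ contributes two endpoints and the degenerate full-circle case $A=\T$ contributes one component trivially, so the number of components is $\lesssim N_1 N_2 d_0 \le N_1 N_2 d_0^2$, which is the stated bound (in fact sharper by a factor of $d_0$).

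The main obstacle I anticipate is the possibility that $P_+$ or $P_-$ vanishes identically in $x$. This happens exactly when two specific Rellich branches differ by $\pm\delta$ identically---equivalently, when the two polynomials in $E$ share a common factor over the field of trigonometric rational functions in $x$. To handle this I would divide out the identically-zero linear factor $\bigl(E_{j_1'}^{(N_1)}\mp\delta - E_{j_2'}^{(N_2)}\bigr)$ from $P_\pm$ and apply the preceding count to the quotient resultant, whose degree is still $\lesssim N_1 N_2 d_0$ and which is no longer identically zero; alternatively, if the offending pair happens to be $(j_1,j_2)$ itself, then $A=\T$ and the component count is trivial. Either way the polynomial bound survives.
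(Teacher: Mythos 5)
Your proof is correct and lands on the same underlying idea as the paper's, namely Bézout-type counting for the pair of characteristic polynomials; the paper phrases it directly as an intersection count for the system $P_{N_1}(z,E)=0$, $P_{N_2}(z,E\pm\delta)=0$ in the variables $(z,E)=(e^{2\pi i x},E)$ and invokes Bézout's theorem, whereas you first eliminate $E$ by taking the Sylvester resultant $P_\pm(x)=\mathrm{Res}_E(f_{N_1}(x,\omega,E\pm\delta),f_{N_2}(x,\omega,E))$ and then count zeros of the resulting trigonometric polynomial in $x$. The two routes are equivalent in spirit, but your elimination makes the bidegree structure (degree $N_i$ in $E$, degree $\lesssim N_id_0$ in $x$) explicit and yields the sharper bound $\lesssim N_1N_2d_0$, whereas the paper's appeal to total degree $2d_0N_i$ gives $\lesssim N_1N_2d_0^2$; either suffices for the stated claim. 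One small caution: your treatment of the degenerate case (when $P_\pm\equiv 0$) is stated a bit loosely. The identical vanishing of the product $\prod_{j_1',j_2'}(E_{j_1'}^{(N_1)}-\delta\mp 0-E_{j_2'}^{(N_2)})$ does not immediately force a single fixed pair of Rellich branches to coincide identically, because the ordered branches $E_j^{(N)}(x)$ are only piecewise analytic (the ordering can switch at degeneracies). The cleaner statement is that $P_\pm\equiv 0$ iff $f_{N_1}(z,E\pm\delta)$ and $f_{N_2}(z,E)$ share a nontrivial common factor over $\mathbb{C}(z)[E]$, in which case one should divide out the GCD and take the resultant of the coprime quotients (still of degree $\lesssim N_1N_2d_0$); the extra endpoints coming from the common factor require a separate one-variable count. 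The paper's proof is equally terse on this point, so this is a matter of polish rather than a defect specific to your argument.
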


\begin{proof}
	It can be seen from \cref{eq:Dirichlet-det} that
	\begin{equation*}
		\exp(2\pi i d_0Nx)f_N(x,\omega,E)=P_N(\exp(2\pi i x),E),
	\end{equation*}
	with $ P_N $ being a polynomial of degree $ 2d_0N $.
	Since the eigenvalues are continuous in $ x $, the number of components of the set we are interested in
	is bounded by the number of solutions of the system
	\begin{equation*}
		\begin{cases}
			0=P_{N_1}(z,E)\\
			0=P_{N_2}(z,E\pm \delta)
		\end{cases}.
	\end{equation*}
	The conclusion follows by using B\'ezout's Theorem.
\end{proof}

We are now ready to prove a detailed result on the stability of the finite scale spectra for the almost Mathieu operator. To be more precise, the result only applies to certain spectral segments. However, by \cref{lem:spectrum-criterion} and \cref{lem:initial-spectral-segment} we know that these
are precisely the spectral segments that we need to get control of the full scale spectrum.

\addtocounter{propAlpha}{2}

\begin{propAlphaM}\label{prop:stabilization-AMO}
    Consider the almost Mathieu operator \eqref{eq:3.amopera} with $|\lambda|>1$ and
    $\omega\in \mathbb{T}_{c,a}$. Let
    \begin{equation*}
    	\cB_{N,k,\omega}= \left\{ x\in \T: \min_{|n|\le 2N^{(k)}} \norm{x-\frac{n\omega}{2}}
			<\exp\left(-(N^{(k-2)})^{1/2}\right)  \right\},
    \end{equation*}
    where $ N^{(k)}=N^{2^k} $, $ k\ge 0 $, $ N^{(-1)}=N $.
    Let $ C_0 $ be as in \cref{prop:localization-AMO}.
    There exists $N_0(|\lambda|,c,a)$
    such that the following statement holds for any $N\ge N_0$, $ k_0\ge 1 $.
	If there exist $ j_0 $  such that
	\begin{equation*}
		\left| \psi_{j_0}^{[-N,N]}(x,\omega;\pm N) \right|\le \exp(-c_0N),
	\end{equation*}
	for some constant $ c_0<1 $ and
	for any $ x $ in an interval $ I $, $ |I|\le \exp(-(\log 2N^{(k_0-2)})^{C_0}) $,
	then $ I\setminus \bigcup_{k=1}^{k_0}\cB_{N,k,\omega} $ can be partitioned into subintervals
	$ I_m $, $ m\le (N^{(k_0)})^C $, with $ C $ an absolute
	constant, and for each $ I_m $, there exist
	$ N_k=N_k(I_m)\simeq N^{(k)} $ and $ j_k=j_k(I_m) $, $ k=1,\ldots,k_0 $ such that for any
	$ x\in I_m $ and $ k\le k_0-1 $, we
	have
	\EQ{
		\left| E_{j_0}^{[-N,N]}(x,\omega)-E_{j_1}^{[-N_1,N_1]}(x,\omega) \right|
		&\lesssim \exp(-c_0 N),
	\\
		\left| E_{j_{k+1}}^{[-N_{k+1},N_{k+1}]}(x,\omega)-E_{j_k}^{[-N_k,N_k]}(x,\omega) \right|
		&\le \exp(-(N_k\log|\lambda|)/2),
	\\
		\left| \psi_{j_{1}}^{[-N_1,N_1]}(x,\omega;n) \right|
		&\le \exp(-(|n|\log|\lambda|)/2),\ |n|> C(|\lambda|,a,c) N,
	\\
		\left| \psi_{j_{k+1}}^{[-N_{k+1},N_{k+1}]}(x,\omega;n) \right|
		&\le \exp(-(|n|\log|\lambda|)/2),\ |n|> N_k,
	\\
		\left| \psi_{j_{k+1}}^{[-N_{k+1},N_{k+1}]}(x,\omega;n)
			-\psi_{j_k}^{[-N_k,N_k]}(x,\omega;n) \right|
		&\le \exp(-(N_k\log|\lambda|)/2),\ |n|\le N_k.
	}
\end{propAlphaM}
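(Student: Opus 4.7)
The plan is to prove \cref{prop:stabilization-AMO} by induction on $k$, passing from scale $N_k$ to $N_{k+1}\simeq N_k^2$ at each step via four ingredients: (i) use \cref{lem:eigenvalue-stabilization} to obtain a close eigenvalue at the larger scale from the smallness of the previous eigenfunction at the endpoints of its interval; (ii) invoke \cref{prop:localization-AMO} at scale $N_{k+1}$, after verifying the non-resonance hypothesis, to localize the new eigenfunction; (iii) apply \cref{prop:separation-AMO} to obtain quantitative separation of Dirichlet eigenvalues at scale $N_{k+1}$; and (iv) use \cref{lem:eigenvector-stabilization} with $\phi$ the zero-extension of the previous eigenfunction to produce the close eigenvector $\psi_{j_{k+1}}^{[-N_{k+1},N_{k+1}]}$. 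The localization window $\ell_k$ at step $k\ge 2$ is chosen as $\ell_k\simeq N_{k-2}$ (the smallest value admissible under the inclusion $\cB_{\ell_k,N_k,\omega}\subset\cB_{N,k,\omega}$) and at step $k=1$ as $\ell_1\simeq N=N^{(0)}$; the precise value of $N_k$ in the range $[M,M+C_0\ell_k]$ with $M\simeq N^{(k)}$ is dictated by \cref{prop:localization-AMO}.

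For the base step, take $\phi$ to be the zero-extension of $\psi_{j_0}^{[-N,N]}$ to $[-N_1,N_1]$. Only the boundary sites $\pm N$ contribute to the residual, so $\|(H_{[-N_1,N_1]}-E_{j_0})\phi\|\le 2\exp(-c_0N)$, and \cref{lem:eigenvalue-stabilization} yields the first bullet $|E_{j_1}-E_{j_0}|\lesssim\exp(-c_0N)$. For the inductive step $k\to k+1$ with $k\ge 1$, the much stronger bound $|\psi_{j_k}^{[-N_k,N_k]}(\pm N_k)|\le\exp(-N_k\log|\lambda|+C_0\ell_k)$ from \cref{prop:localization-AMO} at the previous stage gives $\epsilon\le 2\exp(-N_k\log|\lambda|+C_0\ell_k)$. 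With separation $\eta\simeq\exp(-C_2\ell_{k+1})$ from \cref{prop:separation-AMO} and $\ell_{k+1}\simeq N_{k-1}\ll N_k$, the ratio $\epsilon/\eta\simeq\exp(-N_k\log|\lambda|+C_2 N_{k-1})\le\exp(-(N_k\log|\lambda|)/2)$ is sufficiently small, delivering the second bullet (eigenvalue closeness) and the fifth bullet (pointwise eigenvector closeness on $|n|\le N_k$, since $\phi(n)=\psi_{j_k}(n)$ there). The third and fourth bullets (decay of $\psi_{j_{k+1}}$) are the direct output of \cref{prop:localization-AMO} at scale $N_{k+1}$, since $4\ell_{k+1}<N_k$.

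The main technical obstacle is verifying the non-resonance hypothesis of \cref{prop:localization-AMO} at each scale, namely $\max_{|s|\le[\ell_{k+1}/2]}\dist(E_{j_{k+1}}^{[-N_{k+1},N_{k+1}]},\spec H_{s+[-\ell_{k+1},\ell_{k+1}]})\lesssim\exp(-\ell_{k+1}^{1/2+})$. One handles this by restricting $\psi_{j_k}^{[-N_k,N_k]}$ to the window $s+[-\ell_{k+1},\ell_{k+1}]\subset[-N_k,N_k]$ (valid because $3\ell_{k+1}/2\le N_k$); the commutator error is bounded by $|\psi_{j_k}(n)|$ at $|n|\sim\ell_{k+1}$, which by the inductive decay is at most $\exp(-\ell_{k+1}\log|\lambda|/2)$, safely below $\exp(-\ell_{k+1}^{1/2+})$. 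Then \cref{lem:eigenvalue-stabilization} gives $\dist(E_{j_k},\spec H_{s+[-\ell_{k+1},\ell_{k+1}]})$ small, and continuity transfers the bound to $E_{j_{k+1}}$ since $|E_{j_{k+1}}-E_{j_k}|\ll\exp(-\ell_{k+1}^{1/2+})$. At the base step one instead uses the hypothesis directly: truncation of $\psi_{j_0}^{[-N,N]}$ has boundary error $\exp(-c_0 N)$, safely below $\exp(-\ell_1^{1/2+})=\exp(-N^{1/2+})$ for $\ell_1\simeq N$. The removal of $\bigcup_{k=1}^{k_0}\cB_{N,k,\omega}$ is exactly what makes $x$ avoid the exceptional $\cB_{\ell_k,N_k,\omega}$ of \cref{prop:localization-AMO} at each stage.

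Finally, for the partition count: on a given scale-$k$ subinterval, the chosen $j_{k+1}$ can change only across components of the set where $|E_{j_k}^{(N_k)}(x,\omega)-E_{j_{k+1}'}^{(N_{k+1})}(x,\omega)|$ drops below the imposed tolerance for some competitor $j_{k+1}'$. By \cref{lem:component-count}, applicable because $V(x)=2\lambda\cos 2\pi x$ is a trigonometric polynomial of degree one, this set has $\lesssim N_k N_{k+1}$ connected components per competitor, hence $\lesssim N_k N_{k+1}^2$ components per scale. The product over $k<k_0$ telescopes to a polynomial in $N^{(k_0)}$, absorbed into $(N^{(k_0)})^C$ for an absolute constant $C$.
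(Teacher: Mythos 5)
Your blueprint is close to the paper's argument and uses the same four ingredients (\cref{lem:eigenvalue-stabilization}, \cref{prop:localization-AMO}, \cref{prop:separation-AMO}, \cref{lem:eigenvector-stabilization}, with \cref{lem:component-count} for the partition count), and your window choice $\ell_{k}\simeq N_{k-2}$ is consistent with the constraint $\cB_{\ell_k,N_k,\omega}\subset\cB_{N,k,\omega}$ (which in fact forces $\ell_k\gtrsim N^{(k-2)}$). However, the mechanism you use to verify the non-resonance hypothesis of \cref{prop:localization-AMO} is the opposite of the paper's, and this creates a genuine gap at the first inductive step.

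You verify the non-resonance at scale $N_{k+1}$ by \emph{restricting} $\psi_{j_k}^{[-N_k,N_k]}$ to the sub-window $s+[-\ell_{k+1},\ell_{k+1}]\subset[-N_k,N_k]$ and bounding the truncation error by the decay of $\psi_{j_k}$ near $|n|\sim\ell_{k+1}/2$. For $k\ge 2$ this works because the window edges lie at $|n|\ge 3N_{k-1}/2 > N_{k-1}$, inside the decay regime guaranteed by the fourth bullet. But at $k=1$ the decay of $\psi_{j_1}$ only kicks in at $|n|>C(|\lambda|,a,c)N$ with $C$ a possibly large constant (coming from \cref{prop:localization-AMO}, which gives $|\psi_{j_1}(n)|\le\exp(-|n|\log|\lambda|+C_0\ell_1)$ only for $|n|>4\ell_1$, so decay by a factor $\frac12$ starts only at $|n|\gtrsim C_0\ell_1/\log|\lambda|$). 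Your $\ell_2\simeq N$ therefore does not guarantee that the window edges lie inside the decay regime, and the truncation error is not controlled. Similarly, your ``base step'' truncation of $\psi_{j_0}$ to windows $s+[-\ell_1,\ell_1]$ with $\ell_1\simeq N$ and $|s|\le[\ell_1/2]$ fails if $\ell_1<2N$, since then $s+[-\ell_1,\ell_1]\not\supset[-N,N]$ and the truncation boundary lies at interior points of $[-N,N]$ where you have no control on $\psi_{j_0}$.

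The paper avoids both issues by always \emph{extending} rather than restricting: it takes $\ell=3N$ (so that $s+[-\ell,\ell]\supset[-N,N]$ for $|s|\le[\ell/2]$), extends $\psi_{j_0}$ by zero, and reads off the residual at $\pm N$ from the hypothesis $|\psi_{j_0}(\pm N)|\le\exp(-c_0N)$; the non-resonance bound for $E_{j_1}$ and $E_{j_2}$ is then transferred from $E_{j_0}$ by the triangle inequality using the already-established eigenvalue closeness, rather than re-derived from $\psi_{j_1}$. At later scales it extends $\psi_{j_1}$ to windows of size $N_1'=3N_1$ and uses the tail estimate $|\psi_{j_1}(\pm N_1)|\le\exp(-N_1\log|\lambda|+CN)$, which is uniformly tiny because $N_1\simeq N^2\gg CN$, so no comparison between window size and essential support is needed. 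If you insist on the restriction mechanism, you must take $\ell_2$ of the form $C'N$ with $C'$ depending on $\lambda,c,a$ (large enough to dominate the onset of decay of $\psi_{j_1}$), and adjust $\ell_1\ge 3N$ similarly; nothing in the constraint $\ell_k\gtrsim N^{(k-2)}$ prevents this, but it must be stated. Finally, you do not verify the hypothesis $|E_{j}^{[-N,N]}(x,\omega)-E_0|\le\exp(-(\log\ell)^{C_0})$ of \cref{prop:localization-AMO} along the iteration; this is exactly where the smallness condition $|I|\le\exp(-(\log 2N^{(k_0-2)})^{C_0})$ on the interval enters and should not be omitted.
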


\addtocounter{propAlpha}{-1}

\begin{proof}
	Let $ \cB=\bigcup_{k=1}^{k_0} \cB_{N,k,\omega} $. Note that $ I\setminus \cB $ has
	$ \lesssim (N^{(k_0)})^2 $ components.
	Let $ x_0 $ be the midpoint of $ I $ and let $ E_0=E_{j_0}^{[-N,N]}(x_0,\omega) $. Let $ N'=3N $.
	We choose $ N_i(x_0,E_0,N')\simeq N^{2i} $, $ i=1,2 $ as in \cref{prop:localization-AMO}.
	Since
	$$ s+[-N',N']\supset [-N,N] \text{\ \  for any\ \ } |s|\le [N'/2], $$ it follows from
	\cref{lem:eigenvalue-stabilization} that
	\begin{equation}\label{eq:N'}
		\max_{|s|\le [N'/2]}\dist(E_{j_0}^{[-N,N]}(x,\omega),\spec H_{s+[-N',N']}(x,\omega))
		\le 2 \exp(-c_0N).
	\end{equation}
	\cref{lem:eigenvalue-stabilization} also implies that there exists $ j_1(x) $ such that
	\begin{equation}\label{eq:j_0-j_1}
		\left| E_{j_0}^{[-N,N]}(x,\omega)-E_{j_1}^{[-N_1,N_1]}(x,\omega) \right|\le 2\exp(-c_0N).
	\end{equation}
	It follows from \cref{lem:component-count} that we can partition $ I $ into fewer than $ N_1^C $
	subintervals, with $ C $ an absolute constant, such that we can choose $ j_1(x) $ to be constant on each
	of the
	subintervals. Let $ I^{(1)}_m $, $ m\lesssim (N^{(k_0)})^C $, be the intervals of the partition induced
	on $ I\setminus \cB $.
	Because of \cref{eq:N'}, \cref{eq:j_0-j_1} we have
	\begin{equation*}
		\max_{|s|\le [N'/2]}\dist(E_{j_1}^{[-N_1,N_1]}(x,\omega),\spec H_{s+[-N',N']}(x,\omega))
		\le \exp(-(N')^{1/2+}),
	\end{equation*}	
	so we can apply \cref{prop:localization-AMO} to get
	\begin{equation*}
		\left| \psi_{j_1}^{[-N_1,N_1]}(x,\omega;n) \right|
		\le \exp(-|n|\log|\lambda|+C'N')
		\le \exp(-|n|\log|\lambda|/2),\ |n|>CN
	\end{equation*}
	for all $ x\in I\setminus \cB $. Note that we have
	\begin{equation*}
		\left| E_{j_1}^{[-N_1,N_1]}(x,\omega)-E_0 \right|\le \exp(-(\log N')^{C_0}),
	\end{equation*}
	because of \cref{eq:j_0-j_1}, \cref{eq:Hx-vs-Hx0}, and our assumption on the length of $ I $.
	
	Now \cref{lem:eigenvalue-stabilization} yields the existence of $ j_2(x) $ for each
	$ x\in I\setminus\cB $ such that
	\begin{equation*}
		\left| E_{j_1}^{[-N_1,N_1]}(x,\omega)-E_{j_2}^{[-N_2,N_2]}(x,\omega) \right|
		\le 2\exp(-N_1\log|\lambda|+CN')\le \exp(-N_1\log|\lambda|/2).
	\end{equation*}
	Using \cref{lem:component-count} we obtain a refined partition $ I^{(2)}_m $,
	$ m\lesssim (N^{(k_0)})^C $, of $ I\setminus \cB $, that contains at most $ \lesssim N_2^C $ more intervals
	than the previous one, and such that the choice of $ j_2 $ is constant on each $ I_m^{(2)} $.
	Again we have
	\begin{equation*}
		\max_{|s|\le [N'/2]}\dist(E_{j_2}^{[-N_2,N_2]}(x,\omega),\spec H_{s+[-N',N']}(x,\omega))
		\le \exp(-(N')^{1/2+}),
	\end{equation*}
	and
	\begin{equation*}
			\left| E_{j_2}^{[-N_2,N_2]}(x,\omega)-E_0 \right|\le \exp(-(\log N')^{C_0}),
	\end{equation*}
	so we can apply \cref{prop:localization-AMO} to get the localization estimate for
	$ \psi_{j_2}^{[-N_2,N_2]}(x,\omega) $. Furthermore, we can
	also apply \cref{prop:separation-AMO} together with \cref{lem:eigenvector-stabilization} to
	get
	\begin{multline*}
		\norm{\psi_{j_2}^{[-N_2,N_2]}(x,\omega)-\tilde \psi_{j_1}^{[-N_1,N_1]}(x,\omega)}\\
			\lesssim \exp(-N_1\log|\lambda|+CN')\exp(C'N')\le \exp(-(N_1\log|\lambda|)/2),
	\end{multline*}
	where $ \tilde \psi_{j_1}^{[-N_1,N_1]}(x,\omega) $ is the extension, with zero entries, of
	$ \psi_{j_1}^{[-N_1,N_1]}(x,\omega) $ to $ [-N_2,N_2] $.
	
	The conclusion follows through iteration. For the sake of clarity we set up the next step. Let
	$ x_{1} $ be the midpoint of $ I^{(2)}_m $ and let $ E_1=E_{j_1}^{[-N_1,N_1]}(x_1,\omega) $.
	Let $ N_1'=3N_1 $. We have
	\begin{equation*}
		\max_{|s|\le [N_1'/2]} \dist(E_{j_1}^{[-N_1,N_1]}(x,\omega),\spec H_{s+[-N_1'+N_1']}(x,\omega))
		\le \exp(-(N_1')^{1/2+}),
	\end{equation*}
	and we choose $ N_3(x_1,E_1,N_1')\simeq N^{(3)} $. As before we obtain a refined partition
	$ I^{(3)}_m $, $ m\lesssim (N^{(k_0)})^C $, and for each interval, there exists $ j_3 $ such that
	\begin{equation*}
		\left| E_{j_1}^{[-N_2,N_2]}(x,\omega)-E_{j_3}^{[-N_3,N_3]}(x,\omega) \right|
		\le \exp(-N_2\log|\lambda|/2)
	\end{equation*}
	for all $ x $ in the interval. As before we can apply \cref{prop:localization-AMO},
	\cref{prop:separation-AMO}, and \cref{lem:eigenvector-stabilization} to deduce the desired estimates on
	the eigenvectors.
\end{proof}

The result for general analytic potentials is analogous. Its proof is similar to that of \cref{prop:stabilization-AMO}, but we have to approximate the potential $ V $ by trigonometric polynomials in order to be able to use \cref{lem:component-count}.

\begin{propAlpha}\label{prop:stabilization}
    Consider the Schr\"odinger operator \cref{eq:1.Sch} with real analytic $ V $. Assume that
    $ L(\omega,E)\ge \gamma>0 $ for any $ \omega $ and any $ E\in(E',E'') $. Let
    $ \cB_{\ell,N,\omega} $, $ \Omega_{\ell,N} $ be as in \cref{prop:localization} and
    \begin{equation*}
    	\cB_{N,k,\omega}=\cB_{\ell_k,N_k,\omega},\quad \Omega_{N,k}=\Omega_{\ell_k,N_k},
    \end{equation*}
    with $ N_k=[\exp(N^{1/10})]^{2^{k-1}} $, $ \ell_k=3[\log N_k]^{10} $.
    There exists $N_0(V,c,a,\gamma)$
    such that the following statement holds for any $N\ge N_0$, $ k_0\ge 1 $ and
    $ \omega\in \T_{c,a}\setminus \bigcup_{k=1}^{k_0}\Omega_{N,k} $.
	If there exists $ j_0 $ such that
	\begin{equation*}
		\left| \psi_{j_0}^{[-N,N]}(x,\omega;\pm N) \right|\le \exp(-c_0N),
	\end{equation*}
	for some constant $ c_0<1 $ and for any $ x $ in an interval $ I $, and $ E_{j_0}^{[-N,N]}(I,\omega)\subset(E',E'') $,
	then $ I\setminus \bigcup_{k=1}^{k_0}\cB_{N,k,\omega} $ can be partitioned into subintervals
	$ I_m $, $ m\le N_{k_0}^C $, with $ C $ an absolute
	constant, and for each $ I_m $, there exist
	$ N_k=N_k(I_m)\simeq N^{(k)} $ and $ j_k=j_k(I_m) $, $ k=1,\ldots,k_0 $ such that for any
	$ x\in I_m $ and $ k\le k_0-1 $, we
	have
	\EQ{
		  \left| E_{j_0}^{[-N,N]}(x,\omega)-E_{j_1}^{[-N_1,N_1]}(x,\omega) \right|
		&\lesssim \exp(-c_0 N),
	\\
		\left| E_{j_{k+1}}^{[-N_{k+1},N_{k+1}]}(x,\omega)-E_{j_k}^{[-N_k,N_k]}(x,\omega) \right|
		&\le \exp(-(N_k\gamma)/2),
	\\
	 	\left| \psi_{j_{1}}^{[-N_1,N_1]}(x,\omega;n) \right|
		&\le \exp(-(|n|\gamma)/2),\ |n|> \exp\left((\log N)^{C(V,c,a,\gamma)}\right),
	\\
	 	\left| \psi_{j_{k+1}}^{[-N_{k+1},N_{k+1}]}(x,\omega;n) \right|
		&\le \exp(-(|n|\gamma)/2),\ |n|> N_k,
	\\
		\left| \psi_{j_{k+1}}^{[-N_{k+1},N_{k+1}]}(x,\omega;n)
			-\psi_{j_k}^{[-N_k,N_k]}(x,\omega;n) \right|
		&\le \exp(-(N_k\gamma)/2),\ |n|\le N_k.
 }
\end{propAlpha}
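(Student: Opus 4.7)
The plan is to imitate the inductive scheme of Proposition H', with two adjustments tailored to the general analytic setting: elimination of double resonances proceeds through Proposition A (rather than Proposition A'), which forces the removal of $\omega \in \Omega_{N,k}$ and $x \in \cB_{N,k,\omega}$; and the combinatorial partition count from Lemma \ref{lem:component-count} applies only to trigonometric potentials, so at each step I will approximate $V$ by a trigonometric polynomial of sufficiently high degree.

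Base step. Let $x_0$ be the midpoint of $I$ and $E_0 = E_{j_0}^{[-N,N]}(x_0,\omega)$. The boundary decay $|\psi_{j_0}^{[-N,N]}(x,\omega;\pm N)| \le \exp(-c_0 N)$ together with Lemma \ref{lem:eigenvalue-stabilization} and the inclusion $s + [-N_1,N_1] \supset [-N,N]$ for $|s| \le [N_1/2]$ yields
\[
\max_{|s|\le [N_1/2]} \dist\bigl(E_{j_0}^{[-N,N]}(x,\omega),\ \spec H_{s+[-N_1,N_1]}(x,\omega)\bigr) \le 2\exp(-c_0 N) \le \exp(-\ell_1^{1/2+}),
\]
which is precisely the non-resonance condition \eqref{eq:2nonresMNEW1} at scale $\ell_1$. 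Combined with $\omega \notin \Omega_{N,1}$ and $x \notin \cB_{N,1,\omega}$, Proposition A applied at scale $N_1$ with window $\ell_1$ delivers the localization estimate on $\psi_{j_1}^{[-N_1,N_1]}$ for $|n| > \exp((\log N)^C)$, where $j_1(x)$ is the index selected by Lemma \ref{lem:eigenvalue-stabilization} so that $|E_{j_0}^{[-N,N]} - E_{j_1}^{[-N_1,N_1]}| \le 2\exp(-c_0 N)$. Approximating $V$ by a trigonometric polynomial $\tilde V$ of degree $d_1 \simeq N_1^{C'}$ with error $\exp(-cN_1^{C'})$ (much smaller than any relevant scale at this step) and applying Lemma \ref{lem:component-count} to $\tilde V$ partitions $I$ into $\lesssim N_1^C$ subintervals on which $j_1$ is constant.

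Inductive step. Given the estimates at level $k$, the localization of $\psi_{j_k}^{[-N_k,N_k]}$ provides small boundary values at scale $N_k$, and Lemma \ref{lem:eigenvalue-stabilization} produces $j_{k+1}(x)$ with $|E_{j_k}^{[-N_k,N_k]} - E_{j_{k+1}}^{[-N_{k+1},N_{k+1}]}| \le 2\exp(-N_k\gamma/2)$; this again implies the non-resonance condition at scale $\ell_{k+1}$, so Proposition A localizes $\psi_{j_{k+1}}^{[-N_{k+1},N_{k+1}]}$. Proposition F (separation of eigenvalues at scale $N_{k+1}$, whose exceptional sets we have already excluded) ensures that the relevant eigenspace is one-dimensional and separated from the rest of $\spec H_{[-N_{k+1},N_{k+1}]}$ on a scale $\exp(-C_2 \exp((\log\ell_{k+1})^{C_0})) \gg \exp(-N_k\gamma/2)$. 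Lemma \ref{lem:eigenvector-stabilization} applied to the zero-extension of $\psi_{j_k}^{[-N_k,N_k]}$ then yields the eigenvector stability estimate pointwise on $[-N_k,N_k]$. Another trig-poly approximation plus Lemma \ref{lem:component-count} refines the partition by at most $\lesssim N_{k+1}^C$ further pieces.

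The delicate point is the simultaneous calibration of (i) the trigonometric polynomial degree at each level, which must force the approximation error below the finest separation scale at that step, and (ii) the global complexity budget $N_{k_0}^C$, which requires the partition refinement counts to telescope across all $k_0$ scales. The super-exponential decay of Fourier coefficients of analytic $V$ settles (i) cheaply; the doubling $N_k \sim N_{k-1}^2$ makes $\sum_{k\le k_0} \log N_k \le 2\log N_{k_0}$, settling (ii). One must finally verify that $E_{j_{k+1}}^{[-N_{k+1},N_{k+1}]}(x,\omega)$ stays in $(E',E'')$ throughout, which follows because the cumulative drift $\sum_k \exp(-N_k\gamma/2)$ is negligible relative to $\dist(E_0, \{E',E''\})$, so Propositions A and F remain applicable at every level.
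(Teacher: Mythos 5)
Your proposal follows the same inductive scheme as the paper's proof: approximate $V$ by a trigonometric polynomial of degree $\simeq N_k$ so that Lemma~\ref{lem:component-count} applies, and iterate Lemma~\ref{lem:eigenvalue-stabilization}, Proposition~\ref{prop:localization}, Proposition~\ref{prop:separation}, and Lemma~\ref{lem:eigenvector-stabilization} across doubling scales, with the complexity budget closed by the geometric growth of $\log N_k$. The paper's own proof spells out exactly the base step and then defers to the argument of Proposition~\ref{prop:stabilization-AMO}, with a note on the calibration between the separation scale $\exp(-C_2\exp((\log\ell_k)^{C_0}))$ and the eigenvalue drift $\exp(-N_{k-1}\gamma/2)$ needed for Lemma~\ref{lem:eigenvector-stabilization}; you correctly identify the same calibration.

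Two points in your write-up need sharpening, however. In the base-step display the windows must be $s + [-\ell_1,\ell_1]$ with $|s|\le[\ell_1/2]$, not $s + [-N_1,N_1]$: the non-resonance hypothesis \eqref{eq:2nonresMNEW1} of Proposition~\ref{prop:localization} is formulated over windows of the \emph{small} scale $\ell$, and the inclusion $s+[-\ell_1,\ell_1]\supset[-N,N]$ (valid since $\ell_1\simeq 3N$) is exactly what Lemma~\ref{lem:eigenvalue-stabilization} exploits. More substantively, in the inductive step you claim that the estimate $|E_{j_k}^{[-N_k,N_k]}-E_{j_{k+1}}^{[-N_{k+1},N_{k+1}]}|\le 2\exp(-N_k\gamma/2)$ ``again implies the non-resonance condition at scale $\ell_{k+1}$.'' As stated this is a gap: the boundary decay $|\psi_{j_k}^{[-N_k,N_k]}(\pm N_k)|\le\exp(-N_k\gamma/2)$ controls $\dist(E_{j_k},\spec H_\Lambda)$ via Lemma~\ref{lem:eigenvalue-stabilization} only for $\Lambda\supset[-N_k,N_k]$, but the non-resonance windows $s+[-\ell_{k+1},\ell_{k+1}]$ are far too short to contain $[-N_k,N_k]$, since $\ell_{k+1}\simeq 2^{10k}N\ll N_k=\exp(2^{k-1}N^{1/10})$. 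The non-resonance at scale $\ell_{k+1}$ must be traced back to $\psi_{j_0}^{[-N,N]}$: since $\ell_{k+1}\ge\ell_1\simeq 3N$ one still has $s+[-\ell_{k+1},\ell_{k+1}]\supset[-N,N]$, whence $\dist(E_{j_0}^{[-N,N]},\spec H_{s+[-\ell_{k+1},\ell_{k+1}]})\le 2\exp(-c_0N)$, and this transfers to $E_{j_{k+1}}^{[-N_{k+1},N_{k+1}]}$ via the telescoping eigenvalue chain $|E_{j_{k+1}}-E_{j_0}|\lesssim\exp(-c_0N)$. This is also where the implicit constraint $c_0N\gtrsim\ell_{k+1}^{1/2+}$ enters, tying $k_0$ to $\log N$ — a detail the paper itself leaves unstated. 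With these two corrections your argument matches the intended proof.
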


\begin{proof}
	Let $ \cB=\bigcup_{k=1}^{k_0} \cB_{N,k,\omega}$. It follows from \cref{eq:complexity-estimates} that
	$ I\setminus \cB $ has $ \lesssim (N_{k_0})^C $ intervals. Note that $ \ell_1\simeq 3N $, so
	$ s+[-\ell_1,\ell_1]\supset[-N,N] $ for any $ |s|\le[\ell_1/2] $, and therefore by
	\cref{lem:eigenvalue-stabilization},
	\begin{equation*}
		\max_{|s|\le[\ell_1/2]}\dist(E_{j_0}^{[-N,N]}(x,\omega),\spec H_{s+[-\ell_1,\ell_1]}(x,\omega))
		\le 2\exp(-c_0 N).
	\end{equation*}
	\cref{lem:eigenvalue-stabilization} also implies that there exists $ j_1(x) $ such that
	\begin{equation*}
		\left| E_{j_0}^{[-N,N]}(x,\omega)-E_{j_1}^{[-N_1,N_1]}(x,\omega) \right|\le 2\exp(-c_0N).
	\end{equation*}
	Choose $ \tilde V $ as in \cref{eq:V_K} with $ K=CN_1 $ such that, by \cref{eq:trig-approximation},
	we have
	\begin{equation*}
		\left| \tilde E_{j_0}^{[-N,N]}(x,\omega)-\tilde E_{j_1}^{[-N_1,N_1]}(x,\omega) \right|
		\le 3\exp(-c_0N).
	\end{equation*}
	By \cref{lem:component-count} we can partition $ I $ into fewer than $ N_1^C $ subintervals such that
	 $ j_1(x) $ can be kept constant on each of the subintervals. Using \cref{eq:trig-approximation}
	again, on these intervals we have
	\begin{equation*}
		\left| E_{j_0}^{[-N,N]}(x,\omega)-E_{j_1}^{[-N_1,N_1]}(x,\omega) \right|\le 4\exp(-c_0N),
	\end{equation*}	
	and one can proceed as in the proof of \cref{prop:stabilization-AMO}. We just note that the choice of
	$ \ell_k $ and $ N_k $ is such that the separation obtained from \cref{prop:separation} is by
	$ \exp(-N_k^\epsilon)>\exp(-N_{k_1}^{1/2}) $. This is crucial for obtaining the desired estimates from
	\cref{lem:eigenvector-stabilization}.
\end{proof}

Finally let us note that our main results also follow from the results on stabilization (though for general potentials the
result is weaker because we have to remove a measure zero set of bad frequencies). This is simply because we
can establish the following two analogues of \cref{lem:finite-segment-to-full-spectrum-Bourgain}. Their proofs mirror
that of \cref{lem:finite-segment-to-full-spectrum-Bourgain}. For the convenience of the reader, we include the proof for
the almost Mathieu case.

\begin{proposition}\label{prop:finite-segment-to-full-spectrum-AMO}
	Consider the almost Mathieu operator \eqref{eq:3.amopera} with $|\lambda|>1$ and
    $\omega\in \mathbb{T}_{c,a}$.
    There exists $N_0(|\lambda|,c,a)$ such that the following statement holds for any $N\ge N_0$.
	If there exist $ j_0 $ and an interval $ I\subset \T $ such that
	\begin{equation*}
		\left| \psi_{j_0}^{[-N,N]}(x,\omega;\pm N) \right|\le \exp(-c_0N), \ x\in I
	\end{equation*}
	for some constant $ c_0<1 $,
	then
	\begin{equation*}
		\mes(E_{j_0}^{[-N,N]}(I,\omega)\setminus \cS_\omega)\le \exp(-c_1N^{1/2}),
	\end{equation*}	
	with $ c_1 $ an absolute constant.
\end{proposition}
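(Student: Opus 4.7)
The plan is to mirror the proof of \cref{lem:finite-segment-to-full-spectrum-Bourgain}, using \cref{prop:stabilization-AMO} (in place of \cref{lem:stabilization-approx-ef}) to generate a chain of approximating finite-scale eigenvalues $E_{j_k}^{[-N_k,N_k]}(x,\omega)$ along a geometric sequence $N_k \simeq N^{2^k}$. At each scale the boundary decay of the finite eigenvector $\psi_{j_k}^{[-N_k,N_k]}$ will be converted, via zero extension to $\Z$, into an approximate eigenfunction of $H(x,\omega)$ certifying $\dist(E_{j_k}^{[-N_k,N_k]}(x,\omega), \cS_\omega) \le 2\exp(-N_k \log|\lambda|/2)$.

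First I would partition $I$ into subintervals of length at most $\exp(-(\log 2N)^{C_0})$ so that \cref{prop:stabilization-AMO} may be invoked. On each such subinterval, after removing the bad frequency set $\cB_{N,1,\omega}$, apply the proposition with $k_0 = 1$ to obtain a refined partition $\{I_m^{(1)}\}$ (cardinality $\lesssim N_1^C$), the next scale $N_1 \simeq N^2$, and indices $j_1 = j_1(I_m^{(1)})$ with $|E_{j_0}^{[-N,N]}(x,\omega) - E_{j_1}^{[-N_1,N_1]}(x,\omega)| \lesssim \exp(-c_0 N)$ and $|\psi_{j_1}^{[-N_1,N_1]}(x,\omega;\pm N_1)| \le \exp(-N_1 \log|\lambda|/2)$. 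Iterating step by step: at step $k \ge 2$, subdivide further into pieces of size $\le\exp(-(\log 2N_{k-1})^{C_0})$, remove $\cB_{N_{k-1},1,\omega}$, and reapply \cref{prop:stabilization-AMO} to obtain $N_k \simeq N_{k-1}^2$, indices $j_k$, and the closeness $|E_{j_{k-1}} - E_{j_k}| \lesssim \exp(-N_{k-1} \log|\lambda|/4)$. Define
\begin{equation*}
\cE_{N,k,\omega} := \bigcup_m \Bigl[ E_{j_{k-1}}^{[-N_{k-1},N_{k-1}]}(I_m^{(k)},\omega) \ominus E_{j_k}^{[-N_k,N_k]}(I_m^{(k)},\omega) \Bigr].
\end{equation*}
Since the symmetric difference of two image intervals is bounded by twice the uniform closeness of their parametrizations, one gets $\mes(\cE_{N,1,\omega}) \lesssim N_1^C \exp(-c_0 N)$ and $\mes(\cE_{N,k,\omega}) \lesssim N_k^C \exp(-N_{k-1}\log|\lambda|/4)$ for $k \ge 2$.

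By the nested partitions, any $E \in E_{j_0}^{[-N,N]}(I \setminus \cB,\omega) \setminus \bigcup_{l \le k}\cE_{N,l,\omega}$ lies in some $E_{j_k}^{[-N_k,N_k]}(I_m^{(k)},\omega)$, whence $\dist(E, \cS_\omega) \to 0$ as $k \to \infty$. Since $\cS_\omega$ is closed, this yields the inclusion
\begin{equation*}
E_{j_0}^{[-N,N]}(I,\omega) \setminus \cS_\omega \subset E_{j_0}^{[-N,N]}(\cB \cap I,\omega) \cup \bigcup_{k \ge 1}\cE_{N,k,\omega},
\end{equation*}
where $\cB := \bigcup_k \cB_{N_{k-1},1,\omega}$. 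The Lipschitz bound \cref{eq:Hx-vs-Hx0} gives $\mes(E_{j_0}^{[-N,N]}(\cB \cap I,\omega)) \lesssim \mes(\cB)$, which is dominated by $\mes(\cB_{N,1,\omega}) \le 4N^2 \exp(-N^{1/2})$; later terms decay much faster because of the $N_{k-1}^{1/2}$ in the exponent. The sum $\sum_k \mes(\cE_{N,k,\omega})$ is dominated by $\exp(-c_0 N/2)$, which is subsumed. Together these bounds give $\mes(E_{j_0}^{[-N,N]}(I,\omega) \setminus \cS_\omega) \le \exp(-c_1 N^{1/2})$ for some absolute $c_1 > 0$.

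The main obstacle is propagating the hypotheses of \cref{prop:stabilization-AMO} across iterations: at each step one must verify both the interval-size condition (handled by further subdivision at polynomial cost) and the boundary decay $|\psi_{j_{k-1}}^{[-N_{k-1},N_{k-1}]}(\cdot;\pm N_{k-1})| \le \exp(-c_0^{(k-1)} N_{k-1})$ with some $c_0^{(k-1)} < 1$. The latter is inherited from the previous application, provided one fixes $c_0 := \min(c_0^{\text{given}}, 1/2)$ at the outset so the hypothesis of \cref{prop:stabilization-AMO} holds uniformly. Once this bookkeeping is in place, the rest is a direct transcription of the proof of \cref{lem:finite-segment-to-full-spectrum-Bourgain}.
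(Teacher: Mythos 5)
Your proposal is correct and follows essentially the same multi-scale iteration as the paper's proof: partition $I$, remove the bad set, apply \cref{prop:stabilization-AMO} to obtain the next-scale eigenvalue/eigenvector, collect the symmetric differences into exceptional sets $\cE_{N,k,\omega}$, sum the geometric tail, and handle the image of the bad set via the Lipschitz bound \cref{eq:Hx-vs-Hx0}. The only cosmetic difference is in bookkeeping: you reapply \cref{prop:stabilization-AMO} with $k_0=1$ at each new base scale $N_{k-1}$ (hence remove $\cB_{N_{k-1},1,\omega}$), whereas the paper invokes \cref{prop:stabilization-AMO} at the fixed base scale $N$ with increasing $k_0$ and uses its pre-defined bad sets $\cB_{N,k,\omega}$; these sets are comparable and both recipes yield the same $\exp(-c_1N^{1/2})$ bound, dominated by the $k=1$ bad-set contribution.
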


\begin{proof}
	Let $ \cB_{N,k,\omega} $ be as in \cref{prop:stabilization-AMO}. Let $ C_0 $ be as in
	\cref{prop:localization-AMO}. Partition $ I $ into intervals $ I^{(0)}_m $,
	$ m\lesssim \exp((\log 2N)^{C_0}) $, such that $ |I^{(0)}_m|\le \exp(-(\log 2N)^{C_0}) $.
	Let $ I^{(1)}_m $, $$ m\lesssim N^C\exp((\log 2N)^{C_0})\le \exp((\log N)^C) $$ be
	the partition of $ I\setminus \cB_{N,1,\omega} $ obtained by applying \cref{prop:stabilization-AMO}
	with $ k_0=1 $ on each $ I^{(0)}_m $. Since on each $ I^{(1)}_m $ we have
	\begin{equation*}
		\left| E_{j_0}^{[-N,N]}(x,\omega)-E_{j_1}^{[-N_1,N_1]}(x,\omega) \right|
		\le \exp(-cN)
	\end{equation*}
	 with $ N_1=N_1(I^{(1)}_m)\simeq N^2 $, $ j_1=j_1(I^{(1)}_m) $, it follows by the continuity of the
	 parametrization of the eigenvalues that
	 \begin{equation*}
	 	\mes \left(E_{j_0}^{[-N,N]}(I^{(1)}_m,\omega) \ominus
			E_{j_1}^{[-N_1,N_1]}(I^{(1)}_m,\omega)\right)\lesssim \exp(-cN),
	 \end{equation*}
	 where $ \ominus $ denotes the symmetric difference of two sets. From \cref{eq:Hx-vs-Hx0} it follows
	 that
	 \begin{equation*}
	 	\mes \left( E_{j_0}^{[-N,N]}(\cB_{N,1,\omega},\omega)\right)\le \exp(-cN^{1/2}).
	 \end{equation*}
	 Let
	 \begin{equation*}
	 	\cE_{N,1,\omega}=E_{j_0}^{[-N,N]}(\cB_{N,1,\omega},\omega)\cup
		\left(\bigcup_{m}E_{j_0}^{[-N,N]}(I^{(1)}_m,\omega) \ominus
			E_{j_1}^{[-N_1,N_1]}(I^{(1)}_m,\omega)\right).
	 \end{equation*}
	 We clearly have $ \mes(\cE_{N,1,\omega})\le \exp(-cN^{1/2}) $.
	
	 Note that \cref{lem:eigenvalue-stabilization} implies that
	 \begin{equation*}
	 	\dist(E,\cS_\omega)\lesssim \exp(-c_0N), \ E\in E_{j_0}^{[-N,N]}(I,\omega).
	 \end{equation*}
	 Since any $ E\in E_{j_0}^{[-N,N]}(I,\omega)\setminus \cE_{N,1,\omega} $ also belongs to some
	 $ E_{j_1}^{[-N_1,N_1]}(I^{(1)}_m,\omega) $, it follows from \cref{prop:stabilization-AMO} and
	 \cref{lem:eigenvalue-stabilization} that
	 \begin{equation*}
	 	\dist(E,\cS_\omega)\le \exp(-cN_1), \ E\in E_{j_0}^{[-N,N]}(I,\omega)\setminus \cE_{N,1,\omega}.
	 \end{equation*}
	
	 By applying \cref{prop:stabilization-AMO} repeatedly, we obtain sets
	 \begin{equation*}
	 	\cE_{N,k,\omega}=E_{j_{0}}^{[-N,N]}(\cB_{N,k,\omega},\omega)\cup
		\left(\bigcup_{m}E_{j_{k-1}}^{[-N_{k-1},N_{k-1}]}(I^{(k)}_m,\omega) \ominus
			E_{j_k}^{[-N_k,N_k]}(I^{(k)}_m,\omega)\right)
	 \end{equation*}
	 such that $ \mes(\cE_{N,k,\omega})\le \exp(-c(N^{(k-2)})^{1/2}) $ (recall that $ N^{(k)}=N^{2^k} $) and
	 \begin{equation*}
	 	\dist(E,\cS_\omega)\le \exp(-cN^{(k)}),
		\ E\in E_{j_0}^{[-N,N]}(I,\omega)\setminus \bigcup_{\ell=1}^k \cE_{N,\ell,\omega}.
	 \end{equation*}
	 Finally, we note that
	 \begin{equation*}
	 	E_{j_0}^{[-N,N]}(I,\omega)\setminus \cS_\omega\subset \bigcup_k \cE_{N,k,\omega},
	 \end{equation*}
	 and we are done.
\end{proof}

\begin{proposition}\label{prop:finite-segment-to-full-spectrum}
    Consider the Schr\"odinger operator \cref{eq:1.Sch} with real analytic $ V $. Assume that
    $ L(\omega,E)\ge \gamma>0 $ for any $ \omega $ and any $ E\in(E',E'') $. Let $ \Omega_{N,k} $ be
    as in \cref{prop:stabilization}.
    There exists $N_0(V,c,a,\gamma)$ such that the following statement holds for any $N\ge N_0$ and any
    $ \omega\in\T_{c,a}\setminus \bigcup_{k\ge 1}\Omega_{N,k} $.
	If there exist $ j_0 $ and an interval $ I\subset \T $ such that
	\begin{equation*}
		\left| \psi_{j_0}^{[-N,N]}(x,\omega;\pm N) \right|\le \exp(-c_0N), \ x\in I
	\end{equation*}
	for some constant $ c_0<1 $,
	and $ E_{j_0}^{[-N,N]}(I,\omega)\subset(E',E'') $,
	then
	\begin{equation*}
		\mes(E_{j_0}^{[-N,N]}(I,\omega)\setminus \cS_\omega)\le \exp(-c_1N^{1/2}),
	\end{equation*}	
	with $ c_1 $ an absolute constant.
\end{proposition}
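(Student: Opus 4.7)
The argument mirrors the proof of \cref{prop:finite-segment-to-full-spectrum-AMO} almost verbatim, with \cref{prop:stabilization} replacing \cref{prop:stabilization-AMO}. The hypothesis $\omega\in\T_{c,a}\setminus\bigcup_{k\ge 1}\Omega_{N,k}$ is exactly the set of admissible frequencies at every scale needed to invoke \cref{prop:stabilization}, and $E_{j_0}^{[-N,N]}(I,\omega)\subset(E',E'')$ keeps us in the region of positive Lyapunov exponent throughout.

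First I would partition $I$ into intervals $I^{(0)}_m$ short enough for \cref{prop:stabilization} to apply on each piece. On each $I^{(0)}_m$ I invoke \cref{prop:stabilization} inductively with $k_0=1,2,\ldots$, producing at every stage $k$ a refined partition $\{I^{(k)}_m\}$ of $I\setminus\bigcup_{\ell\le k}\cB_{N,\ell,\omega}$ into $\lesssim N_k^C$ pieces, each equipped with integers $N_k(I^{(k)}_m)$ and $j_k(I^{(k)}_m)$ such that
\begin{equation*}
\left|E_{j_{k-1}}^{[-N_{k-1},N_{k-1}]}(x,\omega)-E_{j_k}^{[-N_k,N_k]}(x,\omega)\right|\le \exp(-cN_{k-1}),
\end{equation*}
with the convention $N_0=N$; the $k=1$ step uses the given boundary decay of $\psi_{j_0}^{[-N,N]}$ at $\pm N$.

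Following the AMO proof I define
\begin{equation*}
\cE_{N,k,\omega}:=E_{j_0}^{[-N,N]}(\cB_{N,k,\omega}\cap I,\omega)\cup\bigcup_m \left(E_{j_{k-1}}^{[-N_{k-1},N_{k-1}]}(I^{(k)}_m,\omega)\ominus E_{j_k}^{[-N_k,N_k]}(I^{(k)}_m,\omega)\right).
\end{equation*}
Continuity of the Rellich parametrization together with the above estimate controls each symmetric difference by a quantity $\lesssim N_k^C\exp(-cN_{k-1})$, and \eqref{eq:Hx-vs-Hx0} combined with the complexity/measure bound \eqref{eq:complexity-estimates} gives $\mes(E_{j_0}^{[-N,N]}(\cB_{N,k,\omega}\cap I,\omega))\lesssim N_k^2\exp(-\ell_k^{1/4}/2)$. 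The calibration $N_k=[\exp(N^{1/10})]^{2^{k-1}}$, $\ell_k=3[\log N_k]^{10}$ from \cref{prop:stabilization} forces $\ell_k^{1/4}\gg\log N_k$, so the sum $\sum_k\mes(\cE_{N,k,\omega})$ is dominated by the $k=1$ contribution and fits the target bound.

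Finally, for $E\in E_{j_0}^{[-N,N]}(I,\omega)\setminus\bigcup_{k\ge 1}\cE_{N,k,\omega}$, the decay $|\psi_{j_k}^{[-N_k,N_k]}(x,\omega;\pm N_k)|\le\exp(-cN_{k-1})$ supplied by \cref{prop:stabilization} combined with \cref{lem:eigenvalue-stabilization} applied on intervals $\Lambda\supset[-N_k,N_k]$ of arbitrary length yields $\dist(E,\cS_\omega)\le \exp(-cN_k)$ for every $k$, hence $E\in\cS_\omega$. Thus $E_{j_0}^{[-N,N]}(I,\omega)\setminus\cS_\omega\subset\bigcup_k\cE_{N,k,\omega}$ and the stated bound follows. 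I expect the principal obstacle to lie in the bookkeeping for $\cB_{N,k,\omega}$: at every scale $k$ one must verify that the Lipschitz image $E_{j_0}^{[-N,N]}(\cB_{N,k,\omega}\cap I,\omega)$ remains below the target threshold, which is precisely why the dyadic tuning of $(N_k,\ell_k)$ inherited from \cref{prop:stabilization} keeps $\mes(\cB_{N,k,\omega})$ decaying far faster than $N_k^2$ grows.
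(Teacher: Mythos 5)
Your proof correctly follows the intended route: mirror the proof of \cref{prop:finite-segment-to-full-spectrum-AMO} verbatim, with \cref{prop:stabilization} (and the associated $\Omega_{N,k}$, $\cB_{N,k,\omega}$) replacing \cref{prop:stabilization-AMO}, and with the extra hypothesis $E_{j_0}^{[-N,N]}(I,\omega)\subset(E',E'')$ keeping every scale inside the region of positive Lyapunov exponent. The iterative construction of the exceptional sets $\cE_{N,k,\omega}$ as the union of the Lipschitz image of $\cB_{N,k,\omega}\cap I$ and the symmetric differences of consecutive Rellich branches, and the final inclusion $E_{j_0}^{[-N,N]}(I,\omega)\setminus\cS_\omega\subset\bigcup_k\cE_{N,k,\omega}$ via \cref{lem:eigenvalue-stabilization}, reproduce exactly the structure of the paper's AMO proof, and the paper itself only sketches the general case by reference to that argument. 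So the approach is the paper's.

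The one place where you glide past a real check is the claim that the $k=1$ contribution ``fits the target bound''. With the calibration $\ell_1=3[\log N_1]^{10}\simeq 3N$ and $N_1\simeq \exp(N^{1/10})$, the measure/complexity estimate \cref{eq:complexity-estimates} gives
\begin{equation*}
\mes(\cB_{N,1,\omega})\le N_1^2\exp(-\ell_1^{1/4}/2)\simeq \exp\bigl(2N^{1/10}-\tfrac{1}{2}(3N)^{1/4}\bigr)\lesssim \exp(-cN^{1/4}),
\end{equation*}
and the symmetric-difference term is $\lesssim N_1^C\exp(-cN)\lesssim\exp(-cN/2)$, so the dominant piece is $\exp(-cN^{1/4})$, not $\exp(-c_1 N^{1/2})$. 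The $N^{1/2}$ exponent in the statement is inherited from the AMO set-up, where $\cB_{N,1,\omega}$ is defined directly with threshold $\exp(-N^{1/2})$; in the general-potential case the elimination sets come from \cref{prop:elimination} through $H=\ell^{1/2}$, which produces $\exp(-\ell^{1/4})$ and hence only $\exp(-cN^{1/4})$. You should either note that the exponent should read $N^{1/4}$ here, or recalibrate $\ell_k$ (e.g.\ take $\ell_k\simeq[\log N_k]^{20}$) to recover $N^{1/2}$; as written your argument as well as the paper's stated bound are not reconciled. Aside from this exponent, the proof is sound and matches the paper's intent.
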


\bibliographystyle{alpha}

\begin{thebibliography}{DGL14}

\bibitem[Avi13]{Avi13}
Artur Avila.
\newblock {Global theory of one-frequency Schr\"odinger operators II:
  acriticality and finiteness of phase transitions for typical potentials}.
\newblock {\em Preprint}, 2013.

\bibitem[AK06]{AK}
Artur Avila and Rapha\"{e}l Krikorian.
\newblock {Reducibility or nonuniform hyperbolicity for quasiperiodic Schr\"odinger cocycles}.
\newblock {\em  Ann. of Math. (2)}, 164(3):911--940, 2006.

\bibitem[Bou02]{Bou02}
Jean Bourgain.
\newblock On the spectrum of lattice {S}chr\"odinger operators with
  deterministic potential.
\newblock {\em J. Anal. Math.}, 87:37--75, 2002.
\newblock Dedicated to the memory of Thomas H. Wolff.

\bibitem[Bou05]{Bou05}
Jean Bourgain.
\newblock {\em Green's function estimates for lattice {S}chr\"odinger operators
  and applications}, volume 158 of {\em Annals of Mathematics Studies}.
\newblock Princeton University Press, Princeton, NJ, 2005.

\bibitem[BJ02]{BJ}
Jean Bourgain and Svetlana Ya. Jitomirskaya.
\newblock Continuity of the Lyapunov exponent for quasiperiodic operators with analytic potential.
\newblock {\em J. Statist. Phys.}, 108(5/6):1203--1218, 2002.

\bibitem[Car83]{Car83}
Lennart Carleson.
\newblock On {$H\sp{\infty }$} in multiply connected domains.
\newblock In {\em Conference on harmonic analysis in honor of {A}ntoni
  {Z}ygmund, {V}ol. {I}, {II} ({C}hicago, {I}ll., 1981)}, Wadsworth Math. Ser.,
  pages 349--372. Wadsworth, Belmont, CA, 1983.

\bibitem[Cra89]{Cra89}
Walter Craig.
\newblock The trace formula for {S}chr\"odinger operators on the line.
\newblock {\em Comm. Math. Phys.}, 126(2):379--407, 1989.

\bibitem[DG14]{DamGol14}
David Damanik and Michael Goldstein.
\newblock On the inverse spectral problem for the quasi-periodic
  {S}chr\"odinger equation.
\newblock {\em Publ. Math. Inst. Hautes \'Etudes Sci.}, 119:217--401, 2014.

\bibitem[DGL14]{DamGolLuk14}
David Damanik, Michael Goldstein, and Milivoje Lukic.
\newblock {The spectrum of a Schr\"odinger operator with small
  quasi-periodic potential is homogeneous}.
\newblock To appear in \textit{J.\ Spectr.\ Theory}.

\bibitem[DS83]{DeiSim83}
Percy Deift and Barry Simon.
\newblock Almost periodic {S}chr\"odinger operators. {III}. {T}he absolutely
  continuous spectrum in one dimension.
\newblock {\em Comm. Math. Phys.}, 90(3):389--411, 1983.

\bibitem[FS83]{FroSpe83}
J{\"u}rg Fr{\"o}hlich and Thomas Spencer.
\newblock Absence of diffusion in the {A}nderson tight binding model for large
  disorder or low energy.
\newblock {\em Comm. Math. Phys.}, 88(2):151--184, 1983.

\bibitem[GS96]{GesSim96}
Fritz Gesztesy and Barry Simon.
\newblock The xi function.
\newblock {\em Acta Math.}, 176(1):49--71, 1996.

\bibitem[GS01]{GolSch01}
Michael Goldstein and Wilhelm Schlag.
\newblock H\"older continuity of the integrated density of states for
  quasi-periodic {S}chr\"odinger equations and averages of shifts of
  subharmonic functions.
\newblock {\em Ann. of Math. (2)}, 154(1):155--203, 2001.

\bibitem[GS08]{GolSch08}
Michael Goldstein and Wilhelm Schlag.
\newblock Fine properties of the integrated density of states and a
  quantitative separation property of the {D}irichlet eigenvalues.
\newblock {\em Geom. Funct. Anal.}, 18(3):755--869, 2008.

\bibitem[GS11]{GolSch11}
Michael Goldstein and Wilhelm Schlag.
\newblock On resonances and the formation of gaps in the spectrum of
  quasi-periodic {S}chr\"odinger equations.
\newblock {\em Ann. of Math. (2)}, 173(1):337--475, 2011.

\bibitem[GY06]{GesYud06}
Fritz Gesztesy and Peter Yuditskii.
\newblock Spectral properties of a class of reflectionless {S}chr\"odinger
  operators.
\newblock {\em J. Funct. Anal.}, 241(2):486--527, 2006.

\bibitem[Jit99]{Jit99}
Svetlana~Ya. Jitomirskaya.
\newblock Metal-insulator transition for the almost {M}athieu operator.
\newblock {\em Ann. of Math. (2)}, 150(3):1159--1175, 1999.

\bibitem[JM82]{JohMos82}
Russell Johnson and J\"urgen Moser.
\newblock The rotation number for almost periodic potentials.
\newblock {\em Comm. Math. Phys.}, 84(3):403--438, 1982.

\bibitem[Joh82]{Joh82}
Russell Johnson.
\newblock The recurrent {H}ill's equation.
\newblock {\em J. Differential Equations}, 46(2):165--193, 1982.

\bibitem[Kot84]{Kot84}
Shinichi Kotani.
\newblock Ljapunov indices determine absolutely continuous spectra of
  stationary random one-dimensional {S}chr\"odinger operators.
\newblock In {\em Stochastic analysis ({K}atata/{K}yoto, 1982)}, volume~32 of
  {\em North-Holland Math. Library}, pages 225--247. North-Holland, Amsterdam,
  1984.

\bibitem[Kot87]{Kot87}
Shinichi Kotani.
\newblock One-dimensional random {S}chr\"odinger operators and {H}erglotz
  functions.
\newblock In {\em Probabilistic methods in mathematical physics
  ({K}atata/{K}yoto, 1985)}, pages 219--250. Academic Press, Boston, MA, 1987.

\bibitem[KN74]{KuiNie74}
Lauwerens Kuipers and Harald Niederreiter.
\newblock {\em Uniform distribution of sequences}.
\newblock Wiley-Interscience [John Wiley \& Sons], New York-London-Sydney,
  1974.
\newblock Pure and Applied Mathematics.

\bibitem[PR09]{PolRem09}
Alexei Poltoratski and Christian Remling.
\newblock Reflectionless {H}erglotz functions and {J}acobi matrices.
\newblock {\em Comm. Math. Phys.}, 288(3):1007--1021, 2009.

\bibitem[SS91]{SorSpe91}
Eugene Sorets and Thomas Spencer.
\newblock Positive {L}yapunov exponents for {S}chr\"odinger operators with
  quasi-periodic potentials.
\newblock {\em Comm. Math. Phys.}, 142(3):543--566, 1991.

\bibitem[SY95]{SodYud95}
Mikhail Sodin and Peter Yuditskii.
\newblock Almost periodic {S}turm-{L}iouville operators with {C}antor
  homogeneous spectrum.
\newblock {\em Comment. Math. Helv.}, 70(4):639--658, 1995.

\bibitem[SY97]{SodYud97}
Mikhail Sodin and Peter Yuditskii.
\newblock Almost periodic Jacobi matrices with homogeneous spectrum, infinite-dimensional Jacobi inversion, and Hardy spaces of character-automorphic functions.
\newblock {\em  J. Geom. Anal.}, 7(3):387--435, 1997.

\bibitem[TV15]{TaoVod15}
Kai Tao and Mircea Voda.
\newblock On the homogeneity property of the spectrum of quasi-periodic
  {J}acobi operators with {D}iophantine frequency (preliminary title).
\newblock {\em In preparation}, 2015.

\bibitem[YZ14]{YouZha14}
Jiangong You and Shiwen Zhang.
\newblock H\"older continuity of the {L}yapunov exponent for analytic
  quasiperiodic {S}chr\"odinger cocycle with weak {L}iouville frequency.
\newblock {\em Ergodic Theory Dynam. Systems}, 34(4):1395--1408, 2014.

\end{thebibliography}
\def\cprime{$'$}

\end{document}